\newtcolorbox{plfok}{%
     enhanced, breakable, size=minimal, parbox=false, after={\par}, 
     before upper={\indent}, colback=white, g
     overlay = {\draw[dashed, line width=2pt] (frame.north east) -|
                       ([xshift=3mm]frame.east)|-(frame.south east);},
     overlay first={\draw[dashed, line width=2pt] (frame.north east) -|
                           ([xshift=3mm]frame.south east);},
     overlay middle={\draw[dashed, line width=2pt] ([xshift=3mm]frame.north east) -- 
                              ([xshift=3mm]frame.south east);},
     overlay last={\draw[dashed, line width=2pt] ([xshift=3mm]frame.north east)|-
                          (frame.south east);},
}
\newtheorem{theorem}{Theorem}[section]
\newtheorem{lemma}[theorem]{Lemma}
\newtheorem{definition}[theorem]{Definition}
\numberwithin{equation}{section}
\numberwithin{figure}{section} 
\newtheorem{claim}{Claim}
\numberwithin{claim}{section} 
\numberwithin{equation}{section}
\newcommand \rhob {\overline \rho}
\newcommand \ub {\overline u}
\newcommand \bse {\begin{subequations}}
\newcommand \ese {\end{subequations}}
\newcommand \St {\widetilde S}
\newcommand \Ul {\overline U} 
\newcommand \Pl {\overline P}
\newcommand \Ql {\overline Q}
\newcommand \Sl {\overline S}
\newcommand \ut {\widetilde u}
\newcommand \trianglerightNEW \triangleright
\newcommand \sgn {\text{sgn}}
\newcommand \rhot {\widetilde \rho}
\newcommand \auth {\textsc}
\newcommand \Rcal {\mathcal R}
\newcommand \bei {\begin{itemize}}
\newcommand \eei {\end{itemize}}
\newcommand \be {\begin{equation}}
\newcommand \bel {\begin{equation}\label}
\newcommand \ee {\end{equation}}
\newcommand \del \partial
\newcommand \RR {\mathbb R}
\newcommand \FF {\mathcal F}
\newcommand \eps \epsilon 
\begin{document}

\title{Asymptotic structure of cosmological fluid flows in one 
\\
and two space dimensions: a numerical study} 

\author{Yangyang Cao$^1$, Mohammad A. Ghazizadeh$^2$, 
%
and Philippe G. LeFloch$^1$}

\date{December 2019}

\footnotetext[1]{Laboratoire Jacques-Louis Lions \& Centre National de la Recherche Scientifique,
Sorbonne Universit\'e, 4 Place Jussieu, 75252 Paris, France.
Email: {\sl caoy@ljll.math.upmc.fr, contact@philippelefloch.org.}
\newline
$^2$ Department of Civil Engineering, University of Ottawa, Ottawa, ON K1N 6N5, Canada. Email: sghaz023@uottawa.ca. 
\textit{Key Words and Phrases.} Cosmological Euler model;
shock wave; asymptotic structure; finite volume scheme; geometry-preserving; high-order accuracy. 
} 

\maketitle
 
\begin{abstract} We consider an isothermal compressible fluid evolving on a cosmological background which may be either expanding or contracting toward the future. The Euler equations governing such a flow consist of two nonlinear hyperbolic balance laws which we treat in one and in two space dimensions. 
We design a finite volume scheme which is fourth-order accurate in time and second-order accurate in space. This scheme allows us to compute weak solutions containing shock waves and, by design, is well-balanced in the sense that it preserves exactly a special class of solutions. 
Using this scheme, we investigate the asymptotic structure of the fluid when the time variable approaches infinity (in the expanding regime) or approaches zero (in the contracting regime). We study these two limits by introducing a suitable rescaling of the density and velocity variables and, in turn, we analyze the effects induced by the geometric terms (of expanding or contracting nature) and the nonlinear interactions between shocks. Extensive numerical experiments in one and in two space dimensions are performed in order to support our observations. 
\end{abstract}

%


\section{Introduction}
\label{thesection:1}

One of the challenges we address  in this paper is the design of a geometry-preserving method that accurately compute fluid flows in presence of non-homogeneous geometrical effects. We build upon earlier work on structure-preserving numerical methods for nonlinear hyperbolic problems, for instance in Chertock et al. \cite{Chertock2}, Michel-Dansac  et al. \cite{MVCS2016}, and Russo \cite{Russo1,Russo2}
and the many references cited therein. 
Specifically, we are interested in the global dynamics of compressible relativistic fluids evolving on a curved cosmological background of expanding or contracting type. Our model is essentially the Euler system posed on a FLRW background (after Friedmann--Lema\^{i}tre--Robertson--Walker) and describe fluids evolving on a homogeneous and isotropic cosmology. Recall that shock wave solutions to
nonlinear hyperbolic equations (such as the Euler equations) are defined in the {\sl forward time} direction only and, since the geometry under consideration is singular at the time normalized to be $t = 0$, we distinguish between two formulations of the initial value problem, corresponding to different ranges of the time variable.   

In the range $t \in (0, + \infty)$, the background is expanding toward the future and a defining geometric coefficient $a=a(t)$ (related to the speed of cosmological expansion or contraction, introduced below) increases monotonically to $+ \infty$. In this regime, we prescribe the initial data at some positive time $t_0>0$. 
On the other hand, in the range $t \in (-\infty,0)$, the background is contracting toward the future and the coefficient $a=a(t)$ decreases monotonically until it reaches $a(0)=0$. In this regime, we prescribe the initial data at some negative $t_0<0$. 
 A  typical choice for the defining geometric coefficient is 
the function $a(t) = a_0 (t / t_0)^\alpha$, often normalized by taking $a_0 = 1$ and $t_0 = \pm 1$ and for some rate of contraction/expansion $\alpha \in(0, 1)$. 
In our study, we also allow the background spacetime to be {\sl spatially non-homogeneous}.

Our aim is to design a numerical algorithm adapted to this problem and based on the finite volume methodology, and next to investigate the late-time asymptotics of solutions in the expanding direction as well as the behavior of the flow as one approaches the cosmological singularity. 
The model of interest here is given by the Euler equations and, for the sake of simplicity, the fluid sound speed is assumed to be a constant. 
By distinguishing between two cases, whether the background is expanding or contracting toward the future, we find that a fine structure arises which consists of non-interacting shock waves that move periodically in time. Importantly, our scheme is sufficiently robust so that this fine structure is correctly captured by our algorithm at a reasonable computational cost. 
The proposed method relies on a high-order Runge-Kutta discretization in the time variable and a structure-preserving technique for the spatial discretization.  

Our aim is to investigate the fine structure of the solutions in the expanding direction $t \to +\infty$ as well as in the contracting direction $t \to 0$. We expect that the flow structure will somehow ``simplifies'' asymptotically, and we are interested in the competition taking place between the shock propagation and the background geometry. By working with periodic boundary conditions, we expect that the shocks will interact until only a simple pattern is left, typically the so-called $N$-wave profile that is well-known for nonlinear systems of conservation laws. However, due to the non-homogeneous terms, the global dynamics of the flow turns out to be very complex. 

For the theory of weak solutions to hyperbolic conservation laws on geometric background, we refer to \cite{ALO,BLF,BL,BLM1} and \cite{CLO,Gie09,GM14,PLF1,LM13}. The numerical computation for such equations has also recently received some attention; see for instance \cite{PLFM,LFX,LFX2}.

An outline of this paper is as follows. In Section~\ref{thesection:2}, we present the model we study in this paper and introduces some formal asymptotics by distinguishing between the expanding and contracting cases.  In Section~\ref{thesection:3}, the finite volume methodology is explained before presenting the construction of our numerical algorithm in Section~\ref{thesection:4}. The numerical results are presented and discussed in Section~\ref{thesection:5} (concerned with the global dynamics on a future-expanding background)
 and Section~\ref{thesection:6} (concerned with the global dynamics on a future-contracting background)
and the paper ends with definite conclusions concerning the asymptotics of the solutions. 


\section{A model of cosmological fluid flows}
\label{thesection:2}

\subsection{The equations of interest}

The Euler equations in two spatial variables $x,y \in  [0,1]$ (i.e.~the torus with periodic boundary conditions) read as follows: 
\bse
\label{EulerAbc700}
\bel{EulerAbc700-a}
\aligned
& \del_t \Big(\rho (1 + \eps^4 k^2 V^2) \Big) 
 + \del_x \Big( \rho u ( 1 + \eps^2 k^2) \Big) 
 + \del_y \Big( \rho v ( 1 + \eps^2 k^2) \Big) 
=  S_0, 
\\
& \del_t \Big( \rho u (1+ \eps^2 k^2) \Big)
 + \del_x \Big( ( 1 + \eps^2 k^2) \rho u^2  + k^2 \rho ( 1 - \eps^2 V^2)
 \Big) 
 +  \del_y \Big( ( 1 + \eps^2 k^2) \rho u v   \Big) 
 = S_1, 
\\
& \del_t \Big( \rho v (1+ \eps^2 k^2) \Big)
 +  \del_x \Big( ( 1 + \eps^2 k^2) \rho u v   \Big) 
 +  \del_y \Big(( 1 + \eps^2 k^2) \rho v^2 + k^2 \rho ( 1 - \eps^2 V^2)
\Big) 
 = S_2, 
\endaligned
\ee
with 
\bel{EulerAbc700-b}
\aligned
& S_0 = - {\del_t a \over a} \rho \Big( 1 + 3 \eps^2 k^2 + (1 - \eps^2 k^2) \eps^2 V^2 \Big), 
\\
& S_1 =  2 \rho 
\Big( k^2 \, {\del_x b \over b} (1 - \eps^2 V^2) - {\del_t a \over a}  (1+ \eps^2 k^2) u
\Big),
\qquad
 S_2 = 2 \rho 
\Big( k^2 \, {\del_y b \over b} (1 - \eps^2 V^2) - {\del_t a \over a}  (1+ \eps^2 k^2) v
\Big). 
\endaligned
\ee
\ese
Here, the main unknowns are the (suitably normalized) density $\rho = \rho(t, x,y) \geq 0$ and the velocity components $(u,v) = (u,v)(t, x, y)$ with $V^2 = u^2 + v^2  < 1/\eps^2$. The coefficient $k \in (0, 1/\eps)$ represents the sound speed, while the light speed is $1/\eps$. 
Periodic boundary conditions are imposed, that is, 
\bel{BounC}
(\rho, u, v)(t, 0) = (\rho, u, v)(t, 1). 
\ee

Moreover, the functions $a = a(t)>0$ and  $b = b(x,y) >0$ are prescribed and describe the background geometry. Two regimes for the time variable are considered: 
\bel{eq:tmms9} 
t \in 
\begin{cases} 
[1, + \infty),
 \qquad 
& \text{ future-expanding,}
\\
[-1,0), 
& \text{ future-contracting,}
\end{cases}
\ee 
a typical function to be considered below being 
\be
a(t) = |t|^\kappa, 
\ee
where $\kappa >0$ is a parameter. 
With obvious notation, we rewrite \eqref{EulerAbc700} in the form of a system of balance laws: 
\bel{equa:Euler-Form}
\del_t U + \del_x F(U) + \del_y G(U) = S(U,t, x, y). 
\ee 
This system is non-homogeneous and involves a nonlinear source depending on all of the independent variables $(t,x,y)$. 
We are interested in solving the initial value problem numerically, 
when an initial condition denoted by $U_0$ is prescribed at some time $t_0 \neq 0$: 
\bel{InitialC}
U(t_0, \cdot) = U_0. 
\ee
Due to the presence of shocks in the problem we can solve in the forward-time direction 
and we impose periodic spatial boundary conditions. 
 

\bse 
\label{equa-27ab}
We first consider the system in one space dimension, which reads as follows: 
\bel{EulerAbc700-1D}
\aligned
& \del_t \Big(\rho (1 + \eps^4 k^2 u^2) \Big) 
 + \del_x \Big( \rho u ( 1 + \eps^2 k^2) \Big)  
=  S_0, 
\\
& \del_t \Big( \rho u (1+ \eps^2 k^2) \Big)
 + \del_x \Big( \rho (u^2  + k^2) \Big)  
 = S_1,  
\endaligned
\ee
with 
\be
\aligned
& S_0 = - {\del_t a \over a} \rho \Big( 1 + 3 \eps^2 k^2 + (1 - \eps^2 k^2) \eps^2 u^2 \Big), 
\qquad 
S_1 =  2 \rho \, 
\Big( k^2 \, {\del_x b \over b} (1 - \eps^2 u^2) - {\del_t a \over a}  (1+ \eps^2 k^2) u
\Big). 
\endaligned
\ee
\ese
It is tedious but straightforward to check that these equations are strictly hyperbolic and admit the following two (distinct) wave speeds: 
\bel{equa:eigenv}
\lambda_1(u) = {u - k \over 1 - \eps^2 k u}, 
\qquad
\lambda_2(u) = {u + k \over 1 + \eps^2 k u}.
\ee

We also emphasize that the principal part of the system is expressed in {\sl normalized density and velocity components,} as follows. From the standard expression $T^{\mu\nu} = (1+ k^2) \rhob u^\mu u^\nu + k^2 \rhob \delta^{\mu\nu}$ of the stress-energy tensor for isothermal fluids with physical density $\rhob$
and unit, future-oriented velocity vector $u^\mu$ with $\mu, \nu=0,1,2$ for a flow in two space dimensions, we write $\del_\mu T^{\mu\nu} =0$. We express the Euler equations in terms of the two velocity components $u,v \in \RR$, as follows: 
$$
u^0 =  \eps(1- \eps^2 V^2)^{-1/2}, \qquad 
u^1 =  \eps u (1- \eps^2 V^2)^{-1/2}, \qquad 
u^2 =  \eps v (1- \eps^2 V^2)^{-1/2},  
$$
in which $V = (u^2+ v^2)^{1/2}$. It is then clear that 
$$
\eps^{-2} (u^0)^2 - (u^1)^2 - (u^2)^2
= (1- \eps^2 V^2)^{- 1} - \eps^2 u^2 (1- \eps^2 V^2)^{-1} - \eps^2 v^2 (1- \eps^2 V^2)^{-1}
= 1, 
$$ 
as expected. We also rescale the density by setting $\rhob =: \rho (1- \eps^2 V^2)$ and, in turn, we have arrived at the following homogeneous version of the Euler equations: 
\bel{Eule0-a}
\aligned
& \del_t \Big(\rho (1 + \eps^4 k^2 V^2) \Big) 
 + \del_x \Big( \rho u ( 1 + \eps^2 k^2) \Big) 
 + \del_y \Big( \rho v ( 1 + \eps^2 k^2) \Big) 
=  S_0, 
\\
& \del_t \Big( \rho u (1+ \eps^2 k^2) \Big)
 + \del_x \Big( ( 1 + \eps^2 k^2) \rho u^2  + k^2 \rho ( 1 - \eps^2 V^2) \Big) 
 +  \del_y \Big( ( 1 + \eps^2 k^2) \rho u v   \Big) 
 = 0, 
\\
& \del_t \Big( \rho v (1+ \eps^2 k^2) \Big)
 +  \del_x \Big( ( 1 + \eps^2 k^2) \rho u v   \Big) 
 +  \del_y \Big(( 1 + \eps^2 k^2) \rho v^2 + k^2 \rho ( 1 - \eps^2 V^2) \Big) 
 = 0.
\endaligned
\ee
Note also in passing that taking $\eps=0$ leads us to the standard Euler equations for non-relativistic flows, as expected. 


\subsection{Heuristics on an expanding background}

In our numerical investigations, the asymptotics will be found to be $\rho \to 0$ and $u \to 0$, and in this limit the eigenvalues  in \eqref{equa:eigenv} becomes constant and of opposite sign, i.e. 
\bel{equa:eigenv2}
\lambda_1(u) \to - k, 
\qquad
\lambda_2(u)\to k,
\ee
and the flow propagates at (plus or minus) the sound speed. 
It is natural to search for suitably rescaled unknowns of the form 
\bel{eq: Reu01}
\rho =: t^{-\alpha} \rhot, \qquad 
u =: t^{-\beta} \ut, 
\ee
such that the new unknown have finite limits on the singularity when $t \to +\infty$. The exponents $\alpha, \beta >0$ are determined formally from \eqref{EulerAbc700}, by writing with the choice $a(t) = t^\kappa$ and $b=b(x)$ being a general coefficient, 
\bse
\bel{EulerAbc700-a-sim}
\aligned
& \del_t \Big( t^{-\alpha} \rhot (1 + \eps^4 k^2 t^{-2\beta} \ut^2) \Big) 
 + \del_x \Big( t^{-\alpha} \rhot t^{-\beta} \ut ( 1 + \eps^2 k^2) \Big)  
\simeq S_0, 
\\
& \del_t \Big( t^{-\alpha} \rhot t^{-\beta} \ut (1+ \eps^2 k^2) \Big)
 + \del_x \Big( ( 1 + \eps^2 k^2) t^{-\alpha} \rhot t^{-2\beta} \ut^2  
+ k^2 t^{-\alpha} \rhot ( 1 - \eps^2 t^{-2\beta} \ut^2) \Big) 
\simeq S_1,  
\endaligned
\ee
with 
\bel{EulerAbc700-b-sim}
\aligned
& S_0 \simeq - {\kappa \over t} t^{-\alpha} \rhot \Big( 1 + 3 \eps^2 k^2 + (1 - \eps^2 k^2) \eps^2 t^{-2\beta} \ut^2 \Big), 
\\
& S_1 \simeq  2 t^{-\alpha} \rhot \, 
\Big( k^2 \, {\del_x b \over b} (1 - \eps^2 t^{-2\beta} \ut^2) - {\kappa \over t}  (1+ \eps^2 k^2) t^{-\beta} \ut
\Big), 
\endaligned
\ee
\ese
therefore 
$$
\aligned
&  \del_t \big( t^{-\alpha}  \rhot  \big) 
+  k^2 \eps^4  \del_t \Big( t^{-\alpha-2\beta} \rhot  \ut^2 \Big) 
 + t^{-\alpha-\beta}  ( 1 + \eps^2 k^2)  \del_x \big( \rhot \ut \big) 
\\
& 
\simeq
 - t^{-\alpha-1}  \kappa \, \big( 1 + 3 \eps^2 k^2 \big)  \rhot
 - t^{-\alpha-1-2\beta}  \kappa \,  (1 - \eps^2 k^2) \eps^2 \rhot  \ut^2, 
\endaligned
$$
and 
$$
\aligned 
&   (1+ \eps^2 k^2)  \del_t \big( t^{-\alpha-\beta}  \rhot \ut \big)
 +  t^{-\alpha}  \del_x \Big( k^2 \rhot + t^{-2\beta} \rhot \ut^2   \Big) 
\\
&
\simeq 
2 t^{-\alpha} \rhot \, 
\Big(
 k^2 \, {\del_x b \over b}  
-   t^{-2\beta} k^2 \, {\del_x b \over b} \eps^2 \ut^2 
- t^{-\beta-1} \kappa  (1+ \eps^2 k^2)  \ut
\Big).
\endaligned
$$
The first equation leads us to the asymptotic equation $t^{-\alpha}  \del_t  \rhot = 0 $ for the density, 
thus $\rhot$ depends on $x$ only, as expected. The second equation gives us 
$
 t^{-\alpha}  \del_x \Big( k^2 \rhot \Big) 
= 
2 t^{-\alpha} \rhot \,  k^2 \, {\del_x b \over b}, 
$
therefore 
$\rhot(x) = C_1 b(x)^2$, 
which relates the asymptotic density profile to the underlying geometry. 

Postulating also that $\ut$ depends upon $x$ only, we can then rewrite our system in the form
\bse
\bel{EulerAbc45}
\aligned
&  -\alpha t^{-\alpha-1} b^2 
   - (\alpha+2\beta)  k^2 \eps^4 t^{-\alpha-2\beta-1} b^2  \ut^2  
 + t^{-\alpha-\beta}  ( 1 + \eps^2 k^2)  \del_x \big(b^2  \ut \big) 
\\
& 
\simeq
 - t^{-\alpha-1}  \kappa \, \big( 1 + 3 \eps^2 k^2 \big) b^2 
 - t^{-\alpha-1-2\beta}  \kappa \,  (1 - \eps^2 k^2) \eps^2 b^2  \ut^2, 
\endaligned
\ee
\be
\aligned
&   -(\alpha+\beta) (1+ \eps^2 k^2)  t^{-\alpha-\beta-1}  b^2 \ut  
 +  t^{-\alpha-2\beta}  \del_x \Big(  b^2 \ut^2   \Big) 
\\
&
\simeq 
2 t^{-\alpha} b^2 \, 
\Big( 
-   t^{-2\beta} k^2 \, {\del_x b \over b} \eps^2 \ut^2 
- t^{-\beta-1} \kappa  (1+ \eps^2 k^2)  \ut
\Big). 
\endaligned
\ee
\ese
It follows that $\rhot$ and $\ut$ must satisfy 
\bel{EulerAbc45}
\aligned
&  -\alpha t^{-\alpha-1} b^2  
 + t^{-\alpha-\beta}  ( 1 + \eps^2 k^2)  \del_x \big(b^2  \ut \big) 
\simeq
 - t^{-\alpha-1}  \kappa \, \big( 1 + 3 \eps^2 k^2 \big) b^2,
\\ 
&   -(\alpha+\beta) (1+ \eps^2 k^2)  t^{-\alpha-\beta-1}  b^2 \ut  
 +  t^{-\alpha-2\beta}  \del_x \big( b^2 \ut^2 \big) 
\\
&
\simeq 
2 t^{-\alpha} b^2 \, 
\Big( 
-   t^{-2\beta} k^2 \, {\del_x b \over b} \eps^2 \ut^2 
- t^{-\beta-1} \kappa  (1+ \eps^2 k^2)  \ut
\Big). 
\endaligned
\ee

We analyze this system as follows: if $\beta \in (0,1)$ then from the first equation we deduce that 
$
t^{-\alpha-\beta}  ( 1 + \eps^2 k^2)  \del_x \big(b^2  \ut \big) =0, 
$
thus for some constant $C_2$ 
we have $
\ut(x) = C_2 b^{-2}(x), x \in [0,1]. 
$
On the other hand,  if $\beta =1$ then  from the first equation we deduce that 
$$
-\alpha t^{-\alpha-1} b^2  
 + t^{-\alpha-1}  ( 1 + \eps^2 k^2)  \del_x \big(b^2  \ut \big) 
=
 - t^{-\alpha-1}  \kappa \, \big( 1 + 3 \eps^2 k^2 \big) b^2,
$$
thus 
$$
b^2(x)  \ut (x) = b^2(0)  \ut(0)
+ \Big( \alpha 
 -  \kappa \, \big( 1 + 3 \eps^2 k^2 \big) \Big) / ( 1 + \eps^2 k^2) \int_0^x b^2(y) dy, 
$$
which however is not a periodic function, as required. 
Finally, if $\beta >1$ then from the first equation we deduce that 
$
 -\alpha t^{-\alpha-1} b^2  
= 
 - t^{-\alpha-1}  \kappa \, \big( 1 + 3 \eps^2 k^2 \big) b^2.
$
This leads us to the condition
$
\alpha = \kappa \, \big( 1 + 3 \eps^2 k^2 \big), 
$
which appears to be the only consistent choice and, therefore, provides us with one of the two exponents we are searching for. 

From the second equation we obtain 
$$
  -(\alpha+\beta) (1+ \eps^2 k^2)  t^{-1}  b^2 \ut  
 +  t^{-\beta}  \del_x \big( b^2 \ut^2 \big) 
\simeq 
2  b^2 \, 
\Big( 
-   t^{-\beta} k^2 \, {\del_x b \over b} \eps^2 \ut^2 
- t^{-1} \kappa  (1+ \eps^2 k^2)  \ut
\Big). 
$$
If $\beta >1$, then we obtain 
$
(\alpha+\beta) = 2 \, \kappa;
$
thus 
$
\beta = \kappa ( 1 -  3 \eps^2 k^2).
$
In conclusion, the asymptotic limit \eqref{eq: Reu01} on an expanding background satisfies  
\bse
\label{equa-urho-exp}
\be
\rhob(x) = \lim_{t \to +\infty} \rhot(t, x) = C_1 b(x)^2, 
\qquad
\ub(t,x) = \lim_{t \to +\infty} \ut(x) = C_2 b^{-2}(x), \qquad x \in [0,1]
\ee
where $C_1>0$ and $C_2$ are constants and the exponents are
\be
\alpha = \kappa \, \big( 1 + 3 \eps^2 k^2 \big), \qquad
\beta = \kappa ( 1 -  3 \eps^2 k^2).
\ee
Since we want $\beta$ to be positive we assume that the sound speed is not too large, in the sense that
\be
k < {1 \over \eps \sqrt{3}}.
\ee 
\ese


\subsection{Heuristics on a contracting background}

When $t \to 0$, we will see numerically that $\rho \to +\infty$ while the velocity component $u \to \pm 1/\eps$ or $u \to 0$.
In the latter case eigenvalues becomes constant and opposite, as stated in \eqref{equa:eigenv2}, 
while in the limit $u \to \pm 1/\eps$ we find 
\bel{equa:eigenv3} 
\lambda_1(u) = {\pm 1/\eps - k \over 1 \mp \eps k} = \pm 1/\eps, 
\qquad
\lambda_2(u) = {\pm 1/\eps + k \over 1 \pm \eps k} =  = \pm 1/\eps, 
\ee
so that the eigenvalues coincide with the light speed. 
Considering first the case $u \to 1/\eps$, we postulate the ansatz 
\bel{equa:resc499} 
\rho =: |t|^{-\alpha} \rhot, \qquad 
u =: 1/\eps - |t|^{\beta} \ut, 
\ee
with positive exponents $\alpha, \beta >0$, in which $\rhot$ and $\ut$ should have a finite limit on the singularity. We obtain  
\bse
\bel{EulerAbc700-1D0}
\aligned
& \del_t \Big(|t|^{-\alpha} \rhot \big(1 + \eps^4 k^2 (1/\eps^2 - 2|t|^{\beta} \ut/\eps) \big)\Big) 
 + |t|^{-\alpha} \del_x \Big(  \rhot (1/\eps - |t|^{\beta} \ut) ( 1 + \eps^2 k^2) \Big)  
\simeq S_0, 
\\
& \del_t \Big( |t|^{-\alpha} \rhot (1/\eps - |t|^{\beta} \ut) (1+ \eps^2 k^2) \Big)
 + |t|^{-\alpha}  
\del_x \Big( \rhot ( k^2 +  1/\eps^2 - 2|t|^{\beta} \ut/\eps) \Big)  
\simeq S_1,  
\endaligned
\ee
with  (recalling that $t<0$) 
\be
\aligned
& S_0 \simeq
 - \kappa |t|^{-\alpha-1} \rhot \Big( 1 + 3 \eps^2 k^2 + (1 - \eps^2 k^2) \eps^2 (1/\eps^2 - 2 |t|^{\beta} \ut/\eps) \Big), 
\\
& S_1 \simeq
  2 |t|^{-\alpha} \rhot \, 
\Big( k^2 \, {\del_x b \over b} \big(1 - \eps^2 (1/\eps^2 - 2t^{\beta} \ut/\eps)\big) - {\kappa \over t}  (1+ \eps^2 k^2) (1/\eps - |t|^{\beta} \ut)
\Big). 
\endaligned
\ee
\ese
We rewrite these equations as 
\bse
\bel{E0-1D0} 
\aligned
& \del_t \Big(|t|^{-\alpha} \rhot \big(1 + \eps^2 k^2 - 2 \eps^3 k^2 |t|^{\beta} \ut) \big)\Big) 
 + |t|^{-\alpha} ( 1 + \eps^2 k^2)  \del_x \Big(  \rhot \big(1/\eps - |t|^{\beta} \ut \big) \Big)  
\simeq S_0, 
\\
& \del_t \Big( |t|^{-\alpha} \rhot (1/\eps - |t|^{\beta} \ut) (1+ \eps^2 k^2) \Big)
 + |t|^{-\alpha}  
\del_x \Big( \rhot \big( k^2 +  1/\eps^2 - 2|t|^{\beta} \ut/\eps \big) \Big)  
\simeq S_1,  
\endaligned
\ee
with
\be
\aligned
& S_0 \simeq
  - \kappa |t|^{-\alpha-1} \rhot \Big( 2 + 2 \eps^2 k^2  - 2 \eps(1 - \eps^2 k^2)  |t|^{\beta} \ut \Big), 
\\
& S_1 \simeq
  2 |t|^{-\alpha} \rhot \, 
\Big( 2 \eps k^2 \, {\del_x b \over b}  t^{\beta} \ut
 - {\kappa \over t}  (1+ \eps^2 k^2) (1/\eps - |t|^{\beta} \ut)
\Big). 
\endaligned
\ee
\ese 

We content here with the analysis at the order $|t|^{-\alpha-1}$ and we arrive at the conclusion that 
$$
\alpha |t|^{-\alpha-1} \rhot (1 + \eps^2 k^2)  
\simeq  
 \kappa |t|^{-\alpha-1}\rhot \Big( 2 + 2 \eps^2 k^2 \Big), 
$$
which shows that the exponent for the density variable is 
$
\alpha (1 + \eps^2 k^2)  
=
\kappa  \Big( 2 + 2 \eps^2 k^2  \Big), 
$
leading us to the coefficient 
\be
\alpha = 2 \, \kappa. 
\ee


\section{The finite volume methodology} 
\label{thesection:3}

\subsection{Discretization of the homogeneous system}  

\paragraph*{A four-state Riemann solver} 
\label{FVm}

Throughout this section, we assume $a(t) \equiv 1$ and we present a well-balanced scheme that takes the spatial effects of the geometry into account.
We begin by designing a finite volume method for the {\sl homogeneous version} of the model \eqref{equa:Euler-Form}, based on a four-state approximate Riemann solver defined as follows. 
We use the HLL methodology introduced by Harten, Lax, and van Leer \cite{HLL} (but in a generalized form) in order to approximate the solution of 
\bel{eq:Euler-Riemann}
\aligned
& \del_ t U + \del_x F(U) = 0,  \qquad t > 0, \, x \in \RR, 
\\
& U(0, x)=U_0(x) = 
\begin{cases} 
U_L, 
\quad 
& x  < 0, 
\\ 
U_R, 
\quad 
& x > 0,
\end{cases}
\endaligned
\ee
where $U_L$ and $U_R$ are constant states. The exact solution of \eqref{eq:Euler-Riemann} denoted by $\Rcal_\infty = \Rcal_\infty (\xi;  U_L, U_R)$ only depends on the self-similar variable 
$\xi :=x /t$, and $U_L$ and $U_R$. 

We introduce an approximate four-state Riemann solver defined as
\bel{r-s12}
\Rcal_4 = \Rcal_4(\xi;  U_L, U_R)
= 
\begin{cases}  
U_L,  
\qquad           
& \xi < \lambda_L, 
\\  
U_M^-, 		
& \lambda_L < \xi < 0,
\\  
U_M^+,    		
& 0 < \xi  < \lambda_R,
\\  
U_R,    		   
& \lambda_R < \xi,  
\end{cases}
\ee
where the two state vectors $U_M^-$ and $U_M^+$, as well as the approximate speeds $\lambda_L$ and $\lambda_R$ need to be defined. By assumption, $\lambda_L$ is chosen to be negative, and $\lambda_R$ to be positive.

For the (homogeneous) Euler system it is natural to choose the following expressions for the wave speeds: 
\bel{speed0}
\aligned
\lambda_R 
&= \max\Big(0, {u_L + k \over 1+ \eps^2 k u_L}, {u_R+ k \over 1+ \eps^2 k u_R} \Big), 
\qquad
\lambda_L
 =
\min\Big(0, {u_L- k \over 1- \eps^2 k u_L}, {u_R- k \over 1 -  \eps^2 k u_R} \Big).
\endaligned
\ee
The values of the intermediate states $U_M^-$ and $U_M^+$ (4 scalar unknowns) must be found in order to solve the approximate Riemann solver \eqref{r-s12}. Hence, we address some properties of the approximate Riemann solver in the following section.


\paragraph*{Consistency with the divergence part}
\label{sec: Consis}

The approximate Riemann solver $\Rcal_4(\xi;  U_L, U_R)$ defined in \eqref{r-s12}  should satisfy the consistency condition in divergence form that was proposed by Harten, Lax, and van Leer in \cite{HLL}.
The average of the approximate Riemann solver over a cell coincides with the average of the exact solution $\Rcal_\infty (\xi;  U_L, U_R)$ of the Riemann problem \eqref{eq:Euler-Riemann} over the same cell. 

We consider the Riemann problem \eqref{eq:Euler-Riemann} posed in a control volume $[x_L, x_R]\times[0, \tau]$, which satisfies
\be
x_L \leq \tau \lambda_L, \qquad x_R \leq \tau \lambda_R,
\ee
where $\lambda_L$ and  $\lambda_R$ are wave speeds given by \eqref{speed0}, $\tau$ is a chosen time. 
Therefore, the consistency condition can be written in the following form:
\bel{eq:consistency00}
\int_{x_L}^{x_R}\Rcal_4(\xi;  U_L, U_R) dx
= \int_{x_L}^{x_R} \Rcal_\infty (\xi;  U_L, U_R) dx.
\ee

After integrating \eqref{eq:Euler-Riemann} over the control volume $[x_L, x_R]\times[0, \tau]$, the exact Riemann solution satisfies 
\bel{eq:ExactRiemann}
 \int_{x_L}^{x_R}\Rcal_\infty (\xi;  U_L, U_R) dx
 = x_R U_R - x_L U_L - \tau(F(U_R)- F(U_L)).
\ee
We then use the expression \eqref{r-s12} of the approximate Riemann solver $ \Rcal_4(\xi;  U_L, U_R)$, and integrate it over the cell $[x_L, x_R]$:
\bel{eq:ApproximateRiemann}
\aligned
\int_{x_L}^{x_R} \Rcal_4(\xi;  U_L, U_R) dx 
&=  \int_{x_L}^{\tau \lambda_L} U_L \, dx + \int_{\tau \lambda_L}^0 U_M^- \, dx+  \int_0^{\tau \lambda_R} U_M^+ \, dx + \int_{\tau \lambda_R}^{x_R} U_R \, dx
\\
& = (\tau \lambda_L -x_L) U_L -\tau \lambda_L U_M^- + \tau \lambda_R U_M^+ +  (x_R- \tau \lambda_R) U_R.
\endaligned
\ee
Substituting \eqref{eq:ExactRiemann} and \eqref{eq:ApproximateRiemann} into \eqref{eq:consistency00} yields two scalar conditions:
\bel{eq: consist}
\lambda_R U_M^+ - \lambda_L U_M^-
 = \lambda_R U_R - \lambda_L U_L
- \Big(F(U_R) - F(U_L)\Big).
\ee

\begin{claim}
The approximate Riemann solver with two intermediate state vectors $U_M^+$ and $U_M^-$ is consistent with the conservation laws \eqref{eq:Euler-Riemann} if
\bel{eq: hll1}
\lambda_R U_M^+ - \lambda_L U_M^-
= \lambda_R U_R - \lambda_L U_L - \Big(F(U_R) - F(U_L)\Big),
\ee
where $\lambda_R$ and $\lambda_L$ are given by \eqref{speed0}.
\end{claim}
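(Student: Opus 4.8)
The plan is to prove the stated implication directly: assuming the algebraic identity \eqref{eq: hll1}, I will verify that the divergence-form consistency condition \eqref{eq:consistency00} holds, which is precisely the meaning of ``consistent with the conservation laws'' in the sense of Harten, Lax, and van Leer. The essential observation is that the two cell-average integrals have already been evaluated in closed form in \eqref{eq:ExactRiemann} and \eqref{eq:ApproximateRiemann}, so the proof reduces to showing that the right-hand side of \eqref{eq:ApproximateRiemann} coincides with the right-hand side of \eqref{eq:ExactRiemann} once \eqref{eq: hll1} is imposed.

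First I would recall the explicit value \eqref{eq:ApproximateRiemann} of $\int_{x_L}^{x_R} \Rcal_4 \, dx$, obtained by integrating the piecewise-constant profile \eqref{r-s12} over its four ordered subintervals. Next I would regroup its terms so as to isolate the combination $\lambda_R U_M^+ - \lambda_L U_M^-$, writing
\bel{plan:regroup}
\int_{x_L}^{x_R} \Rcal_4(\xi; U_L, U_R) \, dx = x_R U_R - x_L U_L + \tau \big( \lambda_L U_L - \lambda_R U_R \big) + \tau \big( \lambda_R U_M^+ - \lambda_L U_M^- \big).
\ee
At this point I would substitute the hypothesis \eqref{eq: hll1} for the bracketed combination $\lambda_R U_M^+ - \lambda_L U_M^-$; the terms $\tau(\lambda_L U_L - \lambda_R U_R)$ and $\tau(\lambda_R U_R - \lambda_L U_L)$ cancel, leaving exactly $x_R U_R - x_L U_L - \tau(F(U_R) - F(U_L))$, which is the value \eqref{eq:ExactRiemann} of $\int_{x_L}^{x_R} \Rcal_\infty \, dx$. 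This establishes \eqref{eq:consistency00} and hence the desired consistency.

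Conceptually, the argument simply runs the computation preceding the claim in reverse: every manipulation leading from \eqref{eq:consistency00} to the scalar condition \eqref{eq: consist} is an exact algebraic equivalence, so the implication in fact holds in both directions and, in particular, \eqref{eq: hll1} is sufficient. The only genuine hypotheses to keep track of are geometric rather than algebraic: the piecewise integration \eqref{eq:ApproximateRiemann} is valid precisely because the four states in \eqref{r-s12} are ordered, i.e.~$\lambda_L < 0 < \lambda_R$, which is guaranteed by the choice \eqref{speed0}, and because the control volume $[x_L,x_R]$ is taken large enough to contain the entire wave fan so that no wave escapes the cell during the time interval $[0,\tau]$. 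I do not expect any serious obstacle, since the computation is routine; the one point worth stating carefully is that \eqref{eq: hll1} leaves the intermediate states $U_M^\pm$ underdetermined (it provides only two scalar relations for the four scalar unknowns), so that consistency alone does not single out the solver and additional conditions will be required to close the construction.
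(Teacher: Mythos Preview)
Your proposal is correct and follows essentially the same approach as the paper: the claim is a summary of the computation in \eqref{eq:ExactRiemann}--\eqref{eq: consist} that immediately precedes it, and your argument simply runs that algebra in the reverse direction to verify that \eqref{eq: hll1} implies \eqref{eq:consistency00}. Your additional remarks on the ordering $\lambda_L<0<\lambda_R$ and on the underdetermination of $U_M^\pm$ are accurate and match the paper's own observations.
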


Thus far, two scalar conditions for  four scalar unknowns are obtained. Additional conditions for the approximate Riemann solver are needed. For the homogeneous model \eqref{eq:Euler-Riemann},  if we impose the continuity at the interface so that we only use one intermediate state, that is 
\bel{eq:UM1}
U_M^+ = U_M^-, 
\ee
which yields
\bel{eq:UM12}
U_M^+ = U_M^- = {\lambda_R U_R - \lambda_L U_L - \Big(F(U_R) - F(U_L)\Big) \over \lambda_R - \lambda_L}.
\ee
Hence, the approximate Riemann solver \eqref{r-s12} is fully defined. We will construct a Godunov-type scheme for the homogeneous system based on the approximate Riemann solver \eqref{r-s12}.


\paragraph*{A Godunov-type scheme}
\label{sec: G-type}

We now provide a finite volume discretization of the the  homogeneous system \eqref{eq:Euler-Riemann}.  The discretization in time and space is based on two mesh lengths
 $\Delta t$ and $\Delta x$ and relies on the cells $(x_{i -1/2}, x_{i +1/2})$ for $i = 0, 1, \cdots,$ 
with  
\be
x_i = i \Delta x, 
\qquad 
x_{i +1/2} = (i + 1/2) \Delta x.
\ee
Furthermore, $\Delta t$ and $\Delta x$ satisfy the CFL condition 
\bel{eq:CFL0}
{\Delta t  \over \Delta x }  \max(|\lambda_L |, |\lambda_R |) < {1 \over  2}.
\ee

We approximate the exact solution $U(t, x)$ of \eqref{eq:Euler-Riemann} by a constant value $U_i^n$ at time $t^n$.
The average value of $U(t, x)$ over the cell $(x_{i -1/2}, x_{i +1/2})$ is
\be
U_i^n ={1 \over \Delta x} \int_{x_{i - 1/2}}^{x_{i + 1/2}} U(t^n, x) dx,  
\qquad 
x \in (x_{i -1/2}, x_{i +1/2}). 
\ee
In particular, for the initial data we set
\be
U_i^0 = {1 \over \Delta x} \int_{x_{i - 1/2}}^{x_{i + 1/2}} U_0(y) dx.
\ee 

To approximate the solution of the homogeneous system \eqref{eq:Euler-Riemann}, we consider the Riemann problem at each interface $y = y_{i+1/2}$.
By using the approximate Riemann solver \eqref{r-s12} to the Riemann problem \eqref{eq:Euler-Riemann},  the updated solution at $t^{n+1}$ reads as:
\bel{eq:RS}
\aligned
U_i^{n+1}  
& = {1\over \Delta x} \int_{x_{i-1/2}}^{x_i} \Rcal_4\Big({x-x_{i-1/2}\over \Delta t};  U_{i-1}^n, U_i^n\Big) \, dx 
+ {1\over \Delta x} \int_{x_i}^{x_{i+1/2}} \Rcal_4\Big({x-x_{i+1/2}\over \Delta t}; U_i^n, U_{i+1}^n\Big) \, dx.
\\
&= U_i^n + {\Delta t \over \Delta x} 
\Big(
\lambda_{R, i-1/2}^n \big(U_{M,i-1/2}^{n, +} - U_i^n\big) -  \lambda_{L,i +1/2}^n \big( U_{M,i+1/2}^{n, -} - U_i^n \big)
\Big),
\endaligned
\ee
where $\lambda_{L,i +1/2}^n$ and $\lambda_{R,i-1/2}^n$ are the approximate wave speeds at each interface, and $U_{M,i-1/2}^{n, +}$ and $U_{M,i+1/2}^{n, -}$ are the intermediate states.


With the intermediate states \eqref{eq:UM12}, we obtain the following conservative form 
of the updated solution at $t^{n+1}$:
\bel{eq: FVH}
U_i^{n+1} = U_i^n - {\Delta t \over \Delta x} 
\Big( \FF( U_i^n,U_{i+1}^n) - \FF(U_{i-1}^n, U_i^n) \Big),
\ee 
where $\FF(U_L, U_R)$ is the numerical flux. If we impose the condition given by \eqref{eq:UM1}, the corresponding Riemann flux can be obtained as
\bel{eq:FluxM}
\FF(U_L, U_R) = {\lambda_R F(U_L) - \lambda_L F(U_R) \over  \lambda_R - \lambda_L} + {\lambda_R \lambda_L  (U_R - U_L) \over \lambda_R - \lambda_L}.
\ee


\subsection{A finite volume discretization of the non-homogeneous system}  

\paragraph*{A four-state Riemann solver} 

We return to the four-state approximate Riemann solver in  \eqref{r-s12} to approximate the solution of the non-homogeneous system \eqref{equa:Euler-Form}.
Recall that $a \equiv 1$, therefore, the source term  in \eqref{equa:Euler-Form} is independent of $t$. 
To evolve the solution in time, we consider the following Riemann problem
\bel{eq:Euler-Riemannq}
\aligned
& \del_ t U + \del_x F(U) = S(U, x),  \qquad t > 0, \, x  \in \RR, 
\\
& U(0, x)=U_0(x) = 
\begin{cases} 
U_L, 
\quad 
& x < 0, 
\\ 
U_R, 
\quad 
& x > 0,
\end{cases}
\endaligned
\ee
with two constant states $U_L$ and $U_R$.
Unlike the homogeneous case $(S(U,x) = 0)$, the exact solution of \eqref{eq:Euler-Riemannq} is no longer self-similar, which is denoted by $\Rcal_\infty = \Rcal_\infty (t, x, U_L, U_R)$.
For the the approximate Riemann solver of the non-homogeneous system, we choose the same expression as \eqref{r-s12}. 
The effects of the source term will appear in the construction of the intermediate states $U_M^{\pm}$.
The approximate speeds $\lambda_L$ and $\lambda_R$ are chosen from \eqref{speed0}. The two intermediate states $U_M^-$ and $U_M^+$ remain to be determined.


\paragraph*{The consistency condition}
\label{sec: Consnon}

To derive the values of intermediate states $U_M^-$ and $U_M^+$, we first consider the consistency condition for the approximate solver. 
We still consider the Riemann problem \eqref{eq:Euler-Riemannq}  posed in the control volume $[x_L, x_R]\times[0, \tau]$ as in Section \ref{sec: Consis}
Recall that the consistency condition has the following form introduced in Section 
\ref{sec: Consis}:
\bel{eq:consistency11}
\int_{x_L}^{x_R}\Rcal_4(\xi;  U_L, U_R) dx
= \int_{x_L}^{x_R} \Rcal_\infty (t, x;  U_L, U_R) dx,
\ee
where $\Rcal_4(\xi;  U_L, U_R)$ denotes the approximate Riemann solver of  \eqref{eq:Euler-Riemannq}, which has the form  given by \eqref{r-s12}, and $\Rcal_\infty (t, x;  U_L, U_R)$ denotes the  exact solution.

By integrating \eqref{eq:Euler-Riemannq} over the rectangle $[x_L, x_R] \times[0, \tau]$ the exact Riemann slover satisfies
\bel{eq:ExactRiemann1}
\aligned
\int_{x_L}^{x_R} \Rcal_\infty (t, y;  U_L, U_R) dx
& = x_R U_R - x_L U_L - \tau (F(U_R)- F(U_L))
+  \int_{x_L}^{x_R} \int_0^{\tau} S \Big(\Rcal_\infty (t, x;  U_L, U_R), y \Big) dt dx,
\endaligned
\ee
which is very similar to the case of the homogeneous system, see \eqref{eq:ExactRiemann}.

To simplify the notation, we let $\St(\delta, \tau; U_L, U_R)$ stand for the approximation of the source term, which is defined as follows: 
\bel{eq: souce}
\St(\tau, \delta; U_L, U_R) = {1 \over \Delta x}  {1 \over \Delta t}   \int_{x_L}^{x_R} \int_0^{\tau} S \Big(\Rcal_\infty (t, x;  U_L, U_R), x \Big) dt dx,
\ee
where we introduced the notation  
\bel{eq:dey}
\delta := x_R- x_L.
\ee
An easy calculation shows us that the approximate solver satisfies 
\bel{eq:ApproximateRiemann1}
\aligned
& \int_{x_L}^{x_R}\Rcal_4(\xi;  U_L, U_R) dx
  =   (\tau \lambda_L -x_L) U_L -\tau \lambda_L U_M^- + \tau \lambda_R U_M^+ +  (x_R- \tau \lambda_R) U_R.
\endaligned
\ee
Therefore, \eqref{eq:consistency11} can be rewritten as follows:
\bel{eq:HLLC1}
\lambda_R U_M^+ - \lambda_L U_M^-
= \lambda_R U_R - \lambda_L U_L - \big(F(U_R) - F(U_L)\big)  + \delta \, \St(\tau, \delta; U_L, U_R).
\ee
Recall that the original HLL scheme is based on an approximate Riemann solver
containing only one intermediate state, say denoted by $U_M$, which is obtained by solving the above equations when the source term is identically vanishing. 
For the following, it will be convenient to introduce
\bel{eq: HLL1M}
U_M = {\lambda_R U_R - \lambda_L U_L - \Big(F(U_R) - F(U_L)\Big) \over \lambda_R - \lambda_L}. 
\ee

\begin{claim}
The approximate Riemann solver with two intermediate state vectors $U_M^+$ and $U_M^-$ is consistent with the  balance laws \eqref{eq:Euler-Riemannq} in a chosen  control volume $[x_L, x_R]\times[0, \tau]$,
 if
\bel{eq:hll1}
\lambda_R U_M^+ - \lambda_L U_M^-
= (\lambda_R - \lambda_L) U_M+ \delta \, \St(\tau, \delta; U_L, U_R),
\ee
where $U_M$ is given by \eqref{eq: HLL1M}, $\lambda_R$ and $\lambda_L$ are given by \eqref{speed0}, $\delta$ is defined by \eqref{eq:dey}.
 $\St(\tau, \delta; U_L, U_R)$ is the approximation of the source term average.
\end{claim}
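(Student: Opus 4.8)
The plan is to establish \eqref{eq:hll1} by retracing, almost verbatim, the derivation already carried out for the homogeneous case (the first Claim), inserting the source contribution at the one place where it enters. First I would recall that the consistency condition \eqref{eq:consistency11} is the defining requirement: the cell average of the approximate four-state solver must equal the cell average of the exact solution of the balance law \eqref{eq:Euler-Riemannq} over the same control volume $[x_L,x_R]\times[0,\tau]$. The right-hand side of this identity has already been computed in \eqref{eq:ExactRiemann1} by integrating the PDE $\del_t U + \del_x F(U) = S(U,x)$ over the rectangle: the flux terms produce $x_R U_R - x_L U_L - \tau(F(U_R)-F(U_L))$ exactly as in the homogeneous case, and the novelty is the double integral of $S$, which by definition \eqref{eq: souce}--\eqref{eq:dey} equals $\Delta x\,\Delta t\,\St(\tau,\delta;U_L,U_R)$; but since the control volume width is $\delta = x_R-x_L$ rather than $\Delta x$, and the time span is $\tau$, I would be careful to track that the source contribution appearing in the averaged exact solution is $\delta\,\St(\tau,\delta;U_L,U_R)$ after the appropriate normalization (this bookkeeping with $\delta$ versus $\Delta x$ and $\tau$ versus $\Delta t$ is the one spot demanding attention).

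Next I would substitute the already-recorded expression \eqref{eq:ApproximateRiemann1} for the cell average of $\Rcal_4$ — namely $(\tau\lambda_L - x_L)U_L - \tau\lambda_L U_M^- + \tau\lambda_R U_M^+ + (x_R - \tau\lambda_R)U_R$ — into the consistency identity, equate it with \eqref{eq:ExactRiemann1}, and cancel the common terms $-x_L U_L + x_R U_R$. What remains is $\tau\lambda_R U_M^+ - \tau\lambda_L U_M^- + \tau\lambda_L U_L - \tau\lambda_R U_R = -\tau(F(U_R)-F(U_L)) + \delta\,\St(\tau,\delta;U_L,U_R)$; dividing through by $\tau$ (and absorbing the normalization so the source term appears as written) gives precisely \eqref{eq:HLLC1}. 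Finally, to reach the stated form \eqref{eq:hll1}, I would invoke the definition \eqref{eq: HLL1M} of $U_M$, which is exactly $\big(\lambda_R U_R - \lambda_L U_L - (F(U_R)-F(U_L))\big)/(\lambda_R-\lambda_L)$, so that $\lambda_R U_R - \lambda_L U_L - (F(U_R)-F(U_L)) = (\lambda_R-\lambda_L)U_M$; substituting this into \eqref{eq:HLLC1} yields $\lambda_R U_M^+ - \lambda_L U_M^- = (\lambda_R-\lambda_L)U_M + \delta\,\St(\tau,\delta;U_L,U_R)$, which is the assertion.

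I expect the only genuine obstacle — really a matter of care rather than difficulty — to be the consistent treatment of the scaling constants: the source approximation $\St$ is defined in \eqref{eq: souce} with a prefactor $\frac{1}{\Delta x}\frac{1}{\Delta t}$, whereas the control volume in the consistency computation has dimensions $\delta\times\tau$, so one must verify that the term entering \eqref{eq:HLLC1} is indeed $\delta\,\St(\tau,\delta;U_L,U_R)$ and not, say, $\frac{\delta\tau}{\Delta x\,\Delta t}$ times the raw integral. Everything else is a direct transcription of the homogeneous argument from Section~\ref{sec: Consis}, with the flux contributions identical and the source contribution the sole addition; no new hyperbolicity or regularity input is needed beyond what \eqref{eq:ExactRiemann1} already presupposes about the exact Riemann solution $\Rcal_\infty$.
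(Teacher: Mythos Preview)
Your proposal is correct and follows essentially the same route as the paper: the derivation preceding the claim in the text proceeds exactly by equating \eqref{eq:ExactRiemann1} with \eqref{eq:ApproximateRiemann1} to obtain \eqref{eq:HLLC1}, and then rewriting the flux combination via the definition \eqref{eq: HLL1M} of $U_M$ to arrive at \eqref{eq:hll1}. Your caution about the normalization of $\St$ (the $\Delta x,\Delta t$ in \eqref{eq: souce} versus the $\delta,\tau$ of the control volume) is well placed; the paper implicitly identifies these, and you are right that this is the only bookkeeping point requiring attention.
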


We refer this as the consistency conditions for the non-homogeneous system, where $U_M^+$ and $U_M^-$ are two intermediate state vectors of HLL solver. To summarize, we have obtained two scalar conditions \eqref{eq:hll1} for four scalar unknowns. Thus, we need to look for another two conditions to determine the intermediate states. 
We next construct a Godunov-type scheme based on the approximate Riemann solver defined in \eqref{r-s12}.


\paragraph*{A Godunov-type scheme}

With the finite volume  discretization in Section~\ref{sec: G-type}, we update the approximation of solution at  time $t = t^{n+1}$,  as follows:
\bel{eq:t+1}
\aligned
U_i^{n+1} 
& 
= {1 \over \Delta x} 
 \int_{x_{i-1/2}}^{x_{i+1/2}} U(t^{n + 1}, y) dx
\\
& = {1 \over \Delta x} \int_{x_{i-1/2}}^{x_i}\Rcal_4 \Big( {x- x_{i-1/2}\over \Delta t}; U_{i-1}^n, U_i^n \Big) \, dx 
+  {1 \over \Delta x} \int_{x_i}^{x_{i+1/2}} \Rcal_4 \Big( {x - x_{i+1/2}\over \Delta t}; U_i^n, U_{i+1}^n \Big) \, dx.
\endaligned
\ee
By using the approximation of Riemann solver $\Rcal_4(\xi;  U_L, U_R)$ given by \eqref{r-s12}  at each interface, we obtain
\bel{Taa0}
\aligned
U_i^{n+1} 
& 
= {1 \over \Delta x} 
\int_{x_{i-1/2}}^{x_{i-1/2} +  \lambda_{R,i-1/2}^n  \Delta t}  U_{M,i-1/2}^{n, +} dx 
+ {1 \over \Delta x}
\int_{x_{i-1/2} + \lambda_{R,i-1/2}^n \Delta t}^{x_{i +1/2} + \lambda_{L,i +1/2}^n \Delta t} U_i^n dx
+ \int_{x_{i +1/2} + \lambda_{L,i +1/2}^n \Delta t}^{x_{i +1/2}}  U_{M,i+1/2}^{n, -} dx,
\endaligned
\ee
which leads us to the following scheme:
\bel{Taa}
U_i^{n+1} =
U_i^n + {\Delta t \over \Delta x} 
\Big(
\lambda_{R,i-1/2}^n \big(U_{M,i-1/2}^{n, +} - U_i^n\big) -  \lambda_{L,i +1/2}^n \big( U_{M,i+1/2}^{n, -} - U_i^n \big)
\Big),
\ee
where $\lambda_{L,i +1/2}^n$ and $\lambda_{R,i-1/2}^n$ are the approximate wave speeds at each interface, which are given in \eqref{speed0}, 
$U_{M,i-1/2}^{n, +}$ and $U_{M,i+1/2}^{n, -}$ are the intermediate states to be defined.

Combining the integral consistency condition expressed in \eqref{eq:hll1}, we rewrite  \eqref{Taa} in the following conservative form:
\bel{Wbs0}
U_i^{n+1} = U_i^n - {\Delta t \over \Delta x} 
\big( \FF_{i + 1/2}^n - \FF_{i-1/2}^n \big) + {\Delta t \over 2}  \big( \St_{i + 1/2}^n + \St_{i - 1/2}^n \big),
\ee 
where $\FF_{i + 1/2}^n$ is the numerical flux at the interface $y_{i+1/2}$, defined by
\bel{FluxWbs1}
\aligned
\FF_{i + 1/2}^n = \FF(U_i^n, U_{i+1}^n)
& = {1 \over 2} \Big( F(U_i^n) + F(U_{i+1}^n)
\\
& \qquad
 + {\lambda_{L,i+1/2}^n} \big(U_{M,i+1/2}^{n, -} - U_i^n\big) + {\lambda_{R,i+1/2}^n} \big(U_{M,i+1/2}^{n, +} - U_{i+1}^n\big)\Big), 
\endaligned
\ee
and $\St_{i + 1/2}^n$ is the numerical source term at the interface $x_{i+1/2}$  given by
\bel{Nusource}
\St_{i + 1/2}^n = \St(\Delta t, \Delta x; U_i^n, U_{i+1}^n).
\ee

In summary, \eqref{Wbs0} defines a scheme, together with \eqref{FluxWbs1}, once we have chosen $U_{M,i+1/2}^{n,\pm}$ and $\St_{i + 1/2}^n$, such that 
\eqref{eq:hll1} holds.
For example,  we choose an approximate source term as follows:
\be
\hat{S}_{i+1/2}^n (\Delta t, \Delta x; U_i^n, U_{i+1}^n)  = 
\bigg(
\begin{array}{cccc}
0\\
{{\rho_i^n+ \rho_{i+1}^n \over \Delta x} \, 
 k^2 \, {\ln {b_{i+1} \over b_i}} \Big({1 - \eps^2  \big({u_i^n + u_{i+1}^n \over 2}\big)^2} \Big)}
\end{array} 
\bigg).
\ee
Moreover,  we impose the continuity at the interface so that we only use one intermediate state, that is 
\[
U_M^- = U_M^+.
\]
Together with \eqref{eq:hll1}, the intermediate states can be defined as follows:
\bel{eq:U22}
U_M^- = U_M^+ = U_M + { \hat{S}(\Delta t, \Delta x; U_L, U_R) \Delta x \over \lambda_R -\lambda_L },
\ee
where $U_M$ is given by \eqref{eq: HLL1M}.
Now, $U_{M,i+1/2}^{n,\pm}$ and $\hat{S}_{i + 1/2}^n$ have  been chosen. Thus, \eqref{Wbs0} defines a general scheme.


\section{A well-balanced finite volume scheme for cosmological fluid flows}
\label{thesection:4}

\subsection{The well-balanced property}

We require a well-balanced property for the scheme, that is, 
smooth steady state solutions of the Euler model should be preserved at the discrete level. The design of our well-balanced scheme is motivated by the earlier work \cite{MVCS2016} on the shallow water equations.
The steady state solutions satisfy the following system of ordinary differential equations:
\bse
\label{equa:static}
\bel{equilibrium-000}
\del_x F(U) = S(U, x),
\ee
where
\bel{equat-666}
\aligned
U  & = 
\bigg(
\begin{array}{cccc}
U_0 
\\
U_1
\\
\end{array} 
\bigg) 
 = \bigg(
\begin{array}{cccc}
\rho  (1 + \eps^4 k^2 u^2) 
\\
\rho u (1+ \eps^2 k^2)
\\
\end{array} 
\bigg),
\qquad \quad
F (U)  
& = 
\bigg(
\begin{array}{cccc}
F_0(U)
\\
F_1(U)
\\
\end{array} 
\bigg) 
= \bigg(
\begin{array}{cccc}
\rho u ( 1+ \eps^2 k^2)
\\
\rho (u^2 + k^2) 
\\
\end{array}\bigg),
\endaligned
\ee
and the source term is written as  
\be
S(U, x)  = \bigg(
\begin{array}{cccc}
S_0(U, x)\\
S_1(U,  x)\\
\end{array} 
\bigg) 
= \Bigg(
\begin{array}{cccc}
0
\\
2 \rho \, 
 k^2 \, {\del_x b \over b} (1 - \eps^2 u^2)
\end{array}
\Bigg).
\ee
\ese
We first assume that the rescaled density is positive such that $\rho_i^n > 0$ and $\rho_{i+1}^n > 0$.
From the scheme \eqref{Taa}, we observe that the solution is stationary that is $U_i^{n+1} = U_i^n$, 
if we have
\be
U_{M,i+1/2}^{n,-} = U_i^n, \qquad  U_{M,i-1/2}^{n, +} = U_i^n.
\ee

Therefore, after shifting $i \to i+1$ in the second condition, 
we look for the intermediate states $U_{M, i+1/2}^{n, -}$ and $U_{M, i+1/2}^{n, +}$ in the approximate Riemann solver satisfying
\be
U_{M, i+1/2}^{n, -} = U_i^n,  \qquad U_{M,i + 1/2}^{n, +} = U_{i+1}^n,
\ee
whenever $U_i^n$ and $U_{i+1}^n$ can be connected by a (continuous) steady state solution. After removing superscript $n$ and subscript $i\pm 1/2$ and shifting $i \to L$, $i+1 \to R$, we have the following property.

\begin{definition} [The well-balanced property] 
\label{Def: WB}
The scheme \eqref{Taa} is {\bf{well-balanced}} provided the intermediate states $U_{M}^-$ and $U_{M}^+$  are chosen to be
\be
U_{M}^- = U_L,  \qquad U_{M}^+ = U_R, 
\ee
whenever $U_L$ and $U_R$ can be connected by a (continuous) steady state solution to the system \eqref{equa:static}.
\end{definition}


\paragraph*{Construction of  $U_{1,M}^-$ and $U_{1,M}^+$}

We now look for the two intermediate state vectors denoted by
\be
 {U_M^-}= \bigg(
\begin{array}{cccc}
U_{0,M}^-
\\
U_{1,M}^-
 \\
\end{array}
\bigg),
\qquad 
{U_M^+} = \bigg(
\begin{array}{cccc}
U_{0,M}^+
\\
U_{1,M}^+
\\
\end{array}
\bigg),
\ee
which should satisfy the well-balanced property expressed in  Definition \ref{Def: WB}.

We begin by determining $U_{1,M}^-$ and $U_{1,M}^+$.
Note that the first component of the source term is $0$, and $F_0(U) = U_1$ for the non-homogeneous system. It is natural to impose the following 
condition:
\[
U_{1,M}^- = U_{1,M}^+,
\]
and we denote this value by $U_{1,M}^{\pm}$. Thus, with the consistency condition \eqref{eq:hll1} we obtain
\bel{U2}
\aligned
U_{1, M}^{\pm}
& = U_{1,M}^- = U_{1,M}^+ 
  = U_{1,M} + { \St_{1}(\Delta t, \Delta x;U_L, U_R)\Delta x \over \lambda_R - \lambda_L},
\endaligned
\ee
where $U_{1,M}$ is the second component of $U_M$ given by \eqref{eq: HLL1M}, $\lambda_R$ and $\lambda_L$ are given by $\eqref{speed0}$, the notation for the 
approximation of the source term $\St_{1}(\Delta t, \Delta x;U_L, U_R)$ is given by  \eqref{eq: souce}.

Therefore, the second component of intermediate state $U_{1,M}^- $ and  $U_{1,M}^+$ can be defined once the 
approximation of the source term is chosen.


\paragraph*{Construction of  $U_{0,M}^-$ and $U_{0,M}^+$}

We next turn to determine  $U_{0,M}^-$ and $U_{0,M}^+$.
We assume that  $U_L = U(\rho_L, u_L)$
and $U_R = U(\rho_R, u_R)$ can be connected by a steady state solution, with  $\rho_L > 0$ and $\rho_R > 0$. Under this assumption we derive the missing conditions of the scheme. 
We thus impose the following relations: 
\bel{Euler-Steady}
\aligned
\rho_R u_R (1+ \eps^2k^2) -\rho_L u_L (1+ \eps^2k^2)
& = 0,
\\
\rho_R \big(u_R^2 +k^2\big) - \rho_L \big(u_L^2 +k^2\big) 
& = \St_{1}(\Delta t, \Delta x;U_L, U_R) \Delta x. 
\endaligned
\ee

Based on \eqref{Euler-Steady}, we introduce the following state   
\bel{US1}
U_{1,LR} = \rho_R u_R (1+ \eps^2k^2) =\rho_L u_L (1+ \eps^2k^2).
\ee
By combining \eqref{US1} and the second equation in \eqref{Euler-Steady}, we obtain
\bel{US2}
\Bigg(k^2 - \Big({U_{1,LR} \over 1+ \eps^2k^2}\Big)^2 {1 \over \rho_{R} \rho_L}\Bigg)(\rho_R - \rho_L) 
=  \St_{1}(\Delta t, \Delta x;U_L, U_R)  \Delta x,
\ee
which is equivalent to
\bel{US3}
\Big(k^2 - u_L u_R\Big) \Big(\rho_R - \rho_L \Big) 
=  \St_{1}(\Delta t, \Delta x;U_L, U_R) \Delta x.
\ee


To determine $U_{0,M}^- $ and  $U_{0,M}^+$,
we need to look for the relation between $U_{0, L}$ and $U_{0, R}$ which can be obtained by utilizing \eqref{equat-666} and \eqref{US1} where
\bel{U1LR}
\aligned
U_{0,R} - U_{0, L}
& 
= \rho_R (1+ \eps^4 k^2 u_R^2) - \rho_L (1+ \eps^4 k^2 u_L^2) 
\\
&
= \Bigg(1 - \eps^4k^2 \Big({U_{1,LR} \over 1+ \eps^2k^2}\Big)^2 {1 \over \rho_R \rho_L}\Bigg)(\rho_R - \rho_L)
= \Big(1 - \eps^4k^2 u_L u_R\Big) \Big(\rho_R - \rho_L\Big).
\endaligned
\ee

Since $k\in(0, 1/\eps),$ therefore, $u_L, u_R \in (- 1/\eps, 1/\eps)$, and $1 - \eps^4k^2 u_L u_R > 0$. Thus, the relation between $\rho_R$ and $\rho_L$ can be written as
\bel{U-11}
\rho_R - \rho_L = {1\over 1 - \eps^4k^2 u_L u_R} \Big(U_{0, R} - U_{0, L}\Big).
\ee
Substituting \eqref{U-11} into \eqref{US3} yields a relation between $U_{0, L}$ and $U_{0, R}$,
\bel{bta}
{k^2 - u_L u_R \over 1 - \eps^4k^2 u_L u_R} \Big(U_{0,R} - U_{0,L}\Big) 
=   \St_{1}(\Delta t, \Delta x;U_L, U_R)  \Delta x.
\ee

We introduce a new function 
\bel{beta1}
\Lambda(u_L, u_R) = {k^2
 - u_L u_R 
 \over 1 - \eps^4k^2 u_L u_R}.
\ee
To ensure the well-balanced property, let us enforce the following condition for the approximate Riemann solver $U_{0,M}^- $ and  $U_{0,M}^+$:
\bel{eq:UMLR}
U_{0,M}^+ - U_{0,M}^- = U_{0,R} - U_{0,L}.
\ee
We thus can extend \eqref{bta} to the intermediate states $U_{0,M}^- $ and  $U_{0,M}^+$, which gives:
\bel{U1LR*}
\Lambda(u_L, u_R) (U_{0,M}^+ - U_{0,M}^-) =  \St_{1}(\Delta t, \Delta x;U_L, U_R)  \Delta x.
\ee 

The states $U_{0,M}^- $ and  $U_{0,M}^+$ are constructed as follows. 
\bei

\item If $ \Lambda(u_L, u_R) \neq 0$. From \eqref{eq:hll1} and \eqref{U1LR*}, we obtain
\bel{U12LR}
\aligned
U_{0,M}^+ &= U_{0,M} - {\lambda_{L} \over \lambda_R - \lambda_L}  { \St_{1}(\Delta t, \Delta x;U_L, U_R)  \Delta x \over \Lambda(u_L, u_R)}, 
\\
U_{0,M}^- &= U_{0,M} - {\lambda_{R} \over \lambda_R - \lambda_L}{ \St_{1}(\Delta t, \Delta x;U_L, U_R)  \Delta x \over \Lambda(u_L, u_R)},
\endaligned
\ee
where $\lambda_R$ and $\lambda_L$ are chosen in $\eqref{speed0}$.

\item If $ \Lambda(u_L, u_R) = 0$, that is $k^2 = u_L u_R$.

According to \eqref{US3}, we observe that $\St_{1}(\Delta t, \Delta x;U_L, U_R) =0$ in this case.
Instead of using  \eqref{U1LR*}, we utilize \eqref{U-11} and \eqref{eq:UMLR}, which gives 
\[ 
U_{0,M}^+ - U_{0,M}^- = (1 - \eps^4k^4) (\rho_R - \rho_L).
\]
Together with \eqref{eq:hll1}, we obtain 
\bel{U12LR0}
\aligned
U_{0,M}^+ &= U_{0,M} - {\lambda_{L} \over \lambda_R - \lambda_L} (1 - \eps^4k^4) (\rho_R - \rho_L), 
\qquad
U_{0,M}^- = U_{0,M} - {\lambda_{R} \over \lambda_R - \lambda_L} (1 - \eps^4k^4) (\rho_R - \rho_L).
\endaligned
\ee

\eei

\noindent In summary, the expressions for the intermediate states $U_M^- $ and  $U_M^+$ have been chosen, and we are only left with defining the discretization of the (second) source-term which has already been expressed in term of an auxiliary variable $U_{LR}$. 


\subsection{Property of the intermediate state}

We recall that the first component given in \eqref{equat-666} is non-negative if the weighed density $\rho \geq 0$. However, the expressions of the intermediate state \eqref{U12LR} and \eqref{U12LR0} may lead 
to no-positive  $U_{0,M}^- $ and  $U_{0,M}^+$. We thus need a modification to ensure that the positivity of  $U_{0,M}^- $ and  $U_{0,M}^+$.

We introduce a small parameter $\theta \geq 0$, which will be fixed in the numerical experiments later. We modify the intermediate state as follows:
\bei

\item If $U_{0,M}^+  \leq \theta$, we take $U_{0,M}^+ = \theta$ and, from \eqref{eq:hll1}, we get
\bse
\label{eq:Pos}
\bel{PosUM-}
U_{0,M}^-
=  \Big(
1- {\lambda_R \over \lambda_L}
\Big)  
U_{0, M} + {\lambda_R \over \lambda_L} \theta.
\ee

\item If $U_{0,M}^- \leq \theta$, we take $U_{0,M}^- = \theta$. We get
\bel{posUM+}
U_{0,M}^+
=  \Big(1- {\lambda_L \over \lambda_R }
\Big)  
U_{0, M}+ {\lambda_L \over \lambda_R} \theta .
\ee
\ese

\item Otherwise, we do not apply this positivity procedure.
\eei
Such a correction procedure ensures that $U_{0,M}^+ \geq 0$ and $U_{0,M}^- \geq 0$.

\begin{lemma}
Given any two constants $U_L$ and $U_R$, 
the intermediate states $U_{M}^+ $ and $U_{M}^-$ given by \eqref{U12LR0}-\eqref{eq:Pos} satisfy the following properties:

\bei

\item {\bf Consistency property.} The  intermediate state $U_M^+ $ and $U_M^-$ satisfy  the consistency condition \eqref{eq:hll1}.

\item {\bf Well-balanced property.} If $U_L$ and $U_R$ are  connected by a continuous steady state solution avoiding the sonic point, then 
\be
U_M^{-} = U_L,  \qquad U_M^+ = U_R.
\ee

\item {\bf Positivity property.} If $U_{0,L}$ and $U_{0,R}$ are non-negative, the intermediate states  $U_{0,M}^+ $ and $U_{0,M}^-$ given by 
\eqref{U12LR0}-\eqref{eq:Pos} are non-negative.
\eei
\end{lemma}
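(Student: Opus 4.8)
The plan is to verify each of the three properties in turn, by carefully tracking which of the defining formulas \eqref{U12LR}, \eqref{U12LR0}, \eqref{eq:Pos} applies and reducing the claim to the consistency condition \eqref{eq:hll1} together with the elementary algebraic identities already established in the construction, namely \eqref{U-11}, \eqref{US3}, and \eqref{bta}. The second component $U_{1,M}^{\pm}$ is in all cases defined by \eqref{U2}, so it automatically satisfies the second scalar equation of \eqref{eq:hll1}; thus throughout I only need to track the first component $U_{0,M}^{\pm}$.

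\emph{Consistency.} For the first component, the requirement from \eqref{eq:hll1} is $\lambda_R U_{0,M}^+ - \lambda_L U_{0,M}^- = (\lambda_R-\lambda_L)U_{0,M} + \delta\,\St_1$. First I would check the three branches of the unmodified construction. In the branch $\Lambda(u_L,u_R)\neq 0$ this is a direct substitution of \eqref{U12LR}: the two terms $-\lambda_R\lambda_L/(\lambda_R-\lambda_L)$ cancel in $\lambda_R U_{0,M}^+ - \lambda_L U_{0,M}^-$ up to the factor $\St_1\Delta x/\Lambda$, and using \eqref{bta} — which reads $\Lambda(u_L,u_R)(U_{0,R}-U_{0,L}) = \St_1\Delta x$ — one recovers exactly the right-hand side. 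In the branch $\Lambda=0$, one has $\St_1=0$ by \eqref{US3}, and substituting \eqref{U12LR0} gives $\lambda_R U_{0,M}^+ - \lambda_L U_{0,M}^- = (\lambda_R-\lambda_L)U_{0,M}$, which is \eqref{eq:hll1} with a vanishing source, as needed. Finally, the positivity correction \eqref{eq:Pos} was defined precisely by solving \eqref{eq:hll1} for the other intermediate state after fixing one of them to $\theta$; so consistency is preserved by construction. I would state this last point explicitly since it is the whole reason the corrections take the form \eqref{PosUM-}, \eqref{posUM+}.

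\emph{Well-balancedness.} Assume $U_L$ and $U_R$ are joined by a continuous steady state avoiding the sonic point. Then the steady relations \eqref{Euler-Steady} hold, hence \eqref{US1}, \eqref{US3}, \eqref{U1LR}, \eqref{U-11}, and \eqref{bta} all hold with the \emph{exact} source average on the right (this is the content of choosing $\St_1$ so that \eqref{Euler-Steady} is exact on such pairs). Avoiding the sonic point means $\lambda_L<0<\lambda_R$ strictly, so the formulas \eqref{U12LR}/\eqref{U12LR0} are well-defined. I would then show $U_{0,M}^- = U_{0,L}$ and $U_{0,M}^+ = U_{0,R}$ by combining: (i) the difference relation \eqref{eq:UMLR}, i.e. $U_{0,M}^+ - U_{0,M}^- = U_{0,R}-U_{0,L}$, which holds by construction in both branches; and (ii) the consistency identity just proved, $\lambda_R U_{0,M}^+ - \lambda_L U_{0,M}^- = (\lambda_R-\lambda_L)U_{0,M}+\delta\St_1$, where now $\delta\St_1 = \Lambda(u_L,u_R)(U_{0,R}-U_{0,L})$ by \eqref{bta}. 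Solving this $2\times 2$ linear system for $(U_{0,M}^+, U_{0,M}^-)$ and using the explicit form of $U_{0,M}$ from \eqref{eq: HLL1M} together with the steady-state flux balance $F_0(U_R)-F_0(U_L) = U_{1,R}-U_{1,L}=0$ and $F_1(U_R)-F_1(U_L) = \delta\St_1$, one reads off $U_{0,M}^- = U_{0,L}$, $U_{0,M}^+ = U_{0,R}$. One must also check that on such a steady pair the positivity correction is not triggered, i.e. $U_{0,M}^{\pm} = U_{0,R/L} \geq \rho_{R/L}>0 \geq \theta$ provided $\theta$ is chosen no larger than $\min(U_{0,L},U_{0,R})$; I would record this caveat, or simply note $\theta$ is taken sufficiently small.

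\emph{Positivity.} This is the most routine part but worth spelling out. If neither correction is triggered we are in the "otherwise" case and there is nothing to prove about the sign beyond what the branch formulas give — however this is exactly why the correction exists, so the real content is: when $U_{0,M}^+ \leq \theta$ we \emph{set} $U_{0,M}^+=\theta\geq 0$ and then \eqref{PosUM-} gives $U_{0,M}^- = (1-\lambda_R/\lambda_L)U_{0,M} + (\lambda_R/\lambda_L)\theta$; since $\lambda_L<0<\lambda_R$ we have $\lambda_R/\lambda_L<0$, so $1-\lambda_R/\lambda_L>1>0$ and, provided $U_{0,M}\geq 0$, every term is $\geq 0$. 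The symmetric case \eqref{posUM+} is identical with $L\leftrightarrow R$. Thus the only thing to verify is $U_{0,M}\geq 0$ whenever $U_{0,L},U_{0,R}\geq 0$: from \eqref{eq: HLL1M}, $U_{0,M} = \big(\lambda_R U_{0,R} - \lambda_L U_{0,L} - (F_0(U_R)-F_0(U_L))\big)/(\lambda_R-\lambda_L)$, and I would argue this is nonnegative from the standard HLL sign analysis — $\lambda_L \leq \lambda_1 \leq \lambda_2 \leq \lambda_R$ by \eqref{speed0} and \eqref{equa:eigenv}, so $U_{0,M}$ is a convex-combination-type average of the exact Riemann solution's conserved density, which stays nonnegative.

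\emph{Main obstacle.} The bookkeeping itself is light; the one genuinely delicate point is the interplay between the \emph{exactness} of the source discretization $\St_1$ on steady pairs and the well-balanced conclusion — one has to make sure that $\St_1(\Delta t,\Delta x;U_L,U_R)\Delta x$ really equals $\Lambda(u_L,u_R)(U_{0,R}-U_{0,L})$ for \emph{the particular} $\St_1$ used (which in the concrete examples is only second-order accurate, not exact), so the statement should be read as: well-balanced for those $\St_1$ satisfying \eqref{Euler-Steady} exactly on steady pairs — this is implicit in the hypotheses of Definition~\ref{Def: WB} and I would flag it. Secondarily, one should confirm the sonic-point exclusion is exactly what guarantees $\lambda_R-\lambda_L\neq 0$ and $\lambda_R/\lambda_L$ finite, so none of the displayed fractions degenerate.
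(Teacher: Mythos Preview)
The paper states this lemma without proof, so there is no reference argument to compare against; your sketch is the only proof on the table. The well-balanced part is essentially correct, but there are two genuine slips elsewhere.

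\medskip

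\textbf{Consistency.} You write the first-component condition as
\[
\lambda_R U_{0,M}^+ - \lambda_L U_{0,M}^- = (\lambda_R-\lambda_L)U_{0,M} + \delta\,\St_1,
\]
but the source vector here has $\St_0=0$ (since $S_0\equiv 0$ when $a\equiv 1$); the correct right-hand side is just $(\lambda_R-\lambda_L)U_{0,M}$. Your direct substitution of \eqref{U12LR} already gives exactly this --- the two cross terms $\lambda_R\lambda_L/(\lambda_R-\lambda_L)$ cancel completely --- so the appeal to \eqref{bta} is both unnecessary and illegitimate: \eqref{bta} (and likewise \eqref{US3}) was derived \emph{under the hypothesis} that $U_L,U_R$ lie on a common steady state, not for arbitrary data. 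The same remark applies to the branch $\Lambda=0$: you do not need $\St_1=0$ there at all, since again the correction terms in \eqref{U12LR0} cancel identically in $\lambda_R U_{0,M}^+ - \lambda_L U_{0,M}^-$. So the consistency conclusion is right, but the route you describe mixes in steady-state identities that are unavailable for general $U_L,U_R$.

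\medskip

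\textbf{Positivity.} In the corrected formula
\[
U_{0,M}^- = \Big(1-\tfrac{\lambda_R}{\lambda_L}\Big)U_{0,M} + \tfrac{\lambda_R}{\lambda_L}\,\theta,
\]
you claim ``every term is $\geq 0$'' once $U_{0,M}\geq 0$. That is false: since $\lambda_R/\lambda_L<0$, the term $(\lambda_R/\lambda_L)\theta$ is $\leq 0$ whenever $\theta>0$. The inequality $U_{0,M}^-\geq 0$ does hold, but it requires the additional observation $U_{0,M}\geq \theta$ (write $U_{0,M}^- = U_{0,M} + (\lambda_R/\lambda_L)(\theta-U_{0,M})$ and use $\theta-U_{0,M}\leq 0$), or else one simply takes $\theta=0$. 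Either way, the missing ingredient is a lower bound on $U_{0,M}$ relative to $\theta$, not merely $U_{0,M}\geq 0$.
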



\subsection{Choice of the discretization of the source}
\label{source}

In this section, we give a suitable expression for the function $\St_{1}(\Delta t, \Delta x;U_L, U_R) $, when the exact source term $S_1$  is given as follows:
\bel{SourceNoa1}
S_1(U,  x)  = 
2 \rho k^2 { b_x \over b} (1 - \eps^2 u^2),
\qquad \text{ when } a(t) \equiv 1. 
\ee

We derive our expression by restricting attention first to data $U_L = U(\rho_L, u_L)$ 
and $U_R = U(\rho_R, u_R)$ that can be connected by smooth steady state solutions.
Recall that the steady state solution satisfy the ODE \eqref{equilibrium-000}, we have
\bel{EulerSS2}
\Big({U_{1,LR} \over 1+ \eps^2 k^2} \Big)^2 \Big({1\over \rho }\Big)_x + k^2 \rho_x = 2 \rho k^2 {b_x \over b} (1 - \eps^2 u^2),
\ee
where $U_{1,LR}$ has been introduced in \eqref{US1}:
\[
U_{1,LR} = \rho_R u_R (1+ \eps^2k^2) =\rho_L u_L (1+ \eps^2k^2).
\]

Dividing by $\rho$, \eqref{EulerSS2} becomes:
\bel{EulerSS22}
{1\over 2}\Big({U_{1,LR} \over 1+ \eps^2 k^2} \Big)^2 \Big({1\over \rho^2 }\Big)_x + k^2 (\ln \rho)_x = 2  k^2 {b_x \over b} (1 - \eps^2 u^2).
\ee

Integrating \eqref{EulerSS2} and \eqref{EulerSS22} over $[x_L, x_R]$, we obtain  the following algebraic relations:
\bel{Eulerflrw5}
\aligned
\Big({U_{1,LR} \over 1+ \eps^2 k^2} \Big)^2 \Big({1\over \rho_R} - {1\over \rho_L}\Big)
+ k^2 (\rho_R - \rho_L) 
& = \St_{1}(\Delta t, \Delta x;U_L, U_R) \Delta x, \\
{1\over 2}\Big({U_{1,LR} \over 1+ \eps^2 k^2} \Big)^2 \Big({1\over \rho_R^2} - {1\over \rho_L^2}\Big)
+ k^2  \ln{{\rho_R} \over \rho_L} 
&= 2 k^2 (1-\eps^2 u_{LR}^2) \ln{b(x_R) \over b(x_L)},
\endaligned
\ee
where the parameter $u_{LR}$ denotes  the approximation of the mean value of $u$, which is  consistent with $u$.
We choose the parameter $u_{LR}$ of the form
\bel{pmv}
u_{LR} = {u_L + u_R \over 2}.
\ee

Eliminating  $U_{1,LR}$ in the first equation of \eqref{Eulerflrw5} yields
\bel{SourceAppr}
\St_{1}(\Delta t, \Delta x;U_L, U_R)  \Delta x
= 2 k^2 \Big(1-\eps^2 u_{LR}^2 \Big) {2 \rho_L \rho_R \over \rho_L + \rho_R}  \ln{b(x_R) \over b(x_L) }
- 
k^2 {2 \rho_L \rho_R \over \rho_L + \rho_R}  \ln{\rho_R \over \rho_L}  
+
k^2 (\rho_R - \rho_L),
\ee
which provides us with one algebraic relation for the function $\St_{1}(\Delta t, \Delta x;U_L, U_R)$.

To shorten the notation, we introduce the following new functions:
\bel{eq: wrho}
w = w(\rho)= {1\over \rho}, \qquad B =  B(x) = \ln b(x).
\ee
Therefore, \eqref{SourceAppr} can be rewritten as follows:
\bel{eq:SourceAppr}
\St_{1}(\Delta t, \Delta x;U_L, U_R)  \Delta x
= {2 k^2 \over w_{LR}}\Big(1-\eps^2 u_{LR}^2 \Big)  ( B_R- B_L) 
- k^2 W_{LR},
\ee
where 
\bel{eq:wLR}
w_{LR} = {w_L + w_R \over 2},
\ee
and 
\bel{eq:WLR}
A_{LR} = A(w_L, w_R) = { 1 \over w_{LR} }\ln{w_L \over w_R}  
-
\Big({1\over w_R} - {1\over w_L}\Big).
\ee

The approximation of the source term has been determined when $U_L$ and $U_R$ are connected by the steady state solutions. However, we note that the expression \eqref{SourceAppr} only depends on the left and right states,  thus we can extend it for unsteady states.

\begin{definition} The following expression 
\bel{SourceT}
\St_{1}(\Delta t, \Delta x;U_L, U_R)  \Delta x
= {2 k^2 \over w_{LR}}\big(1-\eps^2 u_{LR}^2 \big)  ( B_R- B_L)  
- k^2 A_{LR}
\ee
is called the well-balanced source for the Euler system,
where $v_{LR}$ and  $w_{LR}$ are given by \eqref{pmv} and \eqref{eq:wLR}, $A_{LR}$ is  given by \eqref{eq:WLR},  $w$ and $B$ are introduced in \eqref{eq: wrho}.
\end{definition}

\begin{lemma}[Consistency of the approximation of the source term]
\label{ConsistencySouce}
For smooth solution $\rho = \rho(x)$ and $u = u(x)$, and given function $b= b(x)$, the expression of $\St_{1}(\Delta t, \Delta x;U_L, U_R)$ given by \eqref{SourceT} is consistent with the source term $S_1(U, x) = 2 \rho k^2 { b_x \over b} (1 - \eps^2 u^2)$.
\end{lemma}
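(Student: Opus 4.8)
The plan is to verify consistency in the usual finite-volume sense: as $\Delta x = x_R - x_L \to 0$ with $U_L, U_R$ sampled from smooth fields $\rho(\cdot), u(\cdot)$ at $x_L$ and $x_R$, the quantity $\St_1(\Delta t,\Delta x;U_L,U_R)$ converges to the pointwise value $S_1(U(x_*),x_*) = 2\rho(x_*) k^2 (b_x/b)(x_*)(1-\eps^2 u(x_*)^2)$ at the common limit point $x_*$. Since the right-hand side of \eqref{SourceT} is already multiplied by $\Delta x$ on the left, I would divide through by $\Delta x$ and Taylor-expand each ingredient to first order in $\Delta x$, keeping only the leading term.

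First I would expand the geometric factor: $B_R - B_L = \ln b(x_R) - \ln b(x_L) = (\ln b)_x(x_*)\,\Delta x + O(\Delta x^2) = (b_x/b)(x_*)\,\Delta x + O(\Delta x^2)$. Next I would handle the averaged quantities: $w_{LR} = (w(\rho_L)+w(\rho_R))/2 \to w(\rho(x_*)) = 1/\rho(x_*)$ and $u_{LR} = (u_L+u_R)/2 \to u(x_*)$, both with $O(\Delta x)$ corrections that only affect higher-order terms. Substituting these into the first term of \eqref{SourceT} gives
\[
\frac{2k^2}{w_{LR}}(1-\eps^2 u_{LR}^2)(B_R - B_L)
= 2k^2 \rho(x_*)\,(1-\eps^2 u(x_*)^2)\,\frac{b_x}{b}(x_*)\,\Delta x + O(\Delta x^2),
\]
which is exactly $S_1\,\Delta x$ to leading order. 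So the crux is to show that the remaining term $-k^2 A_{LR}$ is $o(\Delta x)$, i.e.\ it does not contribute to the leading-order limit.

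The main obstacle — and really the only nontrivial point — is the estimate on $A_{LR}$ defined in \eqref{eq:WLR}: one must check that $A(w_L,w_R) = \frac{1}{w_{LR}}\ln\frac{w_L}{w_R} - \big(\frac{1}{w_R}-\frac{1}{w_L}\big) = O(\Delta x^3)$, or at least $o(\Delta x)$, when $w_L = w(x_*) + O(\Delta x)$ and $w_R = w(x_*) + O(\Delta x)$ with $w$ smooth. I would do this by writing $w_R = w_L(1+\delta)$ with $\delta = O(\Delta x)$, so that $w_{LR} = w_L(1+\delta/2)$, $\ln(w_L/w_R) = -\ln(1+\delta) = -\delta + \delta^2/2 - \delta^3/3 + \cdots$, and $1/w_R - 1/w_L = \frac{1}{w_L}\big(\frac{1}{1+\delta}-1\big) = \frac{1}{w_L}(-\delta+\delta^2-\delta^3+\cdots)$. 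Then
\[
A_{LR} = \frac{1}{w_L(1+\delta/2)}\big(-\delta+\tfrac{\delta^2}{2}-\cdots\big) - \frac{1}{w_L}\big(-\delta+\delta^2-\cdots\big),
\]
and expanding $(1+\delta/2)^{-1} = 1-\delta/2+\delta^2/4-\cdots$ I would check that the $O(\delta)$ and $O(\delta^2)$ contributions cancel identically, leaving $A_{LR} = O(\delta^3)/w_L = O(\Delta x^3)$. (One should note that the symmetric choice of $w_{LR}$ as the arithmetic mean is precisely what forces the $O(\delta^2)$ cancellation — this is the reason that particular averaging was chosen.) Consequently $-k^2 A_{LR}/\Delta x \to 0$, and dividing \eqref{SourceT} by $\Delta x$ and passing to the limit yields $\St_1 \to S_1(U(x_*),x_*)$, establishing consistency. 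Finally I would remark that the factor $1-\eps^2 u_{LR}^2$ is manifestly a consistent approximation of $1-\eps^2 u^2$ by continuity of $u$, so no separate argument is needed there.
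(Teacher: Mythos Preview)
Your proof is correct and follows essentially the same route as the paper: Taylor-expand $B_R-B_L$, $w_{LR}$, and $u_{LR}$ to identify the first term of \eqref{SourceT} with $S_1\,\Delta x$, then show that $A_{LR}$ is negligible. Your expansion in the dimensionless increment $\delta$ is in fact slightly sharper than the paper's---you obtain $A_{LR}=O(\Delta x^3)$ with explicit cancellation at second order, whereas the paper stops at $A_{LR}=O(\Delta x^2)$---but either bound suffices for consistency, and the argument is the same in substance.
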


\begin{proof}
For any smooth solution $\rho = \rho(x)$, $u = u(x)$ and any smooth function $b(x)$, we take $\rho_L = \rho(x)$ and $\rho_R = \rho(x + \Delta x)$,  $u_L = u(x)$ and $u_R = u(x + \Delta x)$.
With the functions $w$ and $B$ introduced in \eqref{eq: wrho},  we know that $w$ and $B$ are smooth functions. Hence, we let
$w_L = w(x)$, $w_R = w(x + \Delta x)$, $B_L = B(x)$ and $B_R = B(x + \Delta x)$. We then use the Taylor's expansion
\be
\aligned
& 
w_R = w + \del_x w \Delta x  + \mathcal{O}(\Delta x^2),
\qquad
u_R = u + \del_x u \Delta x  +  \mathcal{O}(\Delta x^2),
\\
&
B_R = B + \del_x B \Delta x  +  \mathcal{O}(\Delta x^2).
\endaligned
\ee

In \eqref{SourceT}, we have
\[
w_{LR} = { 2 w +  \mathcal{O}(\Delta x) \over 2}, \qquad u_{LR} = {2 u +  \mathcal{O}(\Delta x)\over 2},
\]
and
\[
B_R - B_L = B_x \Delta x +  \mathcal{O}(\Delta x^2).
\]

Moreover, for the second part $W_{LR}$ of \eqref{SourceT},
\bel{TaWLR}
\aligned
A_{LR} 
&
= { 1 \over w_{LR} }\ln{w_L \over w_R}  
-
\Big({1\over w_R} - {1\over w_L}\Big)
\\
&
= {2 \over 2 w +  \mathcal{O}(\Delta x)} \Big({w_x\over w} \mathcal{O}(\Delta x) + \mathcal{O}(\Delta x^2)\Big) - 
\Big({w_x \over w^2} \mathcal{O}(\Delta x) + \mathcal{O}(\Delta x^2)\Big)
  = \mathcal{O}(\Delta x^2). 
\endaligned
\ee

Above all, we have
\be
\St_{1}(\Delta t, \Delta x;U_L, U_R)  = 2k^2 \rho {b_x(x) \over b(x)}(1- \eps^2 u^2) +  \mathcal{O}(\Delta x),
\ee
which is consistent with $S_1(U, x)$.
\end{proof}


From the proof of Lemma~\ref{ConsistencySouce}, we note that the approximation of source term given by \eqref{SourceT} is  just consistent with the source term
for the smooth solution. Indeed, for discontinuous solutions, the second term $W_{LR}$ could not be consistent with $0$. To handle such an inconsistent term, we modify \eqref{SourceT} as follows:

\bei

\item If $\Big| {\St_{1}(\Delta t, \Delta x;U_L, U_R) \Delta x
\over {2 k^2 \over w_{LR}}\big(1-\eps^2 u_{LR}^2 \big)  ( B_R- B_L) }\Big| < \alpha$,  we use the relation \eqref{SourceT},
where $\alpha$ is a positive constant to be fixed in the numerical experiments.

\item Otherwise, we set 
\bel{Source-co}
\St_{1}(\Delta t, \Delta x;U_L, U_R) \Delta x
= {2 k^2 \over w_{LR}}\big(1-\eps^2 u_{LR}^2 \big)  ( B_R- B_L), 
\ee
\eei
Such modification ensure that the consistency of the approximation source term.

We note that the definition for the approximation of source term \eqref{SourceT} will not work  if $\rho_L = 0$ and $\rho_R = 0$.
Thus we take $\St_{1}(\Delta t, \Delta x;U_L, U_R)  \Delta x = 0$ if $\rho_L = 0$ and $\rho_R= 0$.  In this case, the intermediate states will be $0$. This completes the description of the algorithm (summarized in the next section). 


\subsection{A summary of our construction}

When $a(t)  \equiv 1$, the finite volume scheme for the Euler model takes the following form:
\bel{eq:1Wbs0}
U_i^{n+1} = U_i^n - {\Delta t \over \Delta x} 
\big( \FF_{i + 1/2}^n - \FF_{i-1/2}^n \big) + {\Delta t \over 2}  \big( \St_{i + 1/2}^n + \St_{i - 1/2}^n \big),
\ee 
where $\FF_{i + 1/2}^n$ is the numerical flux at the interface $y_{i+1/2}$, defined by
\bel{eq:FluxWbs1}
\aligned
\FF_{i + 1/2}^n 
& = \FF(U_i^n, U_{i+1}^n)
\\
& = {1 \over 2} \Big( F(U_i^n) + F(U_{i+1}^n)
 + {\lambda_{L,i+1/2}^n} \big(U_{M,i+1/2}^{n, -} - U_i^n\big) + {\lambda_{R,i+1/2}^n} \big(U_{M,i+1/2}^{n, +} - U_{i+1}^n\big)\Big).
\endaligned
\ee

Here, the intermediate states $U_{M,i+1/2}^{n, \pm}$ are chosen by \eqref{U2} and \eqref{U12LR}-\eqref{eq:Pos}. The wave speeds
are
\bel{eq:wavespeed1}
\aligned
\lambda_{R,i +1/2}^{n} 
&= \max\Bigg(0, {u_i^n+ k \over 1+ \eps^2 k u_i^n}, {u_{i + 1}^n+ k \over 1 +  \eps^2 k u_{i + 1}^n}\Bigg), 
\qquad
\lambda_{L,i +1/2}^{n} = \min\Bigg(0, {u_i^n- k \over 1- \eps^2 k u_i^n}, {u_{i + 1}^n- k \over 1 -  \eps^2 k u_{i + 1}^n} \Bigg).
\endaligned
\ee

And, $\St_{i + 1/2}^n$ is the numerical source term at the interface $y_{i+1/2}$  given by
\bel{eq:Nusource}
\St_{i + 1/2}^n = \St(\Delta t, \Delta x; U_i^n, U_{i+1}^n), 
\ee
which is defined by \eqref{SourceT} and \eqref{Source-co}.

We also  assume that the wave speeds satisfy the CFL condition:
\bel{CFL00}
 {\Delta t \over  \Delta x} \max\Big(|\lambda_{L,i+1/2}^{n}|, |\lambda_{R,i+1/2}^{n}|\Big)  < {1 \over 2 },
\ee
insuring  that no wave interaction takes place within one  time interval. The algorithm is thus based on the following steps:
\bei

\item  Firstly, for given the initial data $(\rho_i^n, u_i^n)$, we compute the conservative and flux variables: 
\bel{U11}
\aligned
& 
U_{0,i}^n = \rho_i^n  (1+ \eps^4k^2(u_i^n)^2), & \quad U_{1,i}^n = \rho_i^n u_i^n (1+ \eps^2 k^2),
\\
& 
F_{0,i}^n = \rho_i^n u_i^n (1+ \eps^2 k^2),  
& F_{1,i}^n = \rho_i^n ((u_i^n)^2 +k^2).
\endaligned
\ee

\item Secondly, by using the scheme \eqref{eq:1Wbs0}, the $U_{0,i}^{n+1}$  and $U_{1,i}^{n+1}$ values can be calculated.

\item Finally, we get  the primitive variables  $\rho_{i}^{n+1}$  and $u_{i}^{n+1}$ from 
the conservative variables $U_{0,i}^{n+1}$  and $U_{1,i}^{n+1}$. We have the following cases:

--- If $U_{0,i}^{n+1} = 0$, we take $\rho_i^{n+1} = 0$ and $u_i^{n+1} = 0$.

--- If $U_{0,i}^{n+1} \neq 0$ and $U_{1,i}^{n+1} = 0$, we take $\rho_i^{n+1} = U_{0,i}^{n+1}$ and $u_i^{n+1} = 0$.

--- If $U_{0,i}^{n+1} \neq 0$ and $U_{1,i}^{n+1} \neq 0$, we have 
\bel{Uv11}
\aligned
&
u_{i}^{n+1} = {1+ \eps^2k^2 - \sqrt{(1+ \eps^2k^2)^2 - 4\eps^4 k^2(U_{1,i}^{n+1}/U_{0,i}^{n+1})^2} 
\over 2\eps^4 k^2(U_{1,i}^{n+1}/U_{0,i}^{n+1})}, 
\qquad
\rho_{i}^{n+1} = {U_{0,i}^{n+1} \over 1+ \eps^4 k^2 (u_i^{n+1})^2 }.
\endaligned
\ee
\eei 


\subsection{Second-order accuracy in space}

In order to increase the accuracy in the numerical experiments, we construct a second-order scheme based on the above first-order scheme. We use a piecewise linear approximation of solution instead of the piecewise constant approximation of solution.

To shorten the notation, we denote by $q$ the state vector of primitive variables, i.e.  $q = (\rho, u)^T$.
For given primitive variables $q_i^n = (\rho_i^n, u_i^n)^T$ at the center of the cells $[x_{i-1/2}, x_{i+1/2}]$,  we now construct a piecewise linear approximation of solution $q_i^n(x)$, that is 
\bel{SPPL0}
q_i^n(x)= q_i^n + (x - x_i) \delta_i^n,
\ee
where $\delta_i^n$ is a local slope of the solution $q_i^n(x)$ in each cell.

We choose the minmod slope limiter under the following form:
\bel{SLMCL0}
\delta_i^n  = \begin{cases} \sgn(q_{i+1}^n - q_{i-1}^n) \min \Big( { |q_i^n - q_{i-1}^n| \over \Delta x},  { |q_{i+1}^n - q_i^n| \over \Delta x} \Big),  \quad
& \eta_i^n >0,
\\
0,
& \text{otherwise},
\end{cases}
\ee
where $\eta_j^n$ is defined by
\bel{eta1}
\eta_i^n = (q_{i+1}^n - q_i^n) (q_i^n - q_{i-1}^n).
\ee

Thus, the left and right values $q_{i+1/2}^{n,LR}$ at each interface $x = x_{i+1/2}$ can be obtained, as follows:
\bel{SWinterface0}
q_{i+1/2}^{n,L} = q_i^n + {\Delta x\over 2} \delta_i^n, 
\quad 
q_{i+1/2}^{n,R} = q_{i+1}^n - {\Delta x\over 2} \delta_{i+1}^n.
\ee

From \eqref{SWinterface0}, the conservative variables  $U_{i+1/2}^{n,L}$ and $U_{i+1/2}^{n,R}$ can be obtained. Finally, we use the following formula to update the approximate solution:
\bel{WbS20}
U_i^{n+1} = U_i^n - {\Delta t \over \Delta x} 
\big( \FF_{i + 1/2}^n - \FF_{i-1/2}^n \big) + {\Delta t \over 2}  \big( \St_{i + 1/2}^n + \St_{i- 1/2}^n \big),
\ee
where the numerical flux and source term are defined as
\bel{eq:Fluxsecond}
\FF_{i+ 1/2}^n = \FF(U_{i+1/2}^{n,L}, U_{i+1/2}^{n,R}),
\ee
and
\bel{eq:nusource2}
\St_{i+ 1/2}^n = \St(\Delta t, \Delta x; U_{i+1/2}^{n,L}, U_{i+1/2}^{n,R}).
\ee


The reconstructed scheme is not able to preserve all steady state solutions. We need to use the following reconstruction:
\bel{SWinterface1}
q_{i+1/2}^{n,L} = q_i^n +  {\Delta x\over 2} \delta_i^n \phi_i^n, 
\quad 
q_{i+1/2}^{n,R} = q_{i+1}^n -  {\Delta x\over 2} \delta_{i+1}^n \phi_{i+1}^n,
\ee
where $0 \leqslant \phi_i^n \leqslant 1$ is a parameter of the reconstruction. We note that when $\phi_i^n = 0$, the scheme is a first-order one, which preserves the steady state solution. Furthermore, when $\phi_i^n = 1$ the scheme is a standard second-order one. To define $ \phi_i^n$, we first need the following parameter:
\bel{psi1}
\psi_{i+1/2}^n = \rho_{i+1}^n (u_{i+1}^n + k^2) - \rho_i^n (u_i^n + k^2) -  \St_{1, i+ 1/2}^n \Delta x.
\ee
We can observe that if $q_{i+1}^n$ and $q_i^n$  are connected by the steady state solutions, $\psi_{i+1/2}^n$ will vanish.

Next, we define a function denoted by $\varphi_i^n$ to evaluate the deviation with respect to steady state solutions:
\bel{ups1}
\varphi_i^n = \Bigg\| \left(\begin{array}{c} \rho_{i+1}^n u_{i+1}^n -  \rho_i^n u_i^n  \\ \psi_{i+1/2}^n \end{array}\right)\Bigg\|_2
+ \Bigg\| \left(\begin{array}{c} \rho_i^n u_i^n - \rho_{i_1}^n u_{i_1}^n \\ \psi_{i-1/2}^n \end{array}\right)\Bigg\|_2.
\ee

We now define the parameter $\phi_i^n$ as follows:
\be
\phi_i^n =
 \begin{cases}
 0, \quad 
 & \varphi_i^n < m \Delta x,
 \\
 { \varphi_i^n - m \Delta x \over  M \Delta x - m \Delta x},
 
 &  m \Delta x < \varphi_i^n  < M \Delta x,
 \\
 1,
 & \varphi_i^n > M \Delta x,
 \end{cases}
\ee
where $0< m < M$ are numerical parameter will be given later.
If $q_{i+1}^n$ and $q_i^n$, $q_{i-1}^n$ are connected by a steady state solution, $\varphi_i^n$ will be very small, in this case the well-balanced scheme is used.  If $\varphi_i^n$ is
large enough, we use the  second-order  scheme.


\subsection{Taking the expanding or contracting effects $\boldsymbol {a(t)}$ into account}

We now consider the system with $a = a(t)$, and begin by providing 
a summary of the scheme presented in the previous section. 

We construct the HLL scheme for the full model \eqref{equa:Euler-Form}, that is, 
\bel{eq:3Euler-Formwitha}
\del_t U + \del_x F (U) = S(U,  x,  t).
\ee
We also recall the expressions of the conservative and flux variables: 
\be
\aligned
U  & = 
\bigg(
\begin{array}{cccc}
U_0 
\\
U_1 
\\
\end{array} 
\bigg) 
 = \bigg(
\begin{array}{cccc}
\rho  (1 + \eps^4 k^2 u^2) 
\\
\rho u (1+ \eps^2 k^2)
\\
\end{array} 
\bigg),
\qquad
F (U)  
& = 
\bigg(
\begin{array}{cccc}
F_0(U)
\\
F_1(U)
\\
\end{array} 
\bigg) 
= \bigg(
\begin{array}{cccc}
\rho u ( 1+ \eps^2 k^2)
\\
\rho (u^2 + k^2) 
\\
\end{array}\bigg),
\endaligned
\ee
while the source term has the form 
\be
S(U, x, t)  = \bigg(
\begin{array}{cccc}
S_0(U, x, t)\\
S_1(U,  x, t)\\
\end{array} 
\bigg) 
= \Bigg(
\begin{array}{cccc}
- {\del_t a \over a} \rho \Big( 1 + 3 \eps^2 k^2 + (1-\eps^2 k^2) \eps^2 u^2 \Big) 
\\
2 \rho \, 
\Big( k^2 \, {\del_x b \over b} (1 - \eps^2u^2) - {\del_t a \over a}  (1+ \eps^2 k^2) u
\Big)
\end{array}
\Bigg).
\ee

We split the source term into two parts as follows:
\be
S(U, x, t) =  P(U, x) + Q(U, x, t)
\ee
where
\be 
P(U, x)  = \bigg(
\begin{array}{cccc}
0\\
P_1(U, x)\\
\end{array} 
\bigg) 
= \Bigg(
\begin{array}{cccc}
0
\\
2 \rho k^2 \, {\del_x b \over b} (1 - \eps^2 u^2)
\end{array}
\Bigg),
\ee
and 
\be 
Q(U, x, t)  = \bigg(
\begin{array}{cccc}
Q_0(U, x, t)\\
Q_1(U, x, t)\\
\end{array} 
\bigg) 
= \Bigg(
\begin{array}{cccc}
- {\del_t a \over a} \rho \Big( 1 + 3 \eps^2 k^2 + (1-\eps^2 k^2) \eps^2 u^2 \Big) 
\\
-2 {\del_t a \over a} \rho \, 
 (1+ \eps^2 k^2) u
\end{array}
\Bigg).
\ee

We use the finite volume methodology  to discretize
the model \eqref{eq:3Euler-Formwitha}.
We denote by $\Ul_i$ and $\Sl_i$ the cell average of the solution $U(x, t)$ and the source term
$S(U, x, t)$ over a cell $[x_{i-1/2}, x_{i+1/2}]$ at time $t$:
\be
\Ul_i = {1\over \Delta x} \int_{x_{i-1/2}}^{x_{x + 1/2}} U(x, t) dx, 
\qquad
\Sl_i =  {1\over  \Delta x} \int_{x_{i-1/2}}^{x_{i + 1/2}}  S(U, x,  t) dx,
\ee
and
let $\Pl_i$ and $\Ql_i$ be the approximation of the average of source term $P(U, x)$ and $Q(U, x, t)$,
\be
\Pl_i = {1\over \Delta x} \int_{x_{i-1/2}}^{x_{i + 1/2}} P(U, x) dx, \qquad \Ql_j =  {1\over  \Delta x} \int_{x_{i-1/2}}^{x_{i + 1/2}}  Q(U, x,  t) dx.
\ee

Integrating \eqref{eq:3Euler-Formwitha} over the space cell $[x_{i-1/2}, x_{i+1/2}]$, we obtain the following semi-discrete equations:
\bel{Semi-discrete1}
{d\Ul_i \over dt} = - {1 \over \Delta x} 
\Big( \FF_{i+ 1/2} - \FF_{i-1/2}  \Big) +  \Pl_i + \Ql_i.
\ee

For the choice of the numerical flux $\FF_{i+1/2}$ and the source term $\Pl_i$, we use the well-balanced discretization presented in the previous section. 
The midpoint for $\Ql_i$  is chosen as follows:
\bel{SourceLF1}
\Ql_i = Q(U_i, x_i, t)  = 
 \Bigg(
\begin{array}{cccc}
- {\del_t a \over a} \rho_j \Big( 1 + 3 \eps^2 k^2 + (1-\eps^2 k^2) \eps^2 (u_i)^2 \Big) 
\\
-2 k^2 \rho_j \, 
{\del_t a \over a}  (1+ \eps^2 k^2) u_i
\end{array}
\Bigg).
\ee

To increase the accuracy in the numerical experiments, we use the piecewise linear reconstructions in space and a fourth-order Runge-Kutta solver in time.

  
\section{Global dynamics on a future-expanding background}
\label{thesection:5}

\subsection{Flows on a spatially homogeneous background in one space dimension} 

We present several numerical examples for the cosmological fluid equations \eqref{equa-27ab} in one space dimension. 
We assume $a(t) = t^\kappa$ and we begin with a uniform geometry $b(x) \equiv 1$, so that the source term reads  
\be
\aligned
& S_0 = - {\kappa \over t} \rho \Big( 1 + 3 \eps^2 k^2 + (1 - \eps^2 k^2) \eps^2 u^2 \Big), 
\qquad
S_1 =  -2 \rho \, {\kappa \over t}  (1+ \eps^2 k^2) u. 
\endaligned
\ee

\paragraph*{Test 1: Initial density with two constant states.}

We choose the following initial data posed at $t_0 = 1$ and defined in the domain $[0,1]$: 
\be
\big(\rho_0(x), u_0(x)\big) =  \begin{cases} (1, 0), \quad 
& \qquad   0 \leq x \leq 0.5, 
\\
(0.9, 0), &  \qquad 0.5 < x \leq 1. 
\end{cases}
\ee

This is a single jump discontinuity and we solve the initial value problem numerically with a periodic boundary condition. We choose here the exponent $\kappa =2$, and $t \in [1, +\infty)$, and the sound speed $k= 0.7$, and the light speed to be a unit. We denote by $N$ the total number of grid cells in space. 

In the first numerical result,  $N= 5000$ is chosen in order have a very fine grid. 
We view this solution as the ``reference solution". 
We compute the solution using $N = 100$ uniformly placed grid cells and compare it with the
reference solution and CFL $= 0.3$. The numerical solution obtained by using the standard HLL scheme at several order of accuracy: first-order in time and first-order in space ($1T1S$),
fourth-order in time and second-order in space ($4T2S$) at $t = 1.1$, which are presented in Figures~\ref{Fig: Euler-HLL-ex1}. We observe that the fourth-order in time and second-order in space
discretization significantly provides a better accuracy for the solution.

In Figure~\ref{Fig: Euler-HLL-ex10}, we plot the solutions for $N = 50, 100, 200, 400$ at $t = 1.1$, respectively. The results demonstrate that the approximate solutions approach the reference solution as 
$N$ increases.


\begin{figure}[htbp]
\centering
\epsfig{figure = 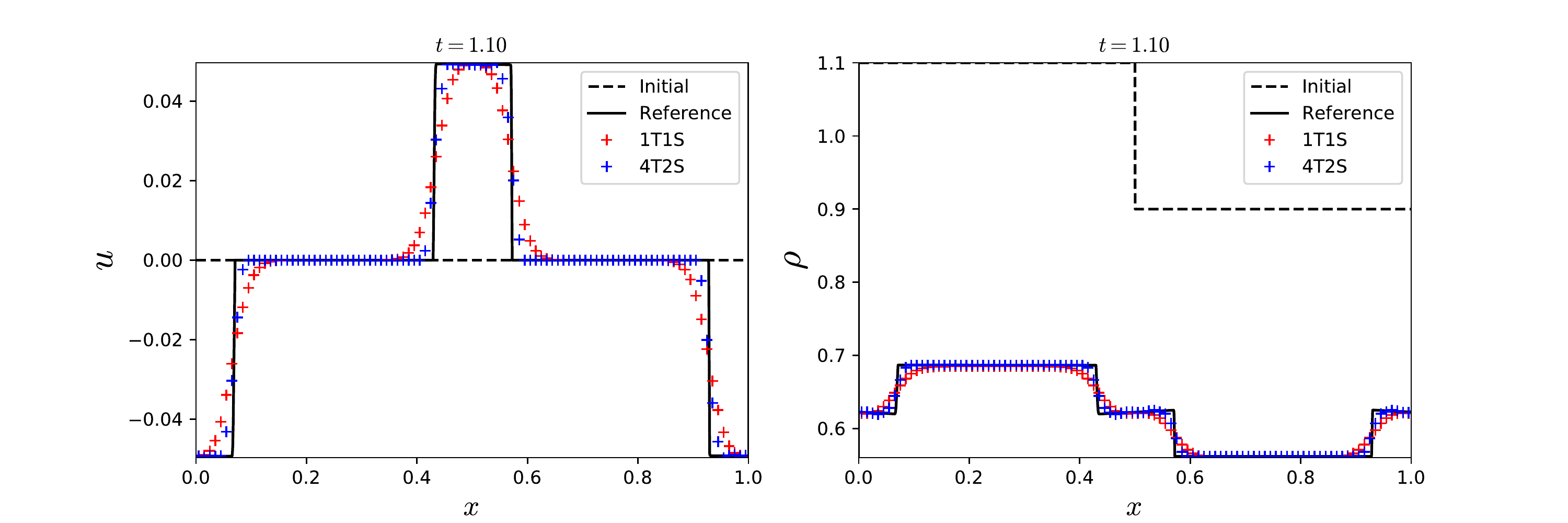,height = 2.2 in}  
\caption{First-order in time and first-order in space ($1T1S$) compared to fourth-order in time and second-order in space ($4T2S$).}
\label{Fig: Euler-HLL-ex1}
\end{figure}


\begin{figure}[htbp]
\centering
\epsfig{figure = 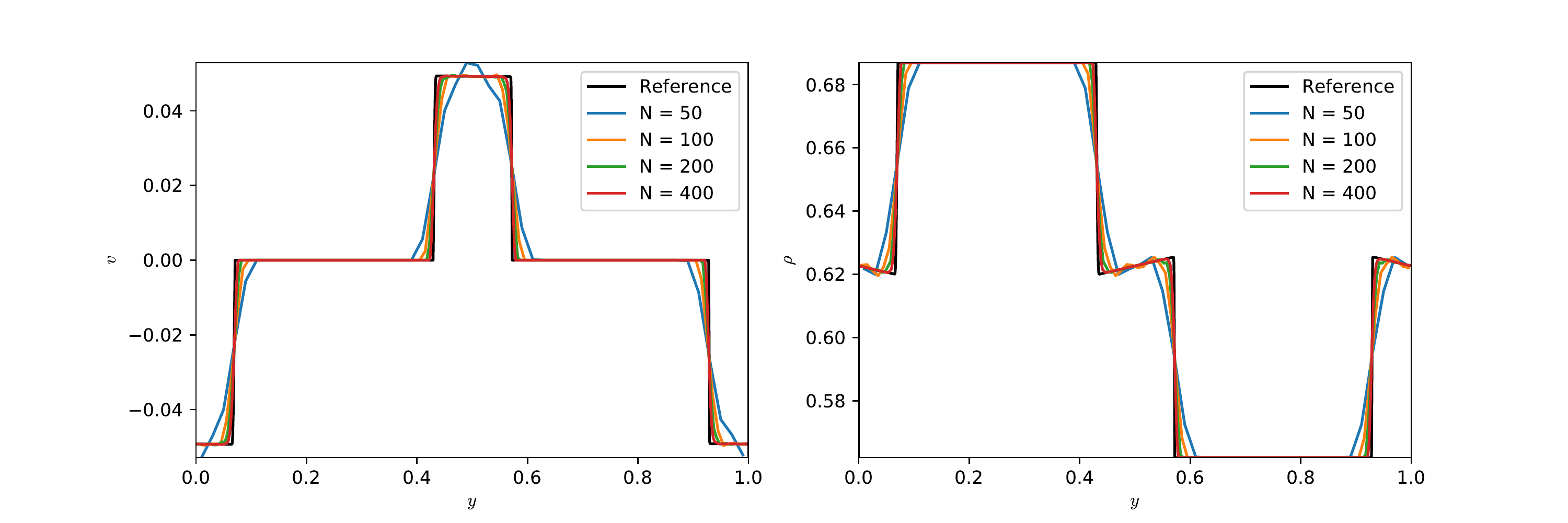,height = 2.2 in}  
\caption{Fourth-order in time and second-order in space scheme for different space grid cells.}
\label{Fig: Euler-HLL-ex10}
\end{figure}

\paragraph*{Test 2: Initial density with oscillations.} We now choose the initial data to be
\be
u_0(x)=0, \qquad  \rho_0(x) = 1+ \sin\Big({6\over7} \pi x\Big) \cos\Big({7\over 2} \pi x\Big),
\ee
which has a variable density $\rho$ and a vanishing velocity. In this test, the exponent is $\kappa =2$,  light speed is chosen to be unit, the sound speed is $k = 0.5$ with CFL $= 0.3$. 

The evolution of the solution $u$ and $\rho$ as $t$ increases is shown in Figures~\ref{fig:exp1} to \ref{fig:exp2}, where we use $N =500$.
We observe that the solution $\rho \to 0$ and $u \to 0$ as $t$ increases. 
Moreover, the figures show that initially the solution $u$ evolves from the initial data in to a sawtooth wave, 
which is a piecewise linear function. This transition happens on a relatively short scale.  Then, the waves interact until there are only two N-waves left, that structure preserves for a very long time.

\begin{figure}
\centering 
\begin{minipage}{0.5\linewidth}
  \centerline{\includegraphics[width=5.6in]{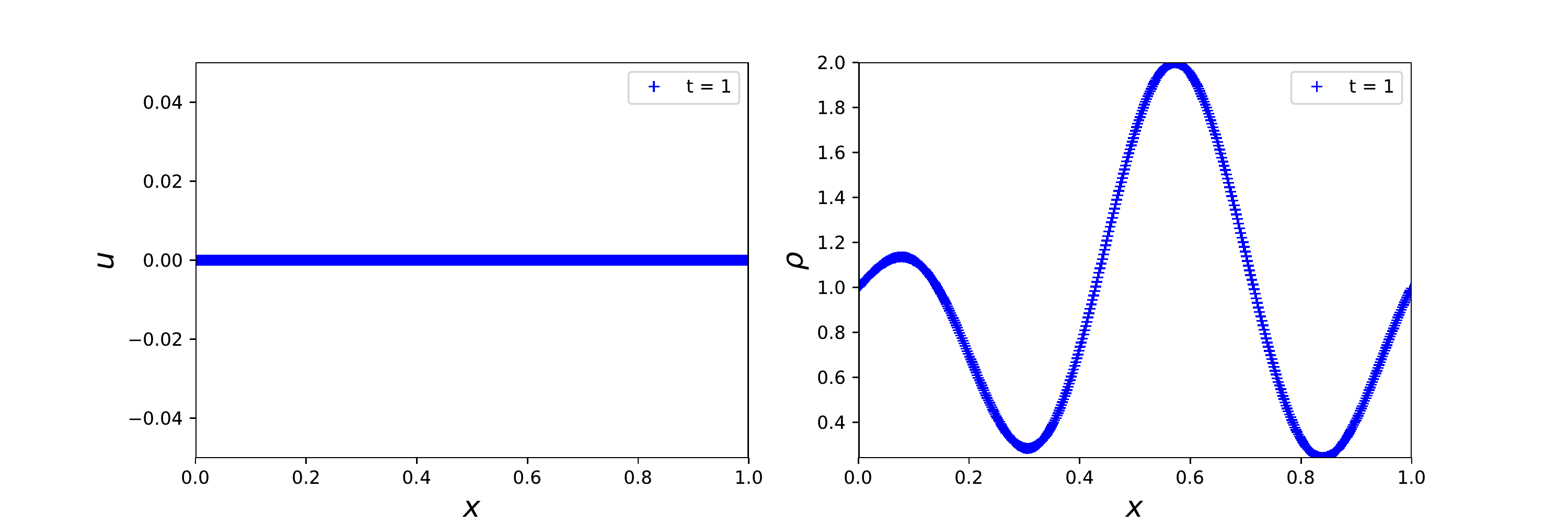}} 
\end{minipage}
\vfill
\begin{minipage}{.5\linewidth}
  \centerline{\includegraphics[width=5.6in]{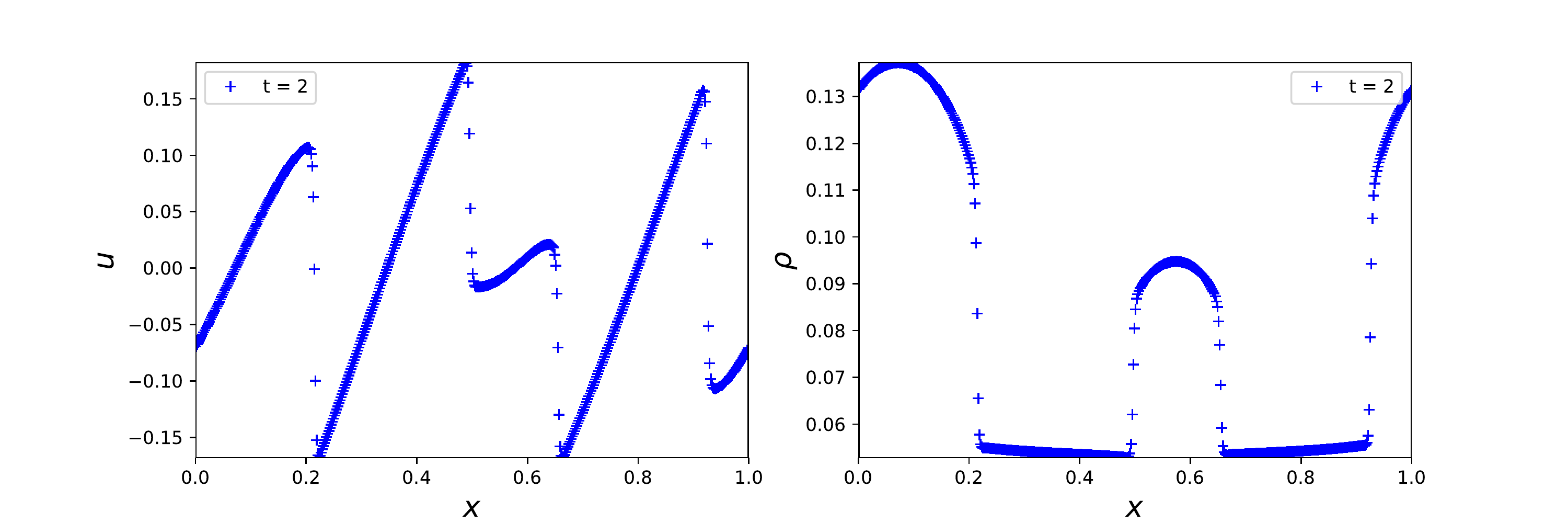}} 
\end{minipage}

\vfill
\begin{minipage}{.5\linewidth}
  \centerline{\includegraphics[width=5.6in]{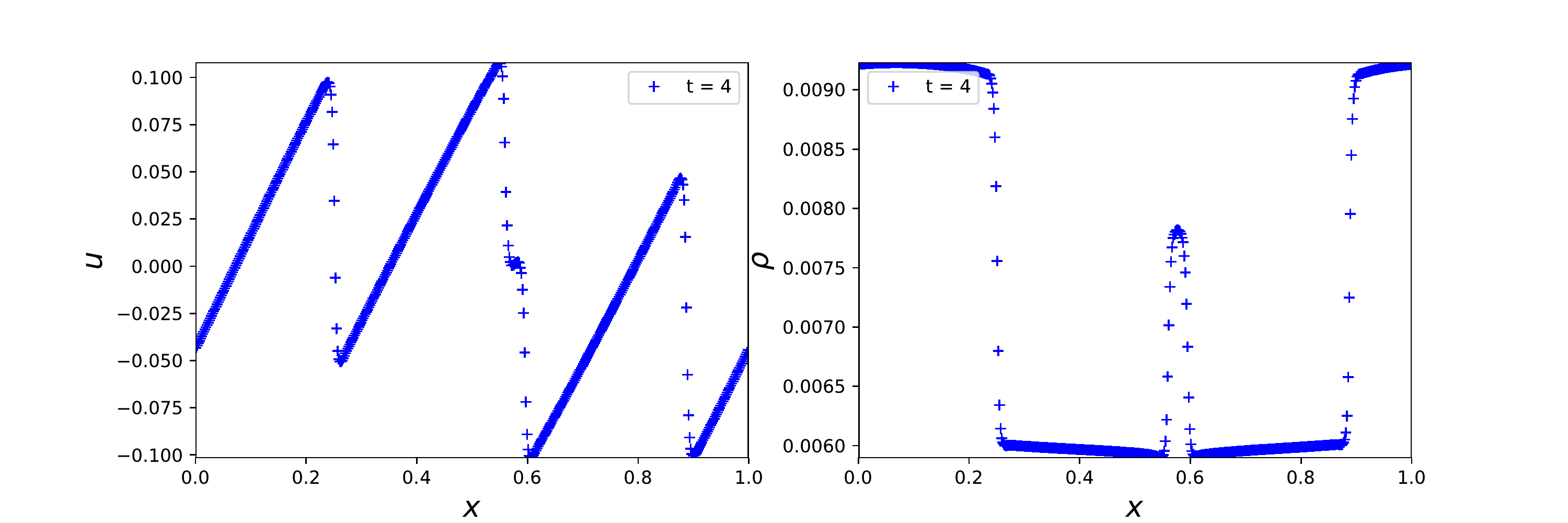}} 
\end{minipage}
\vfill
\begin{minipage}{.5\linewidth}
  \centerline{\includegraphics[width=5.6in]{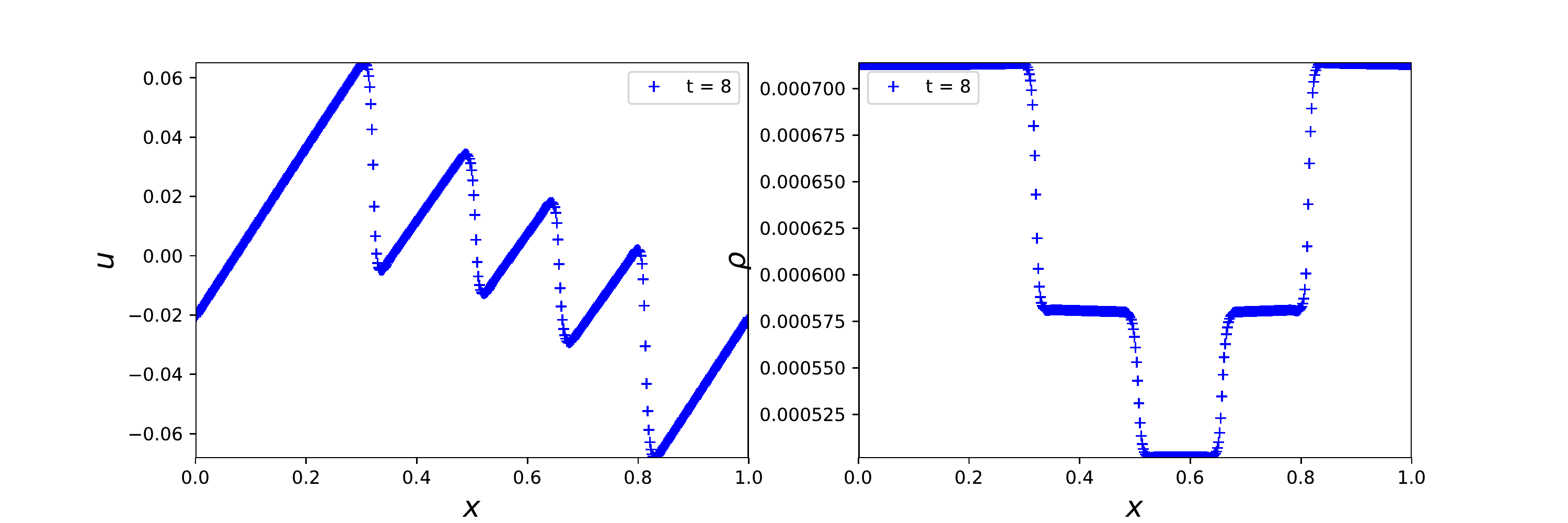}} 
\end{minipage}

\caption{The evolution of solution $u$ and $\rho$ as $t$ increases on an expanding background.}
\label{fig:exp1}
\end{figure}

\begin{figure}
\centering 
\begin{minipage}{0.5\linewidth}
  \centerline{\includegraphics[width=5.6in]{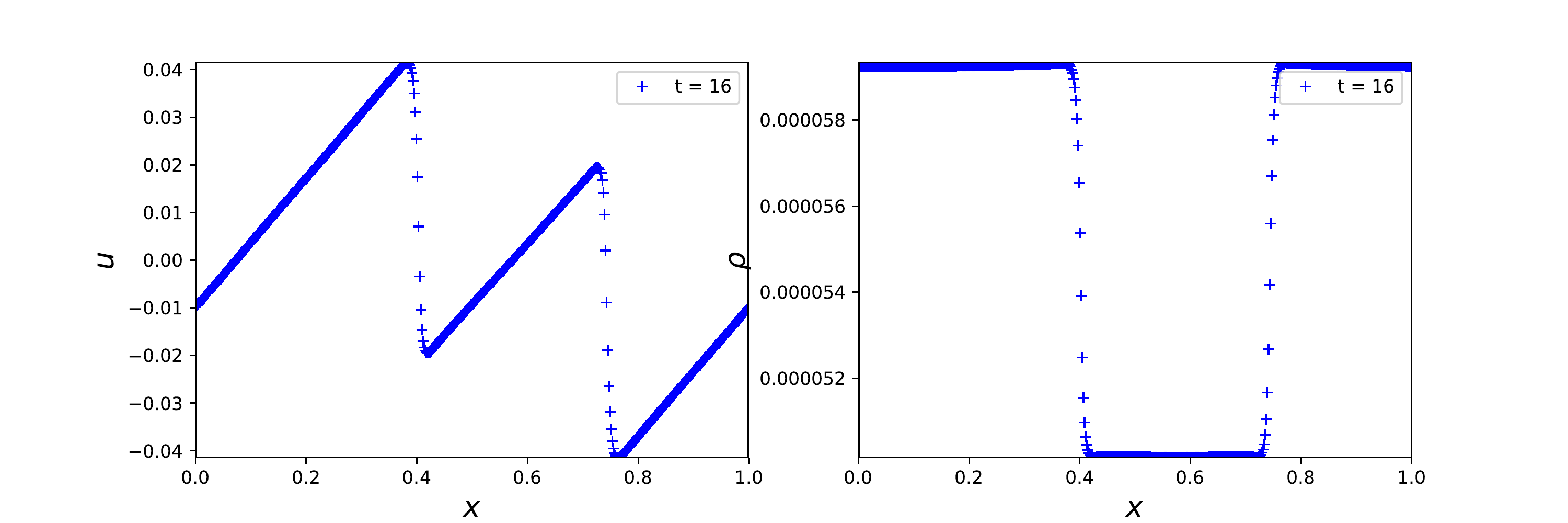}} 
\end{minipage}
\vfill
\begin{minipage}{.5\linewidth}
  \centerline{\includegraphics[width=5.6in]{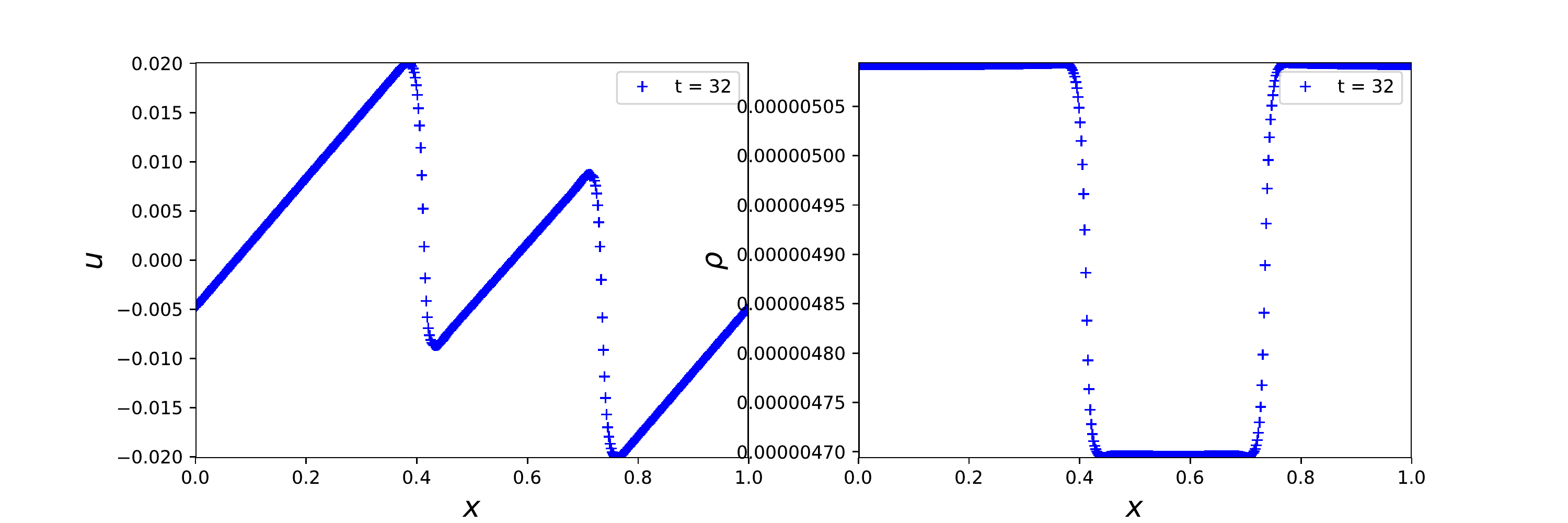}} 
\end{minipage}

\vfill
\begin{minipage}{.5\linewidth}
  \centerline{\includegraphics[width=5.6in]{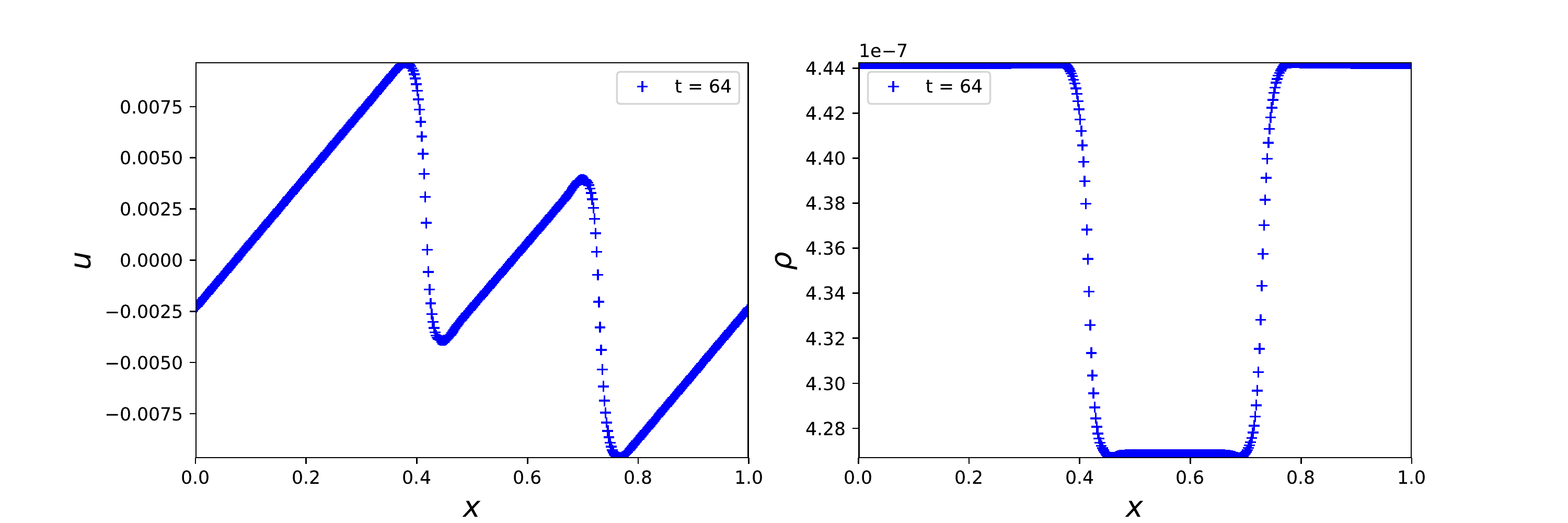}} 
\end{minipage} 

\caption{The evolution of solution $u$ and $\rho$ as $t$ increases on an expanding background.}
\label{fig:exp2}
\end{figure}


\paragraph{Rescaling the numerical solution}

We now display the rescaled solution $\ut$ and $\rhot$ defined in \eqref{eq: Reu01}; see Figure~\ref{fig:exp2}. We observe that the asymptotic solution only contains two linear pieces with two jumps,
and eventually converges to a constant.

\begin{figure}
\centering 
\begin{minipage}{0.5\linewidth}
  \centerline{\includegraphics[width=5.6in]{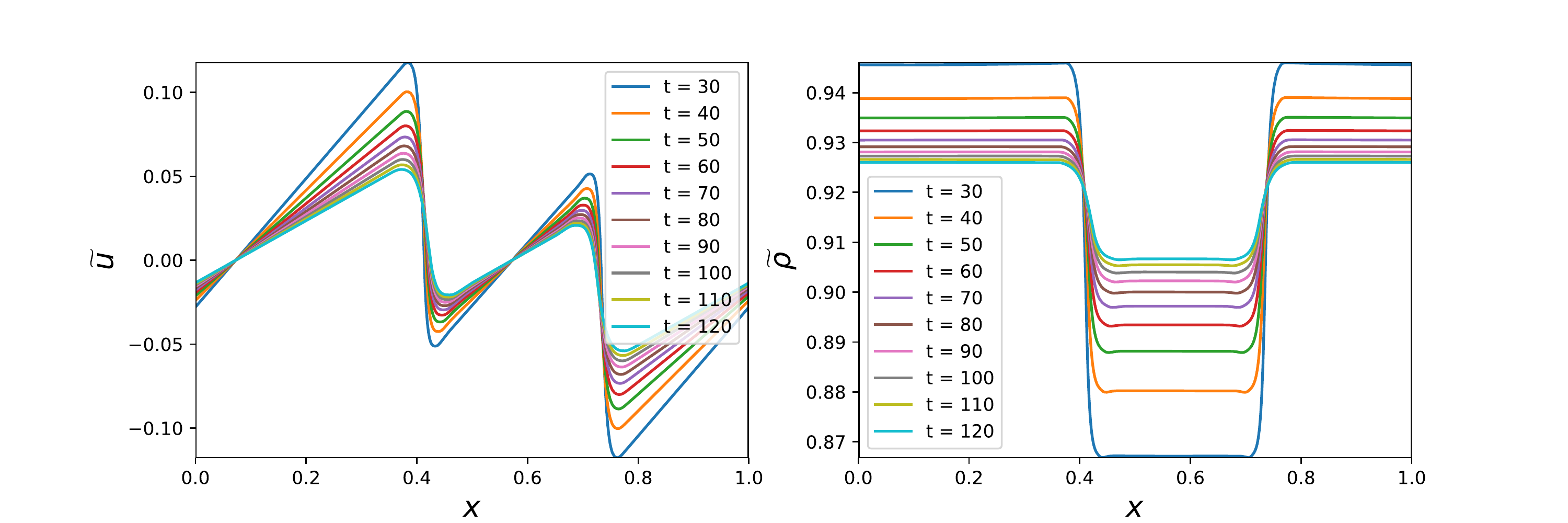}} 
\end{minipage}
\vfill
\begin{minipage}{.5\linewidth}
  \centerline{\includegraphics[width=5.6in]{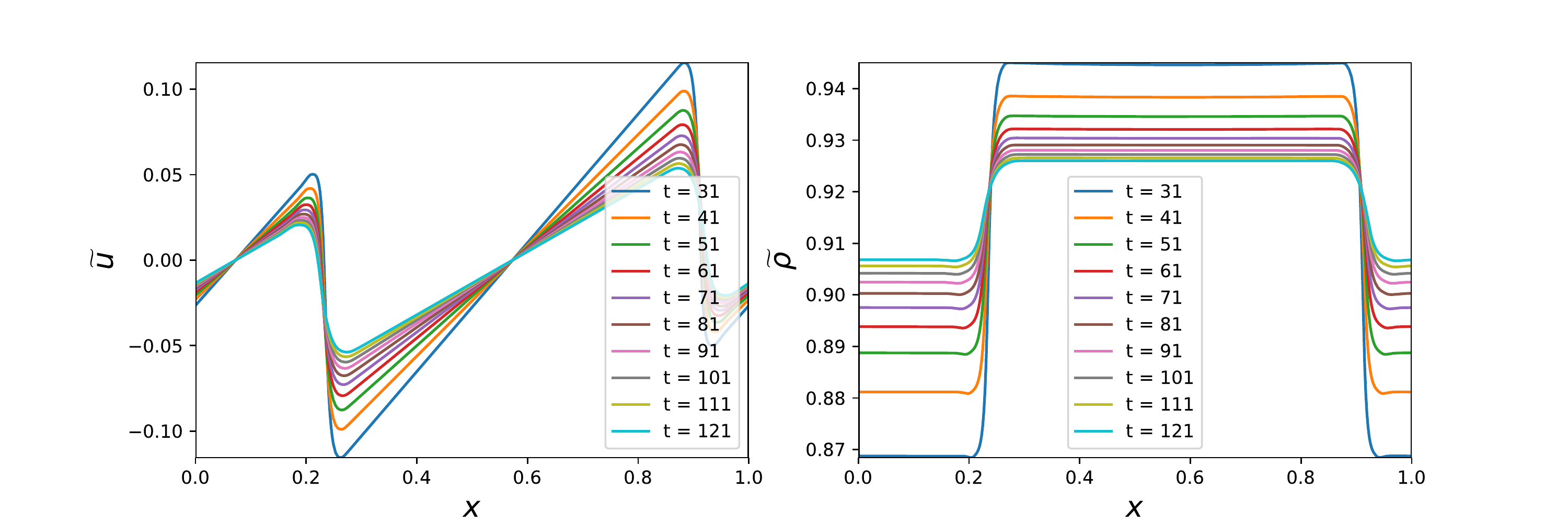}} 
\end{minipage} 

\vfill
\begin{minipage}{.5\linewidth}
  \centerline{\includegraphics[width=5.6in]{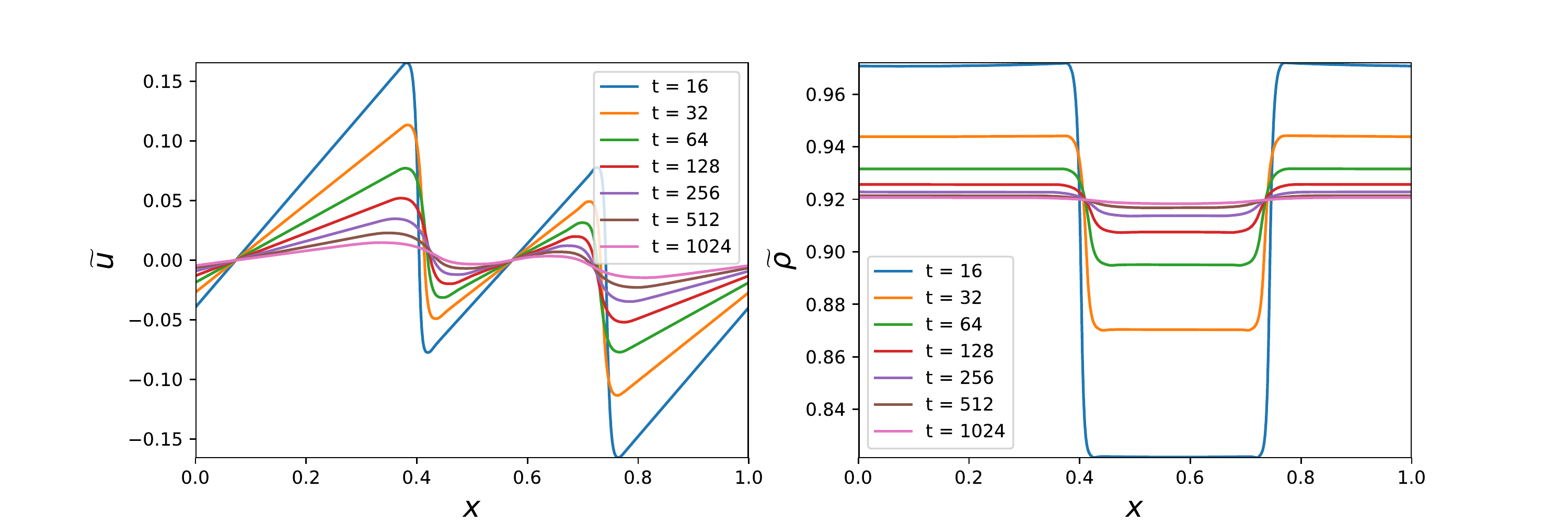}} 
\end{minipage} 

\caption{Rescaled solution on an expanding background with $k = 0.5$.}
\label{fig:exp2}
\end{figure}


\subsection{Flows on a spatially homogeneous background in two space dimensions} 

The proposed algorithm in one space dimension is applied direction by direction on a Cartesian mesh. We checked that our two-dimensional code is ``consistent'' with the results provided in one dimension, and in the typical test chosen above a very similar asymptotic rescaled density is recovered. We then performed genuinely two-dimensional tests, as now presented. 

Similar to the one-dimensional tests we assume that $a(t) = t^\kappa$ and we begin with a uniform geometry $b(x,y) \equiv 1$, so that the source term reads  
\bel{}
\aligned
& S_0 = - {\kappa \over t} \rho \Big( 1 + 3 \eps^2 k^2 + (1 - \eps^2 k^2) \eps^2 V^2 \Big), 
\qquad
 S_1 =  -2 \rho  {\kappa \over t}  (1+ \eps^2 k^2) u,
\qquad
 S_2 = -2 \rho {\kappa \over t}  (1+ \eps^2 k^2) v. 
\endaligned
\ee

\paragraph*{Test 1: Symmetrical initial data.}

We choose the following initial data posed at $t_0 = 1$ and defined in the domain $[0,1]\times[0,1]$: 
\be
\rho_0(x,y) = 0.1 + 0.1 e^{-20 {(x - 0.5)}^2 - 20 {(y - 0.5)}^2}, \qquad u_0(x,y) = 0, \qquad v_0(x,y) = 0.
\ee

\begin{figure}[htbp]
\centering
 {\includegraphics[height=2.1in]{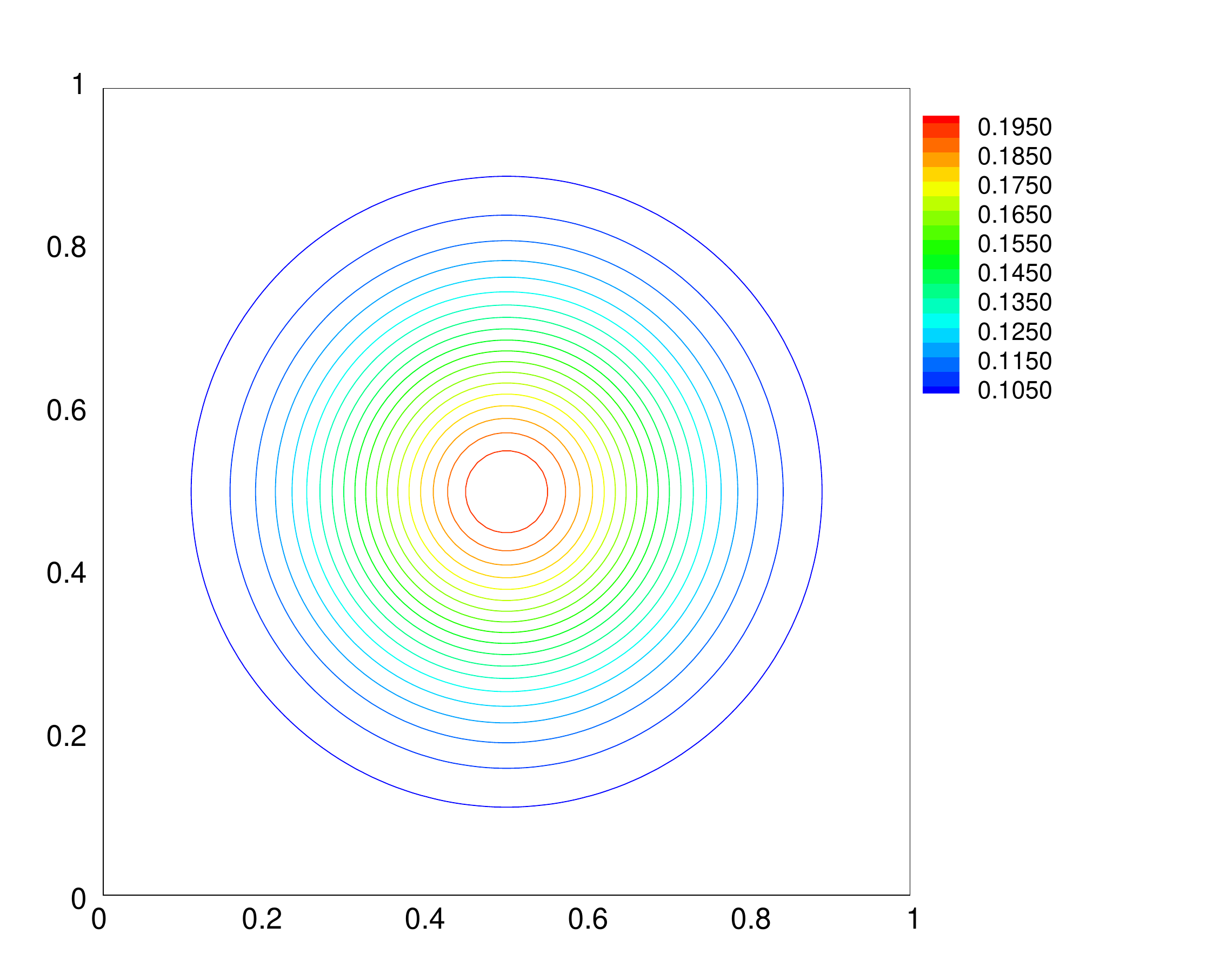}}
    
\caption{Initial data for the spatially homogenous background in two space dimensions.}
\label{2d-initial1}
\end{figure}

We choose this initial data (Figure~\ref{2d-initial1}) to be able to observe that the solution preserves its symmetry. In addition, in all two-dimensional tests from here, the exponent $\kappa =2$, the sound speed $k= 0.5$, CFL $= 0.5$, the light speed to be a unit, and the grid is $[100\times100]$. For the expanding test cases $t \in [1, +\infty)$ is chosen. Furthermore, we solve the 2-D system with a third-order strong stability preserving (SSP) Runge-Kutta solver.

In Figure~\ref{a(t)-ex}, we plot the rescaled solution $\rhot$ and velocity magnitude $V$ at $t=8$, $16$, $50$, $60$, respectively. The results which are obtained by the standard HLL scheme demonstrate that the solution $\rho \to 0$ and $V \to 0$ as $t$ increases. The rescaled solution $\rhot$ also converges to a periodically constant state. 

\begin{figure}[htbp]
\centering
 {\includegraphics[height=2.1in]{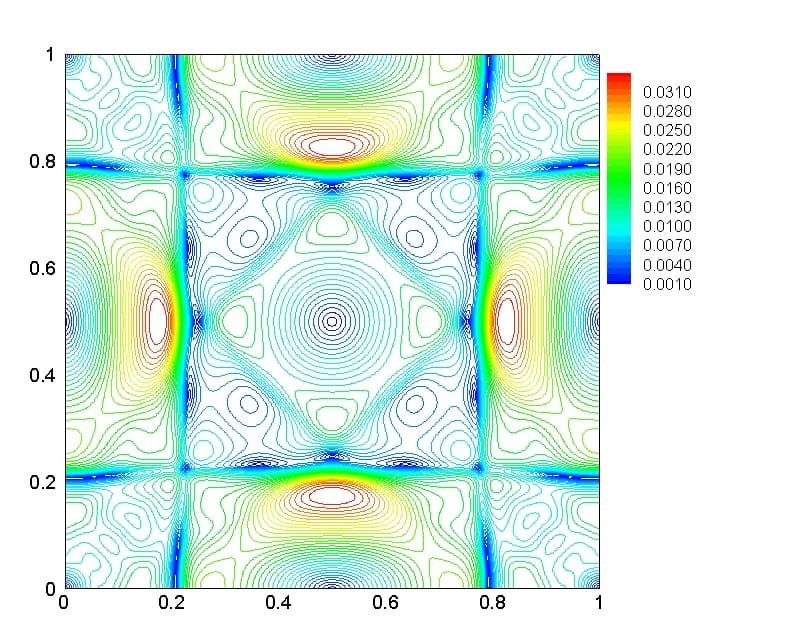}}
 {\includegraphics[height=2.1in]{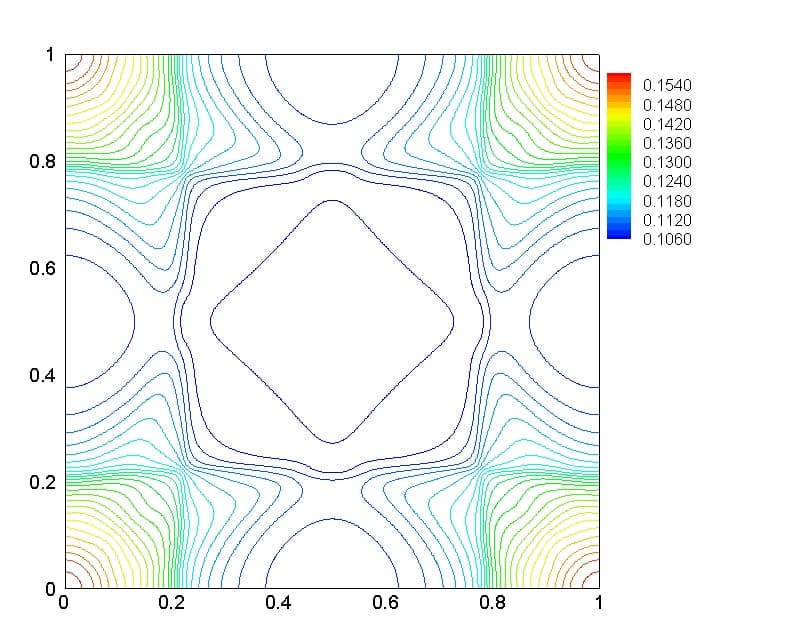}} \\
  
\centering
{\includegraphics[height=2.1in]{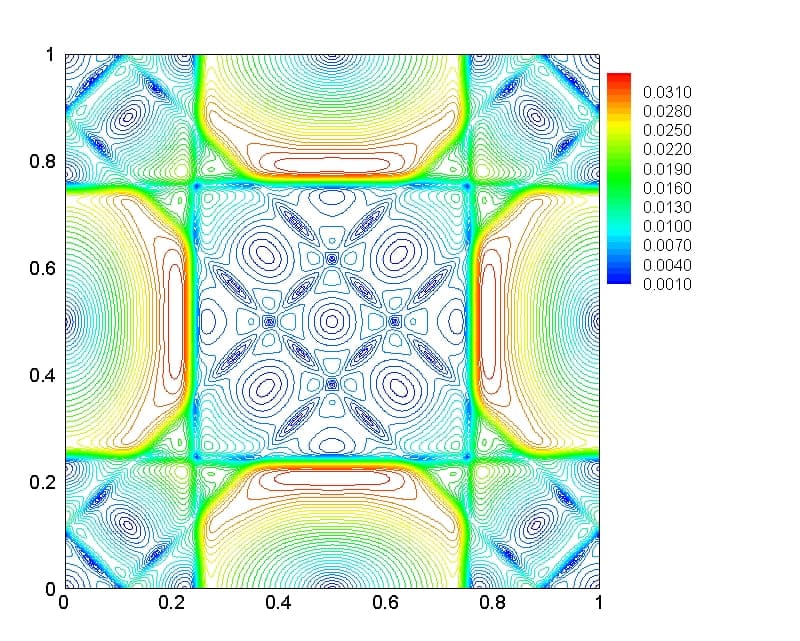}}
 {\includegraphics[height=2.1in]{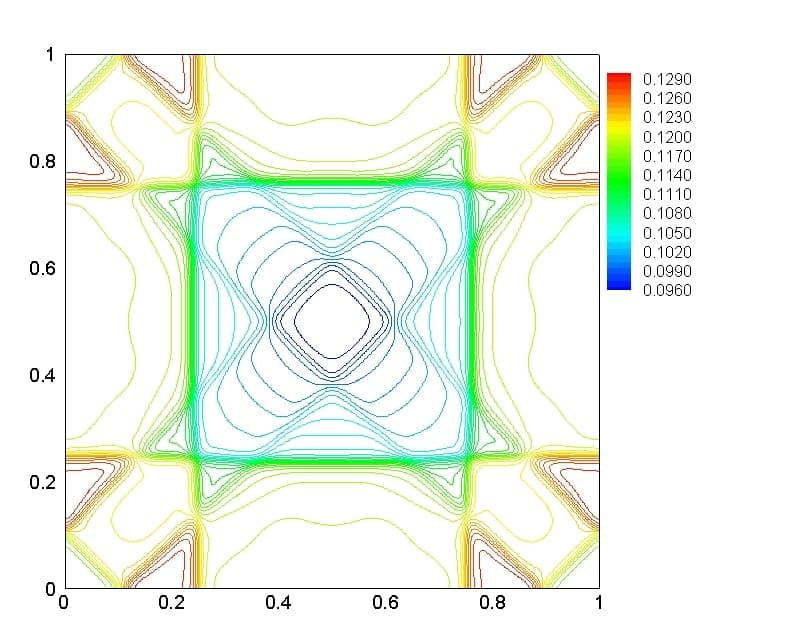}} \\

\centering
{\includegraphics[height=2.1in]{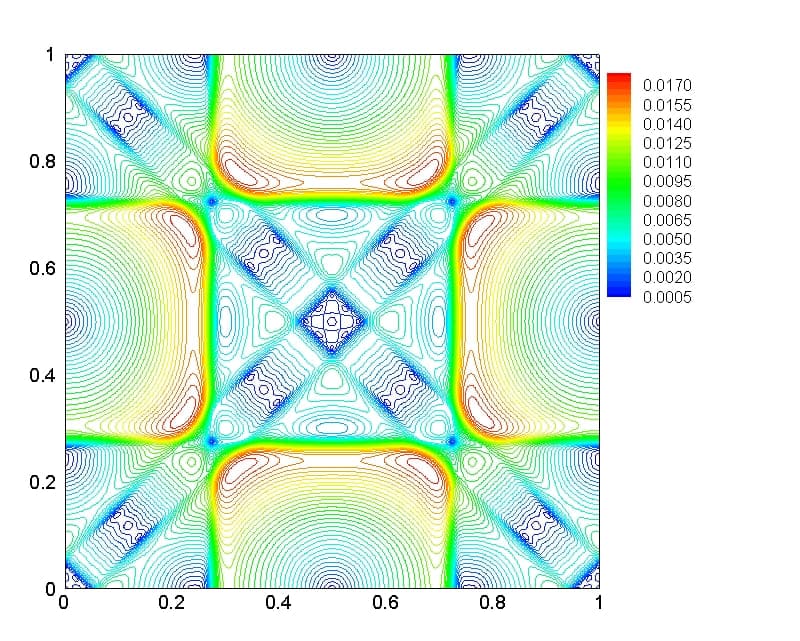}}
 {\includegraphics[height=2.1in]{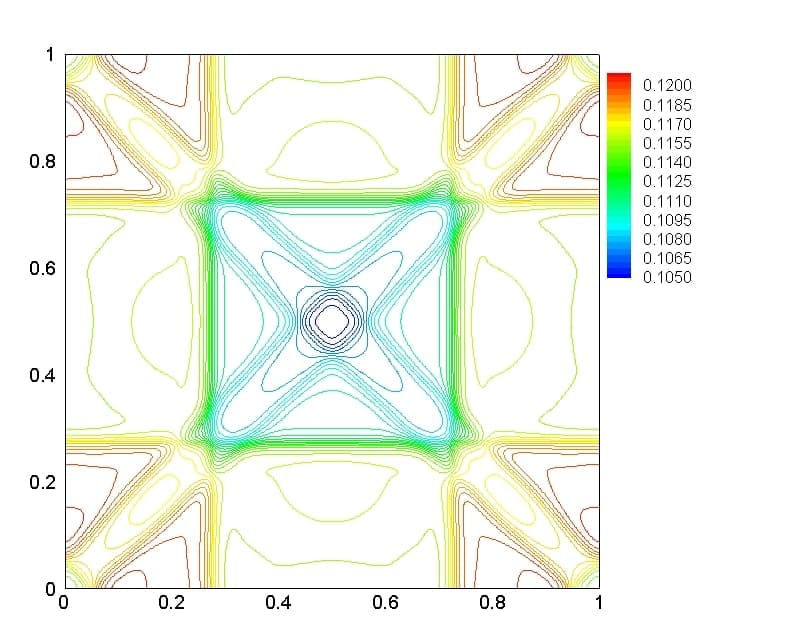}} \\

\centering
{\includegraphics[height=2.1in]{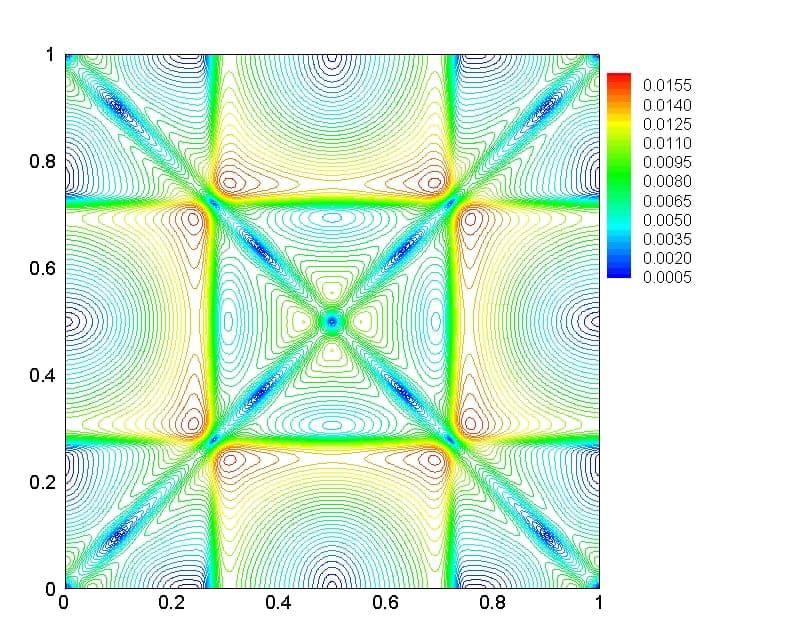}}
 {\includegraphics[height=2.1in]{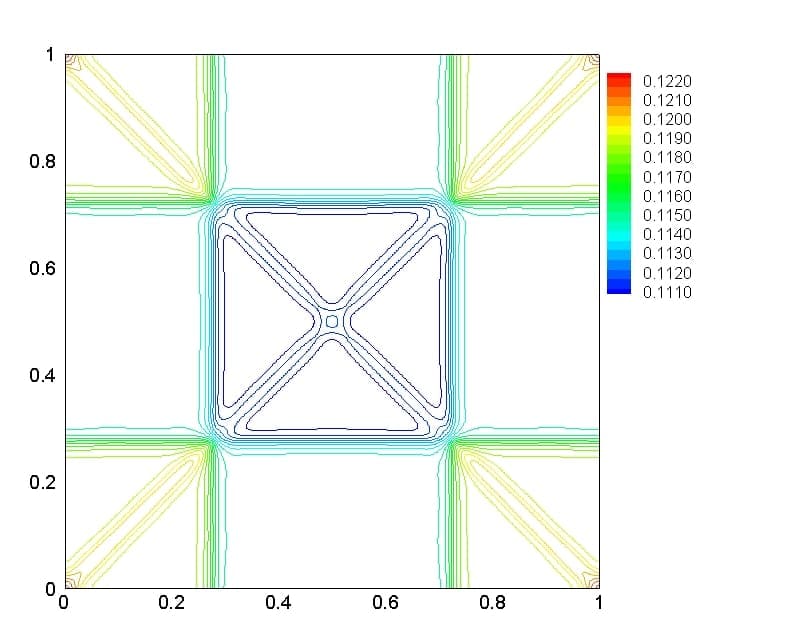}} 
    
\caption{Solutions of the 2-D spatially homogeneous system in the expanding background at $t=8$, $16$, $50$, $60$. Right column: Rescaled solution $\rhot$. Left column: Velocity magnitude $V$.}
\label{a(t)-ex}
\end{figure}


\subsection{Flows on an non-homogeneous background in one space dimension} 

We demonstrate here that the asymptotics of the solutions at the fine scale level is driven by the underlying background geometry.

\paragraph*{Test 1: Stationary initial data.}  We first validate  the well-balanced property of the  scheme, that is, it can preserve and capture smooth steady solutions of  the Euler-FLRW model  when $a(t)\equiv 1$. 
We first consider the special steady state solutions that is
\bel{eq: sssv000}
u= u(x) \equiv 0, \qquad \rho = \rho(x) = C b^2(x),
\ee
where $C$ is a constant. We use different schemes to show that the modified HLL scheme is  well-balanced.
In this test, we choose the initial data to be
\bel{eq: sssv000}
u_0=  0, \qquad \rho_0(x) = b^2(x).
\ee
at $t = 1$,
and the  function $b(x)$ to be
\bse
\bel{eq:bxa}
b(x) = 1+ 0.01 \sin(2 \pi x),
\ee 
\bel{eq:bxb}
b(x) = 1+ 0.01 \big(\sin(6 \pi x) + \cos(2\pi x)\big),
\ee
\ese
respectively.
We take $N =100$, $\eps =1$, $k =0.5$ and  $CFL = 0.6$.  We plot the solution by using our proposed HLL scheme and the standard HLL scheme when $t = 10$ (see Figure~\ref{Fig: Euler-WBHLL-EX0}-\ref{Fig: Euler-WBHLL-EX1}).
We observe that the modified HLL scheme is able to exactly preserve the steady state solution while the standard HLL scheme cannot preserve it. Note that oscillations appear for the solution of the velocity $u$.

\begin{figure}[htbp]
\centering
\epsfig{figure = 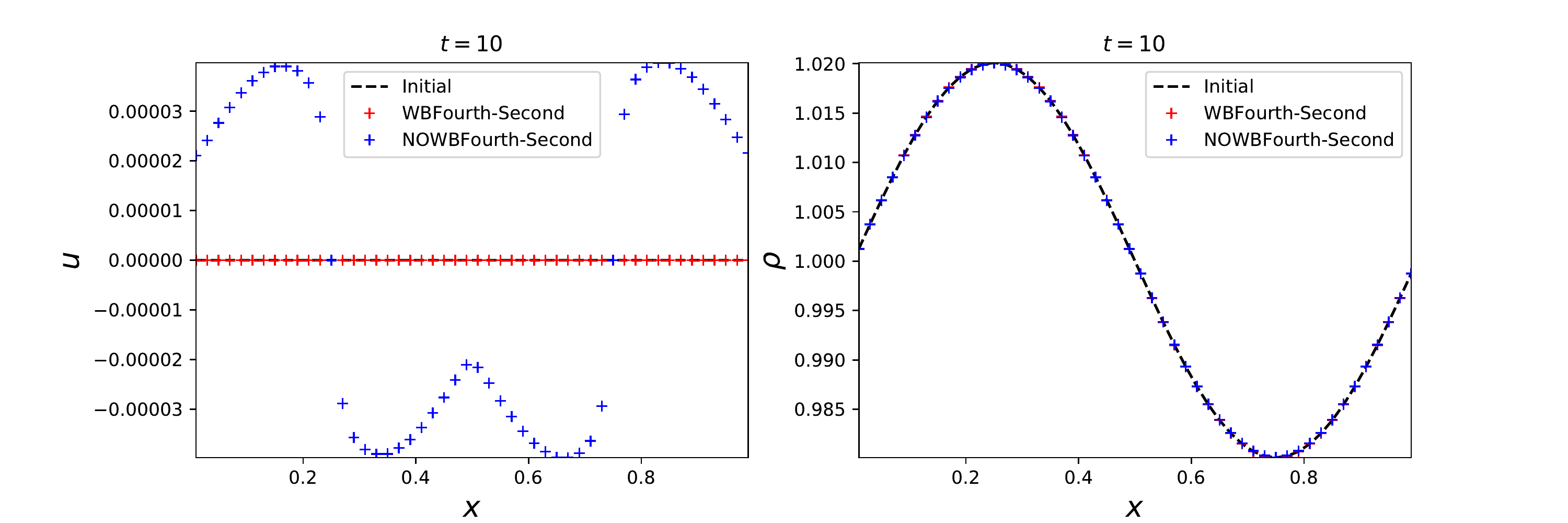,height = 2.1 in}  
\caption{The solution on an expanding background at $t = 10$ with  \eqref{eq:bxa}.}
\label{Fig: Euler-WBHLL-EX0}
\end{figure}

\begin{figure}[htbp]
\centering
\epsfig{figure = 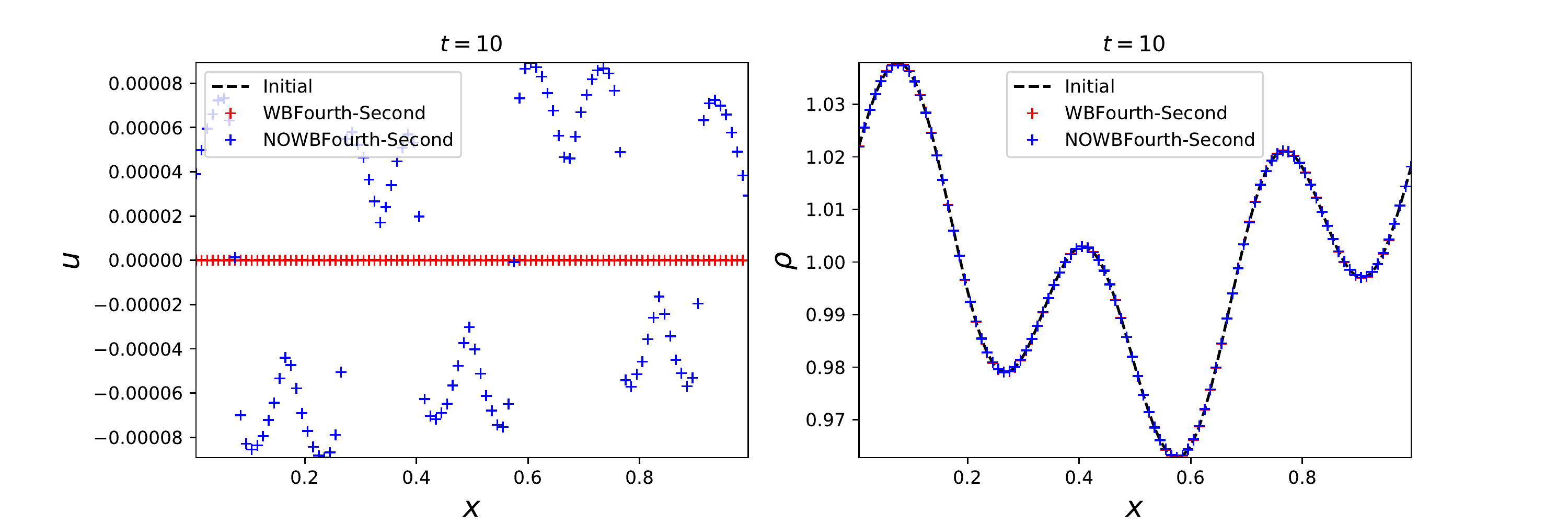,height = 2.1 in}  
\caption{The solution on an expanding background at $t = 10$ with \eqref{eq:bxb}.}
\label{Fig: Euler-WBHLL-EX1}
\end{figure}

\paragraph*{Test 2: Perturbed initial data.} 
In this test,  we choose the following initial data
\bel{eq: sssv11}
u_0=  0, \qquad \rho_0(x) =  \begin{cases} b^2(x) + 0.02 \cos(30 \pi x), \quad 
& \qquad   0.2 \leq x \leq 0.7, 
\\
b^2(x), &  \qquad \text{otherwise},
\end{cases}
\ee
which has a perturbed density and a vanishing velocity, where $b(x)$ is given by \eqref{eq:bxb}.  The initial data and the solution at $t =10$ are plotted in the Figure~\ref{fig:expand}. We observe that even if the initial data is not 
a steady state solution, the solution converges to the steady state one. This shows the ability of the scheme to capture the steady state solutions.
\begin{figure}
\centering 
\begin{minipage}{0.5\linewidth}
  \centerline{\includegraphics[width=5.6in]{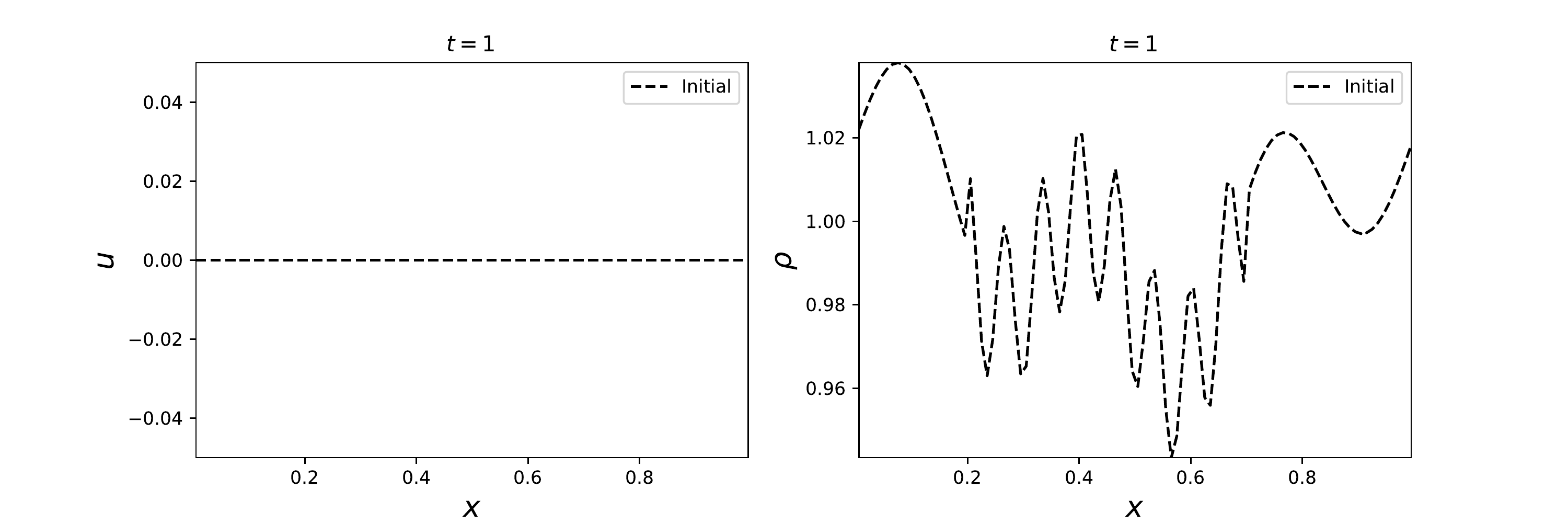}} 
\end{minipage}
\vfill
\begin{minipage}{.5\linewidth}
  \centerline{\includegraphics[width=5.6in]{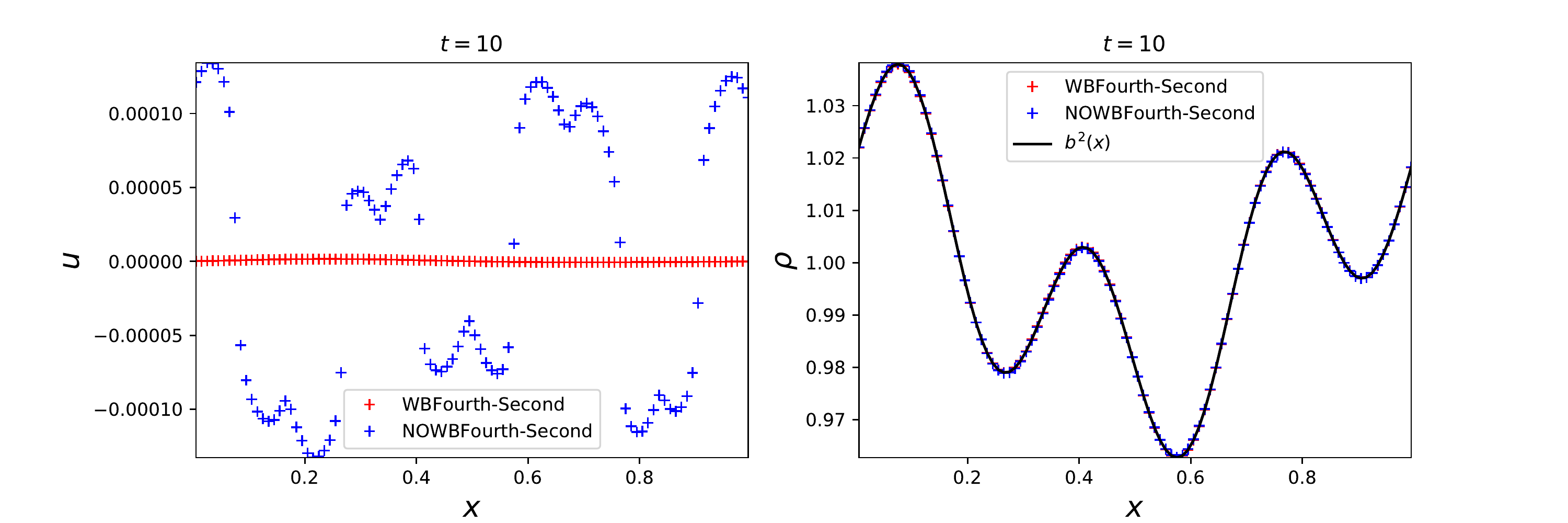}} 
\end{minipage}

\caption{The solution on an expanding background.}
\label{fig:expand}
\end{figure}

\paragraph*{Test 3: Perturbed initial data.} 
We now turn to  consider the Euler model when $a(t) \neq 1$. We take here $\kappa =2$,
and we choose the same initial data as \eqref{eq: sssv11}. We plot the solution when $t = 10$ and $t = 20$ (see Figure \ref{fig:expand111}). We find that the solution
$\rho \to 0$ and $u \to 0$, as $t$ increases.

\begin{figure}
\centering 
\begin{minipage}{0.5\linewidth}
  \centerline{\includegraphics[width=5.6in]{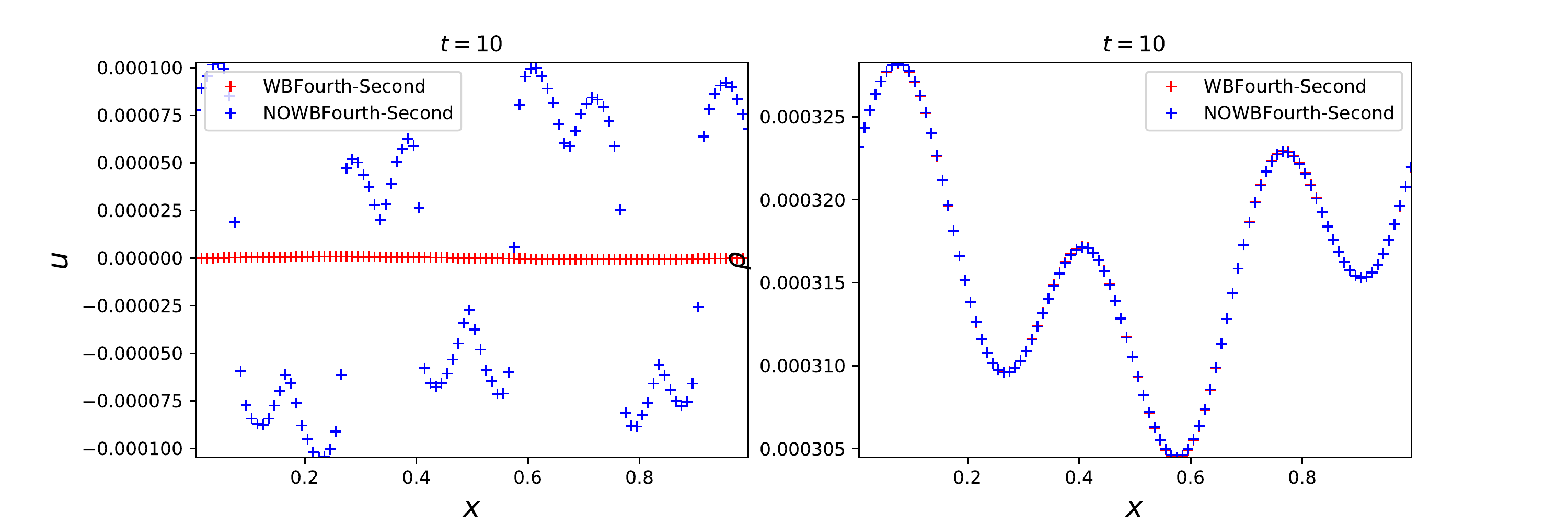}} 
\end{minipage}
\vfill
\begin{minipage}{.5\linewidth}
  \centerline{\includegraphics[width=5.6in]{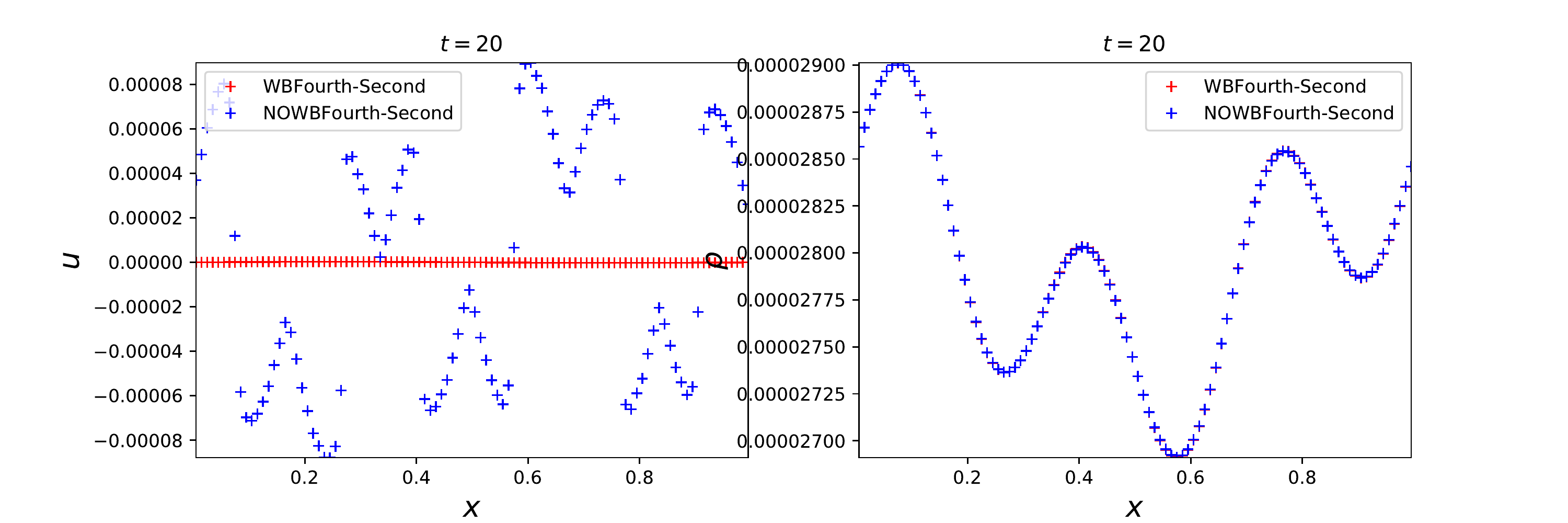}} 
\end{minipage}

\caption{Solution on an expanding background with $\kappa  =2$.}
\label{fig:expand111}
\end{figure}


\paragraph{Rescaling the numerical solution}

We now display the rescaled solution $\ut$ and $\rhot$ defined in \eqref{eq: Reu01}; see Figure~\ref{Fig: Euler-Wb-Res}. 
We observe that the asymptotic density converges to $b^2(x)$, while the rescaled velocity converges to $0$.

\begin{figure}[htbp]
\centering
\epsfig{figure = 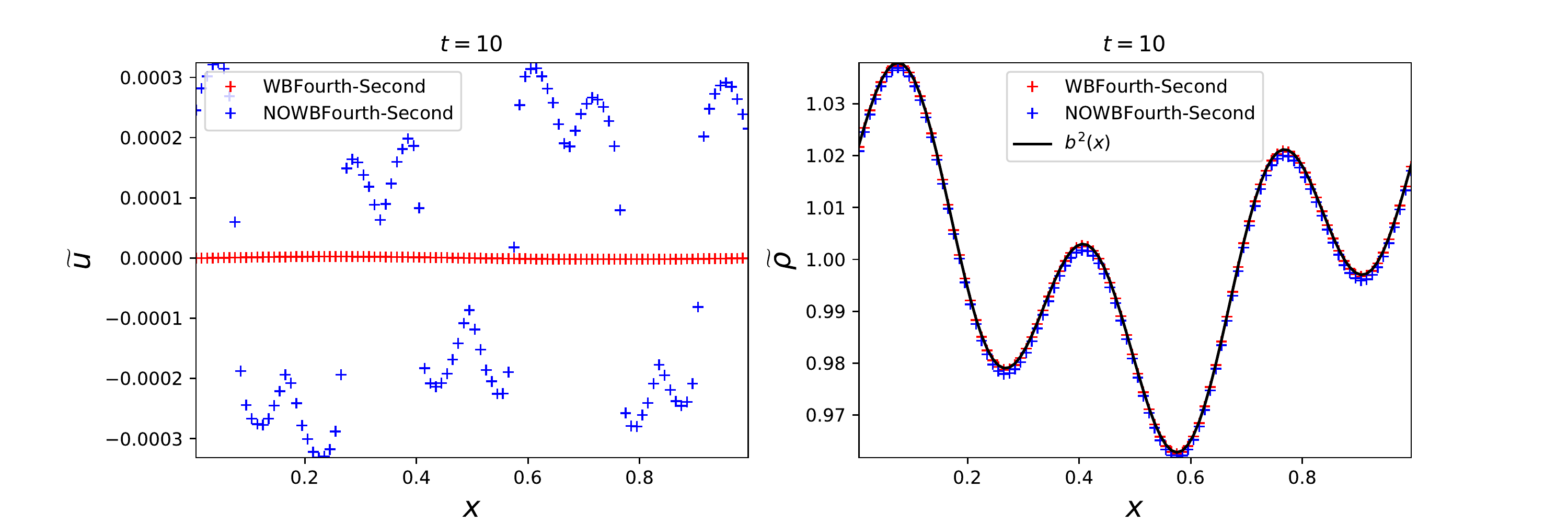,height = 2.1 in}  
\caption{Rescaled solution on an expanding background at $t = 10$.}
\label{Fig: Euler-Wb-Res}
\end{figure}



\subsection{Flows on an non-homogeneous background in two space dimensions} 

Similar to the one-dimensional tests, again we expect that the asymptotics of the solutions at the fine scale level is driven by the underlying background geometry which we now assume to be non-homogeneous in both spatial directions. 

\paragraph*{Test 1: Point symmetrical initial data with $\boldsymbol{a(t)\equiv 1}$.}

We choose the following initial data posed at $t_0 = 1$ and defined in the domain $[0,1]\times[0,1]$: 
\be
\rho_0 = 1 + 0.01 \big(\sin(2  \pi x)  \cos(2  \pi x) \sin(2  \pi y)  \cos(2  \pi y)\big), \qquad u_0(x,y) = 0, \qquad v_0(x,y) = 0,
\label{eq:initial2D}
\ee
which is shown in Figure~\ref{2d-initial2}. Moreover, the background geometry $b(x,y)$ is chosen as
\be
b(x,y) = 0.1 + 0.01 e^{-20 {(x - 0.5)}^2 - 20 {(y - 0.5)}^2}.
\label{bxy2d}
\ee

\begin{figure}[htbp]
\centering

 {\includegraphics[height=2.1in]{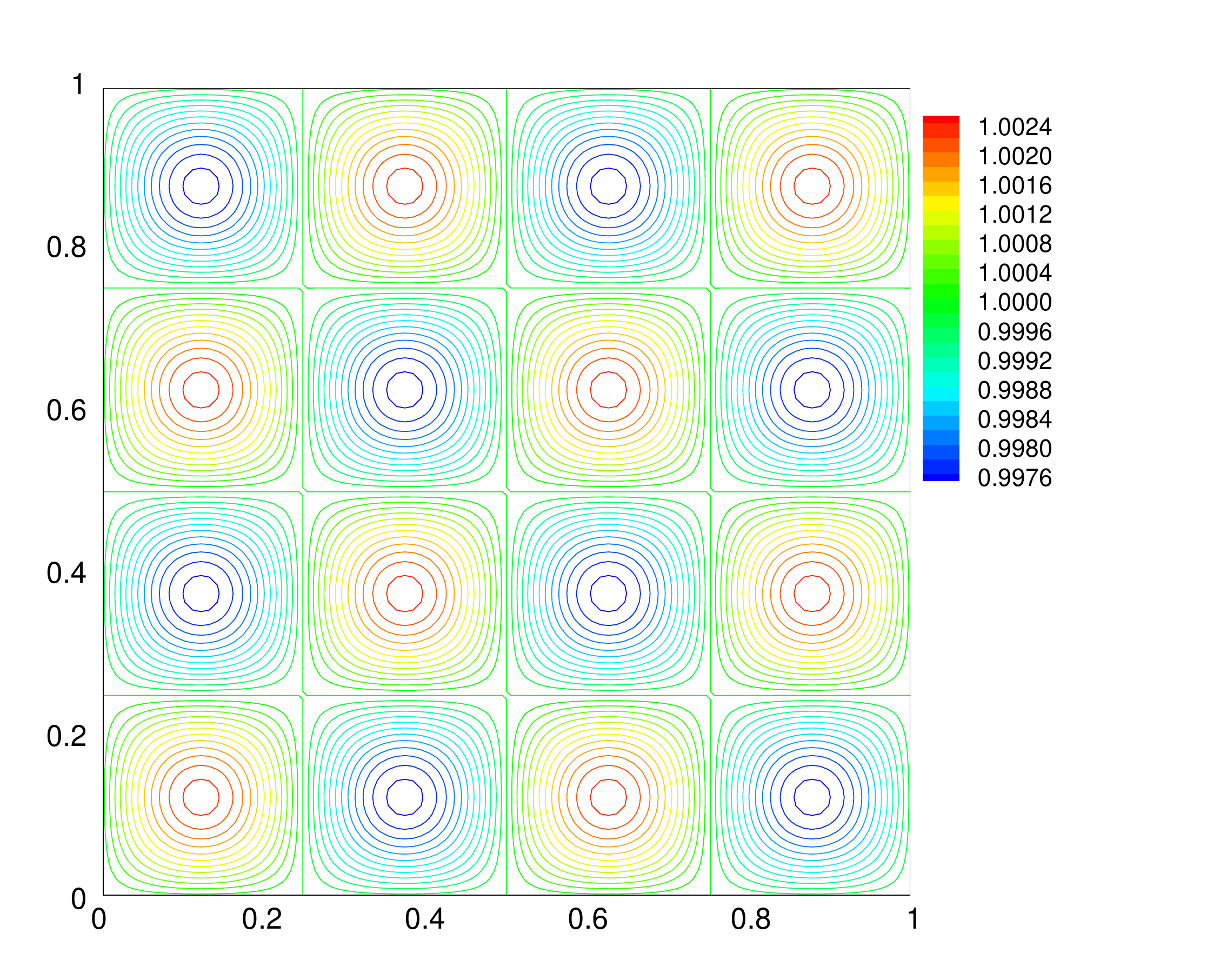}} 
   
\caption{Initial data in two space dimensions.}
\label{2d-initial2}
\end{figure}

The two-dimensional system is solved with the standard HLL scheme and our proposed one. In Figures~\ref{bxy-nwb} and~\ref{bxy-wb}, we plot the solution $\rho$ and velocity magnitude $V$ at $t=8$, $16$, $50$, $60$, respectively. The results of both schemes demonstrate that the solution $\rho \to Cb^2(x,y)$  (where $C$ is a positive constant) as $t$ increases. The solutions of the well-balanced scheme converge to the steady state solution drastically faster and show significant improvements in comparison to the non-well-balanced ones. This shows the importance of using the well-balanced scheme. 

\begin{figure}[htbp]
\centering
{\includegraphics[height=2.1in]{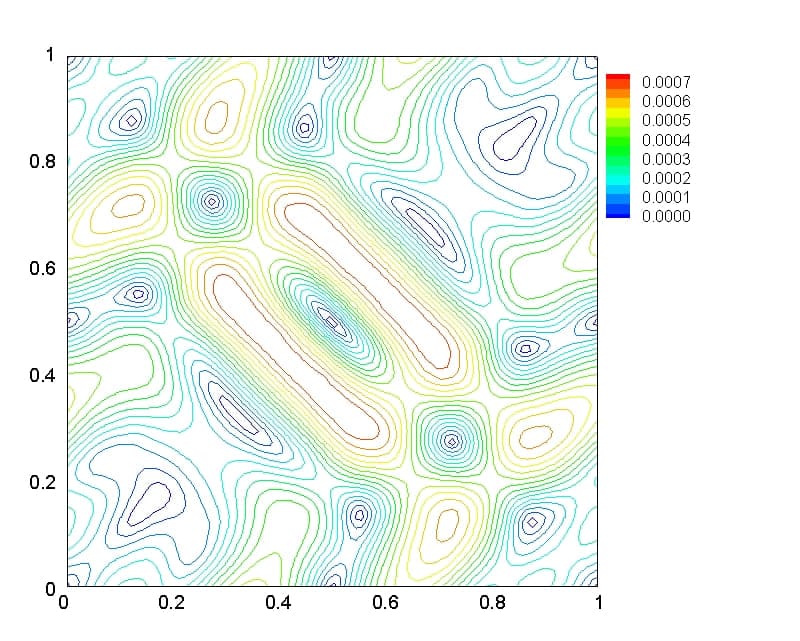}}
 {\includegraphics[height=2.1in]{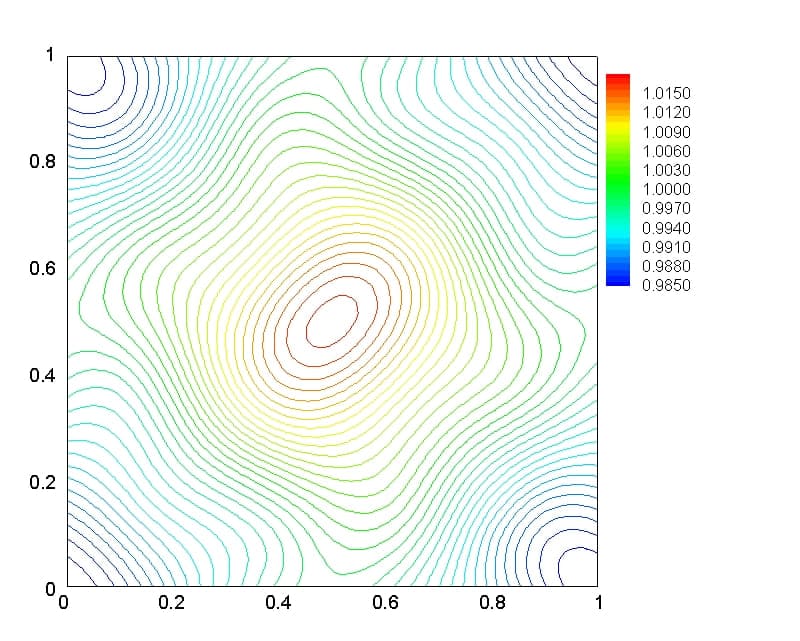}} \\
  
\centering
{\includegraphics[height=2.1in]{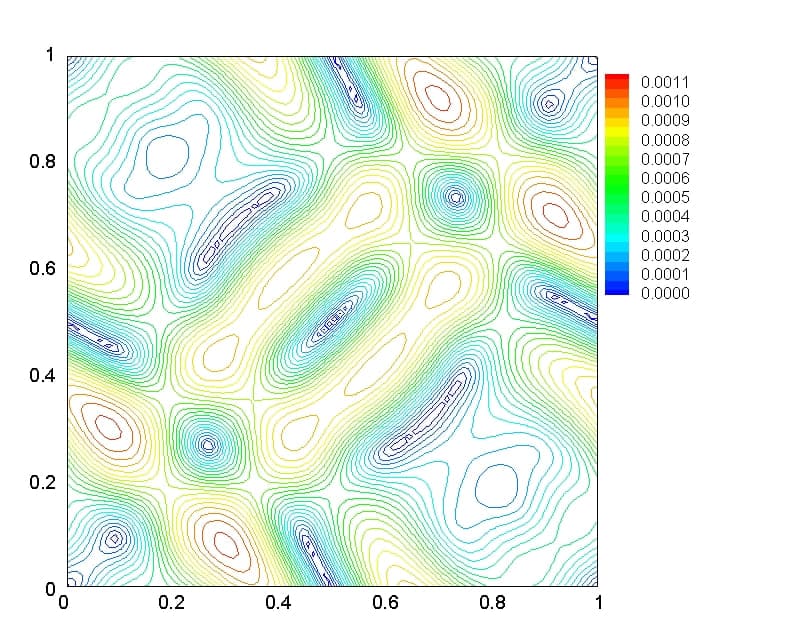}}
 {\includegraphics[height=2.1in]{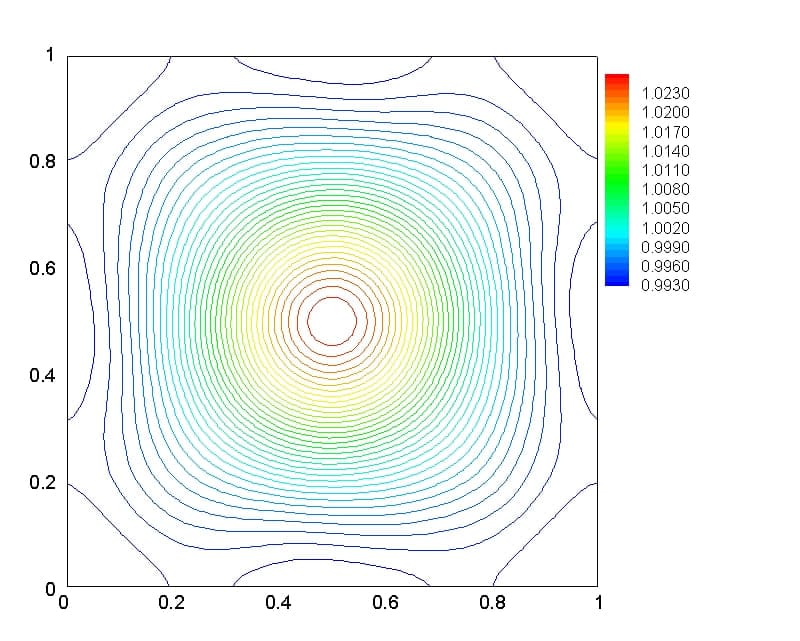}} \\

\centering
{\includegraphics[height=2.1in]{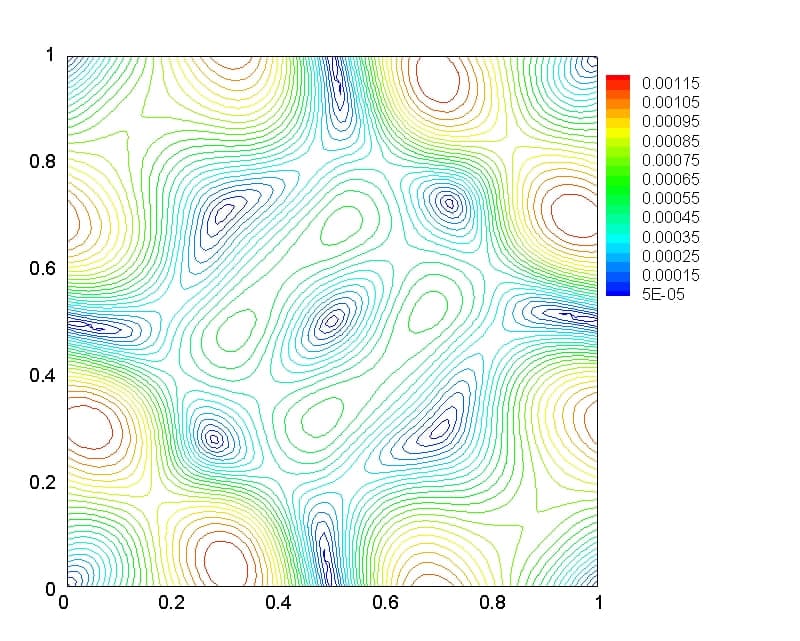}}
 {\includegraphics[height=2.1in]{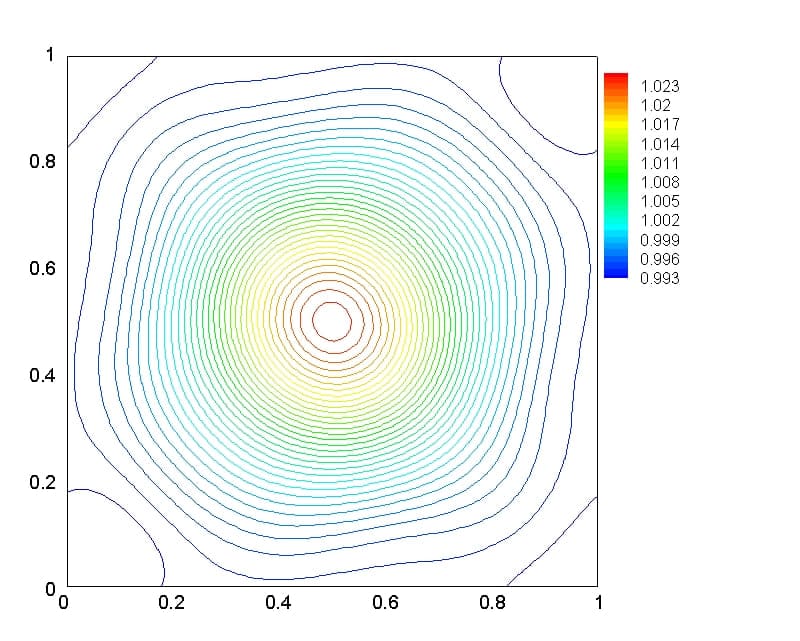}} \\

\centering
{\includegraphics[height=2.1in]{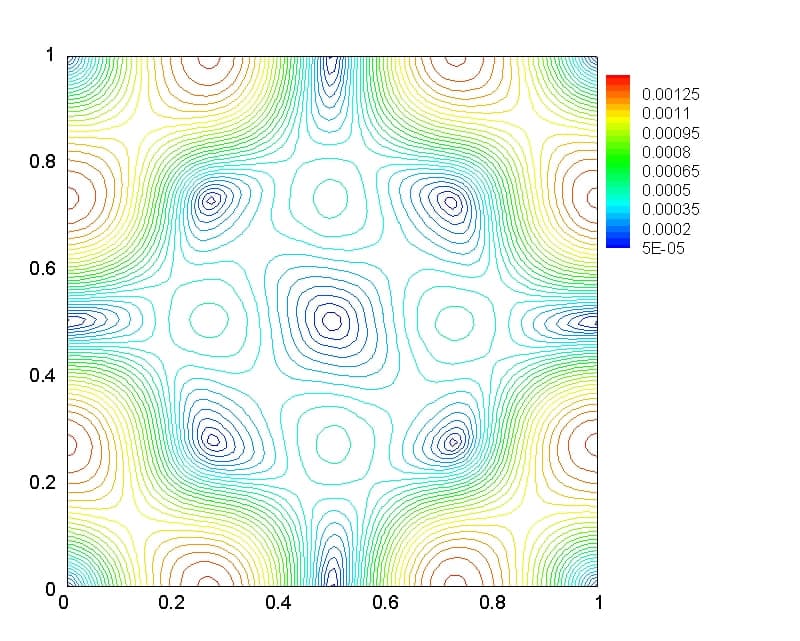}}
 {\includegraphics[height=2.1in]{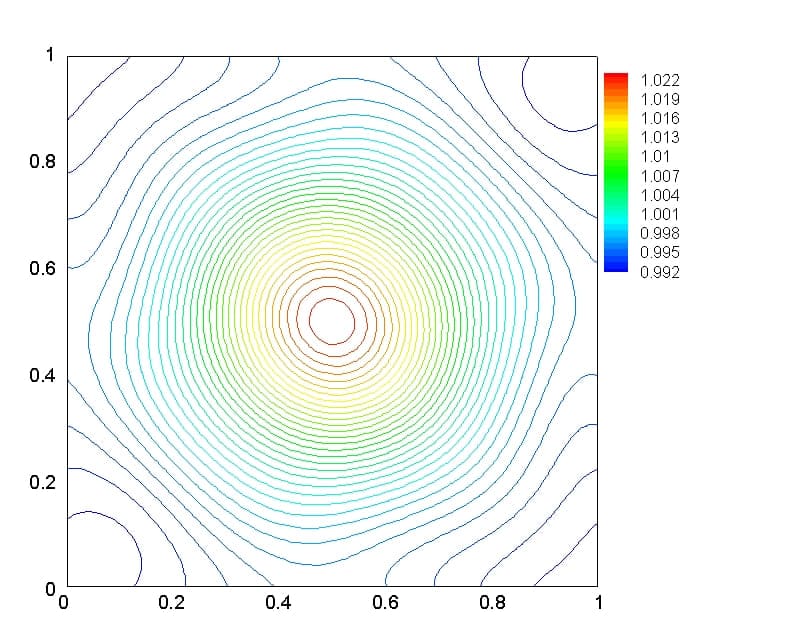}} 
    
\caption{Non-well-balanced solutions of the 2-D system with $a(t)\equiv 1$ at $t=8$, $16$, $50$, $60$. Right column: Solution $\rho$. Left column: Velocity magnitude $V$.}
\label{bxy-nwb}
\end{figure}

\begin{figure}[htbp]
\centering
{\includegraphics[height=2.1in]{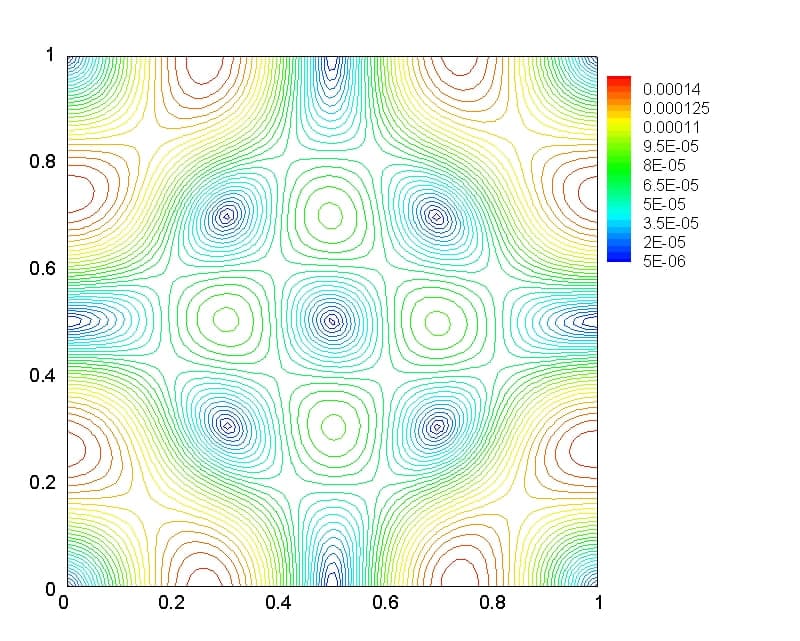}}
 {\includegraphics[height=2.1in]{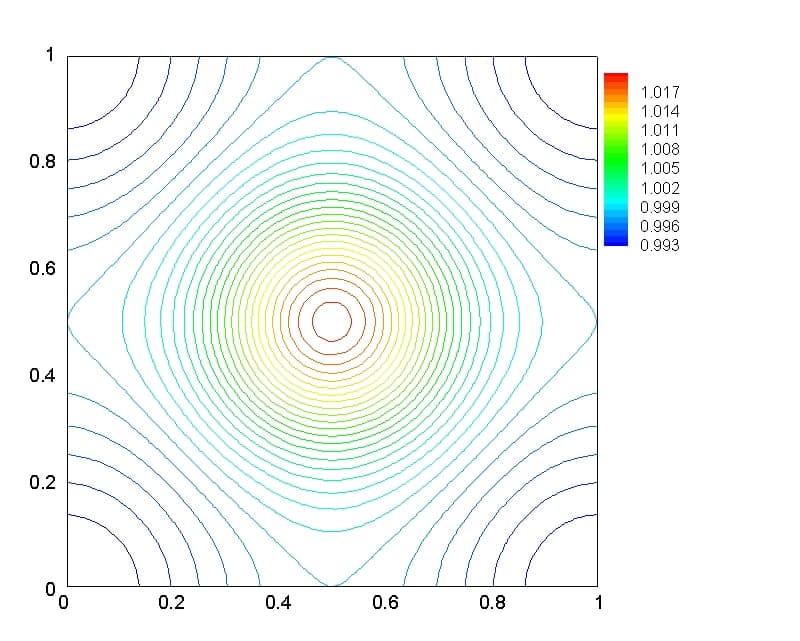}} \\
  
\centering
{\includegraphics[height=2.1in]{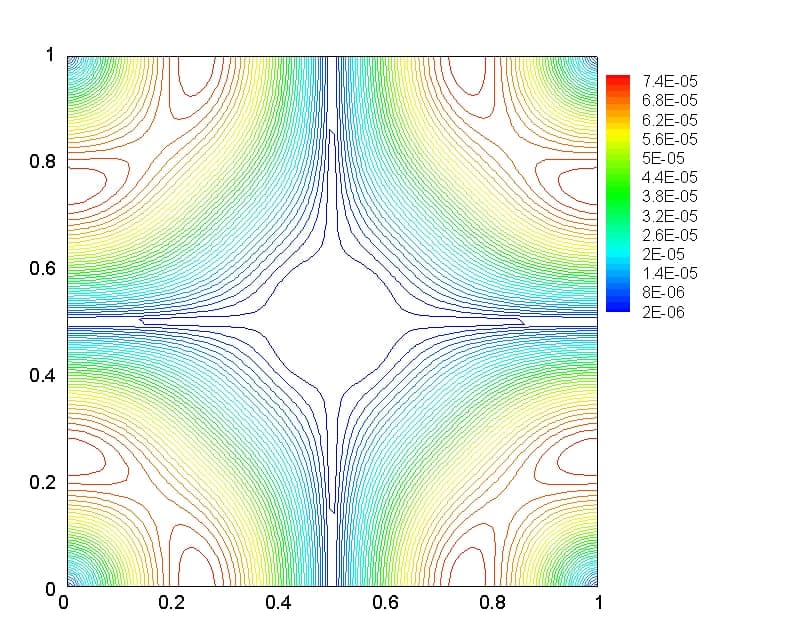}}
 {\includegraphics[height=2.1in]{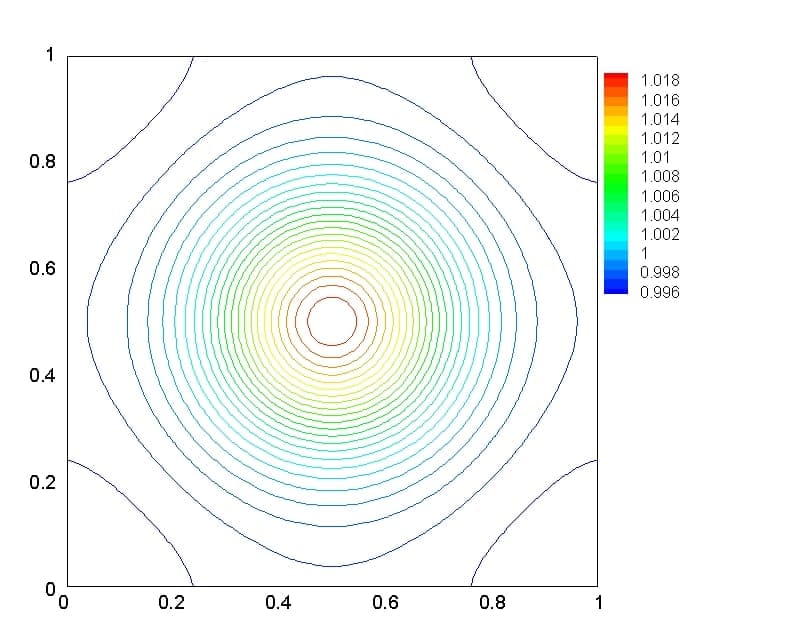}} \\

\centering
{\includegraphics[height=2.1in]{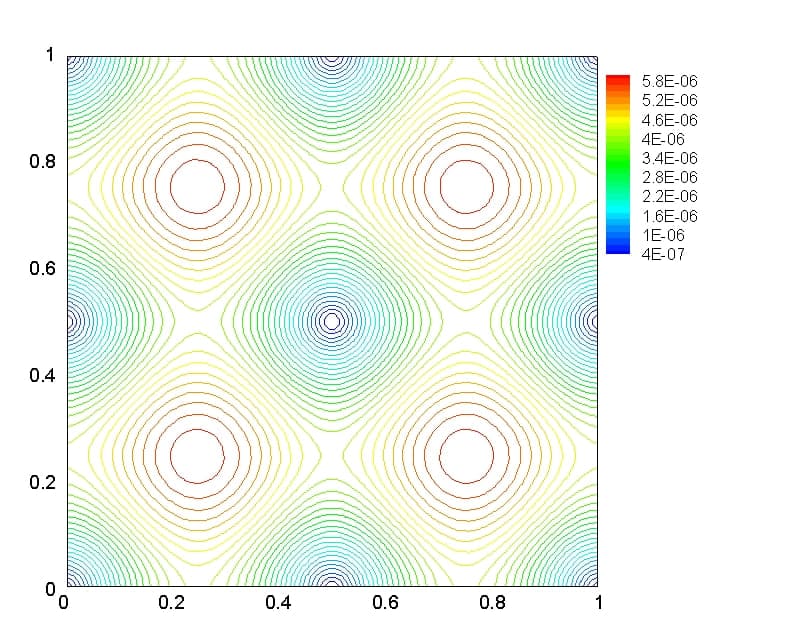}}
 {\includegraphics[height=2.1in]{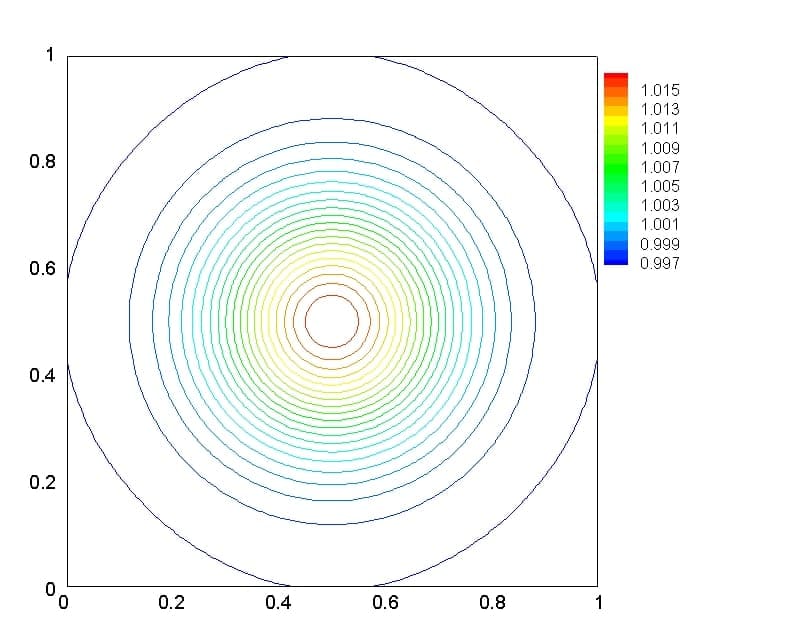}} \\

\centering
{\includegraphics[height=2.1in]{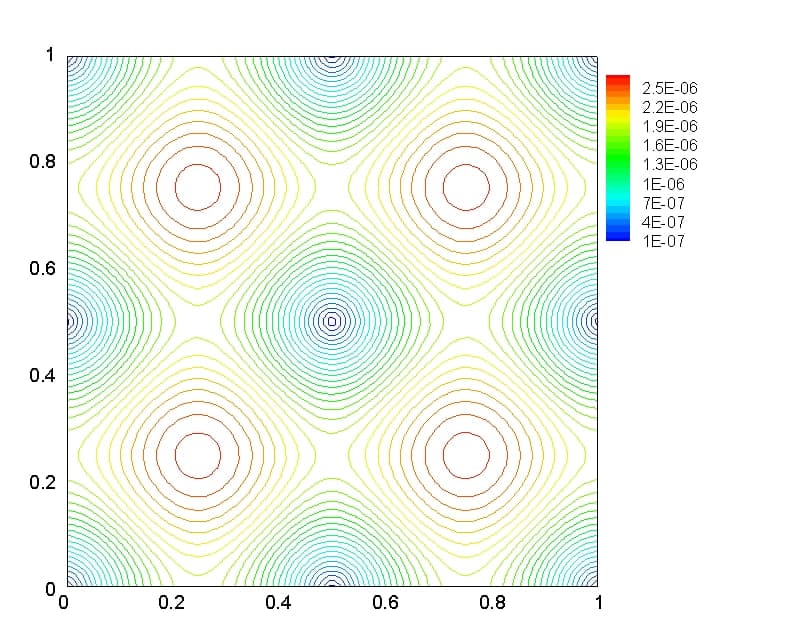}}
 {\includegraphics[height=2.1in]{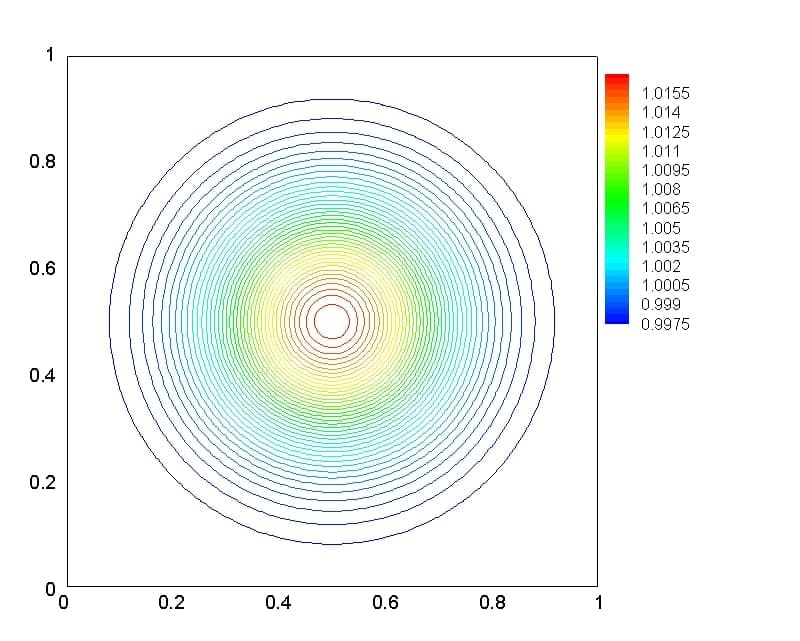}} 
    
\caption{Well-balanced solutions of the 2-D system with $a(t)\equiv 1$ at $t=8$, $16$, $50$, $60$. Right column: Solution $\rho$. Left column: Velocity magnitude $V$.}
\label{bxy-wb}
\end{figure}

\paragraph*{Test 2: Point symmetrical initial data in an expanding background.}

In this test we demonstrate the effect of $a(t) = t^\kappa$ when $t \in [1, +\infty)$ over the background geometry $b(x,y)$ in \eqref{bxy2d}. We solve the two-dimensional system similar to the previous test in this section and plot the rescaled solution $\rhot$ and velocity magnitude $V$ at $t=8$, $16$, $50$, $60$, respectively. The results of both schemes demonstrate that the solution $\rho \to 0$, $V \to 0$ and the rescaled density $\rhot$ converges to  $Cb^2(x,y)$ (where $C$ is a positive constant) as $t$ increases. Similar to the previous test, the rescaled solutions $\rhot$ of the well-balanced scheme converge to the steady state solution faster and show improvements in comparison to the non-well-balanced ones.

\begin{figure}[htbp]
\centering
{\includegraphics[height=2.1in]{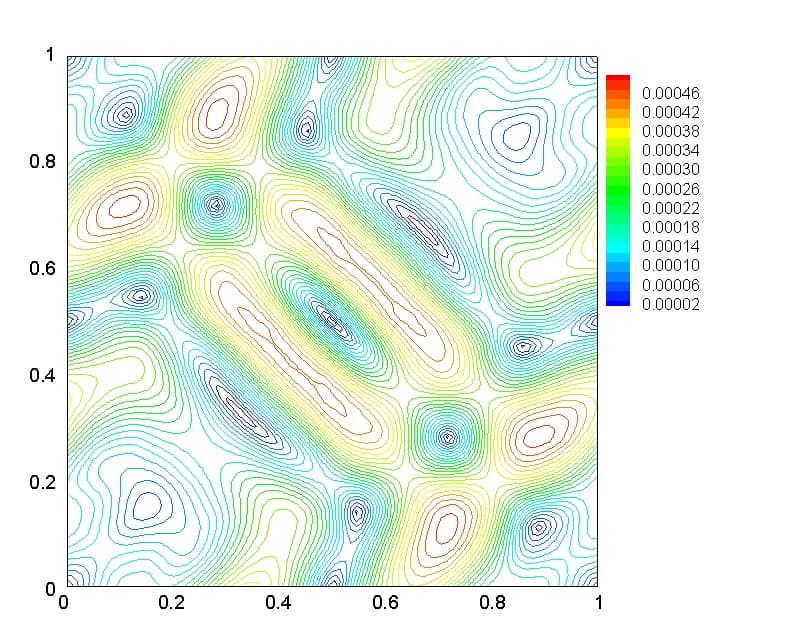}}
 {\includegraphics[height=2.1in]{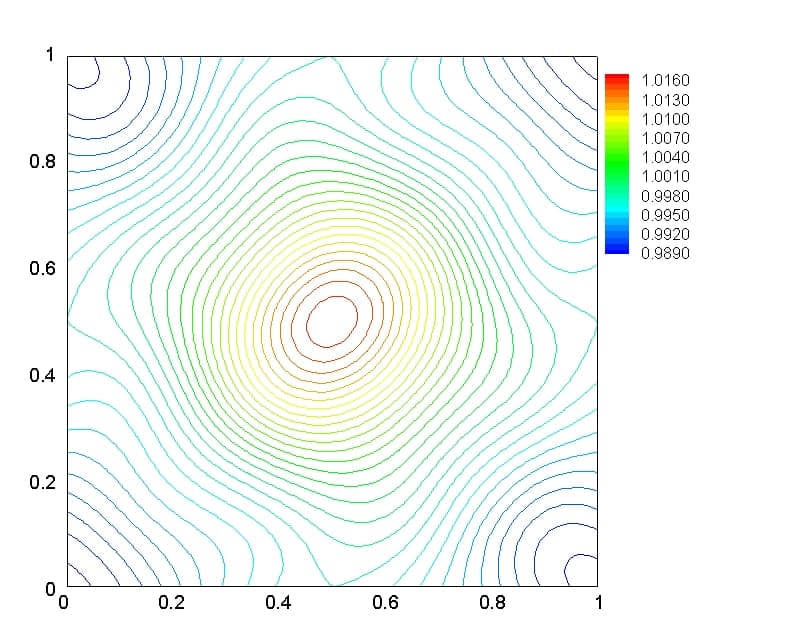}} \\
  
\centering
{\includegraphics[height=2.1in]{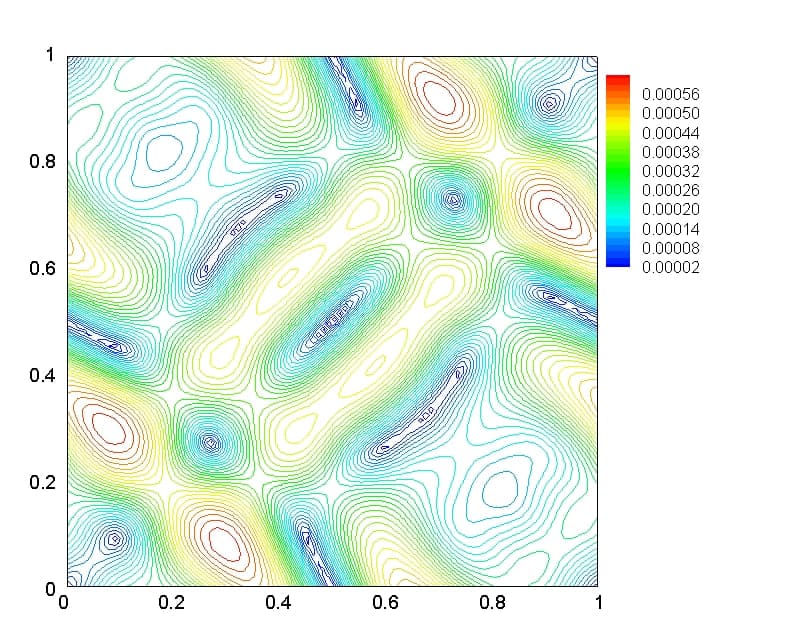}}
 {\includegraphics[height=2.1in]{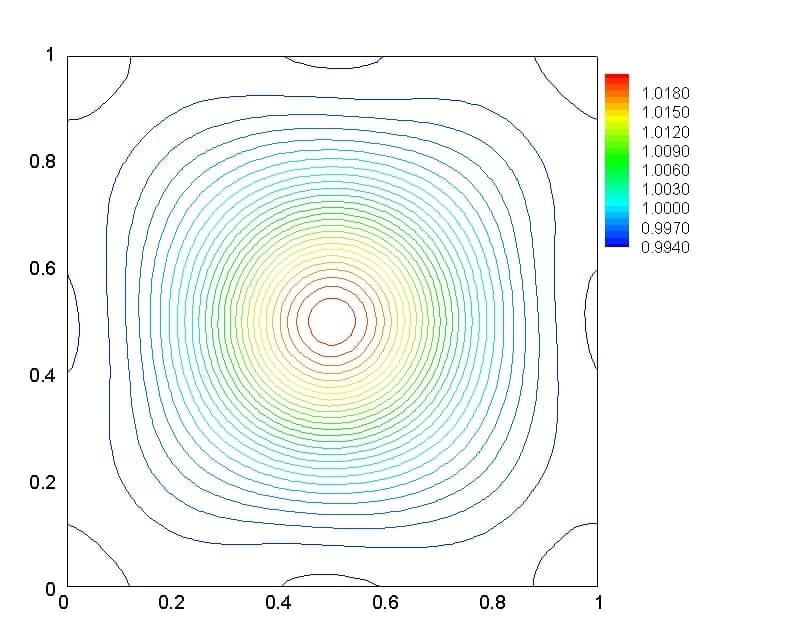}} \\

\centering
{\includegraphics[height=2.1in]{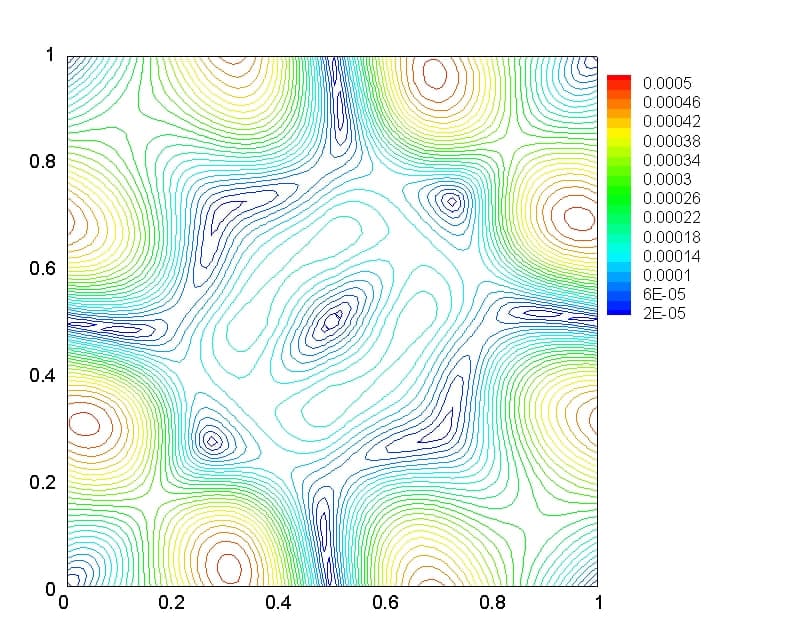}}
 {\includegraphics[height=2.1in]{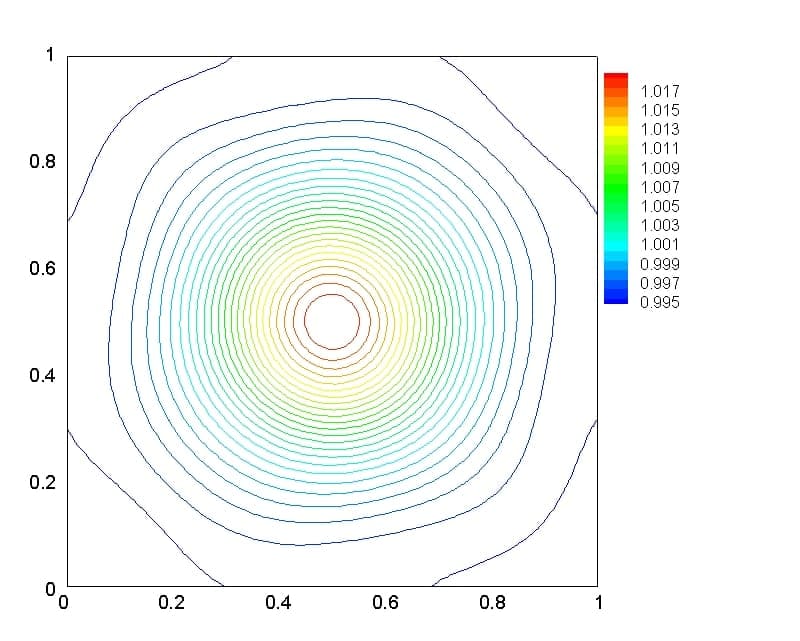}} \\

\centering
{\includegraphics[height=2.1in]{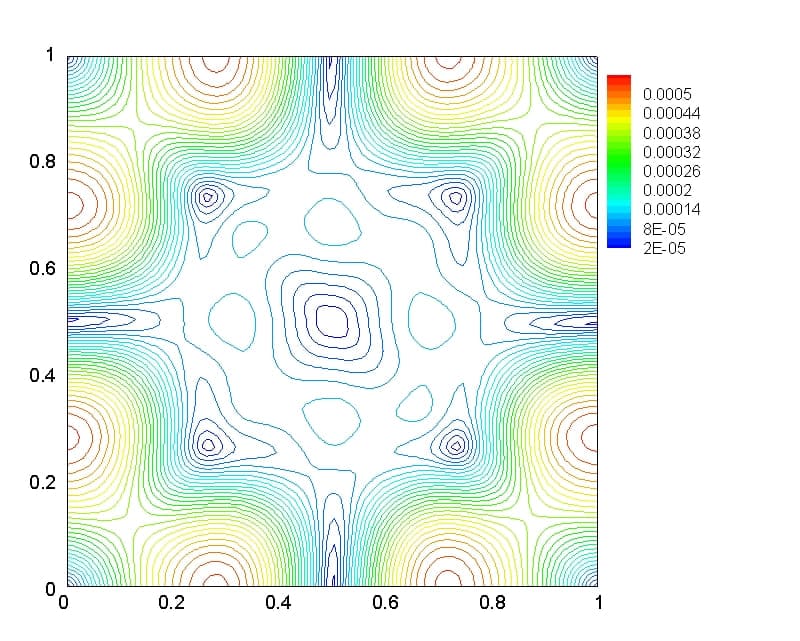}}
 {\includegraphics[height=2.1in]{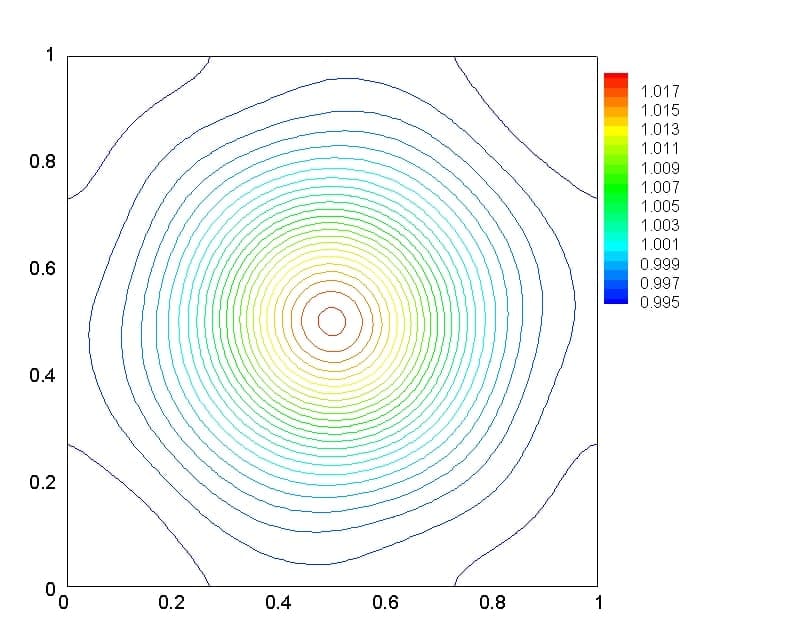}} 
    
\caption{Non-well-balanced solutions of the 2-D system in an expanding background with the background geometry $b(x,y)$ at $t=8$, $16$, $50$, $60$. Right column: Rescaled solution $\rhot$. Left column: Velocity magnitude $V$.}
\label{bxy-nwb-exp}
\end{figure}

\begin{figure}[htbp]
\centering
{\includegraphics[height=2.1in]{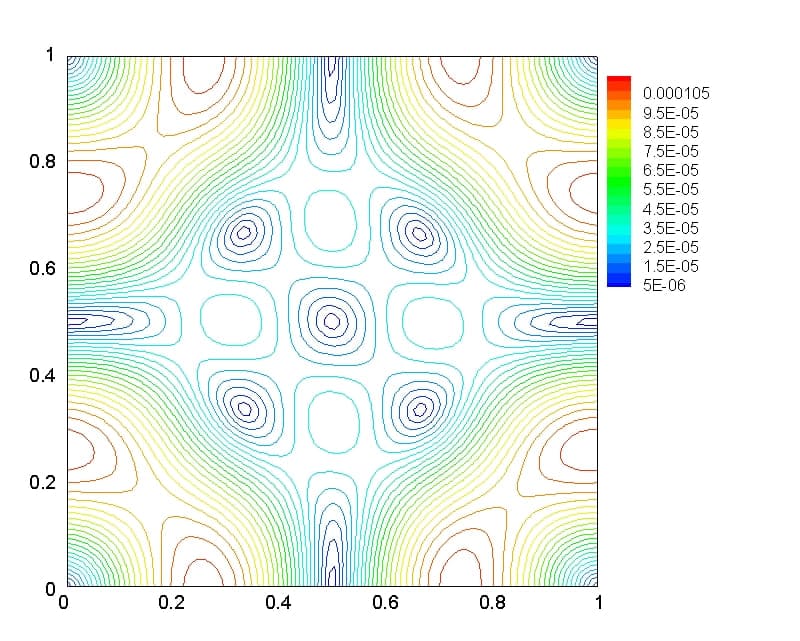}}
 {\includegraphics[height=2.1in]{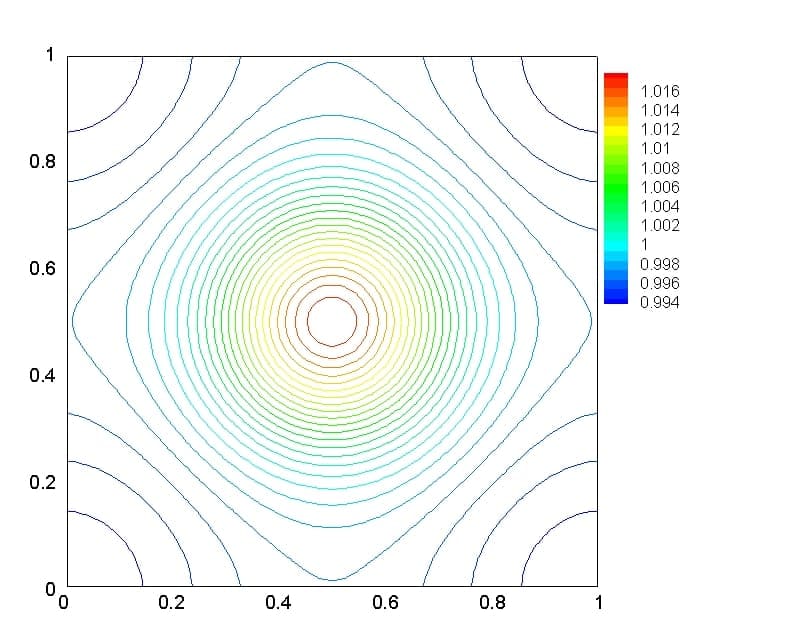}} \\
  
\centering
{\includegraphics[height=2.1in]{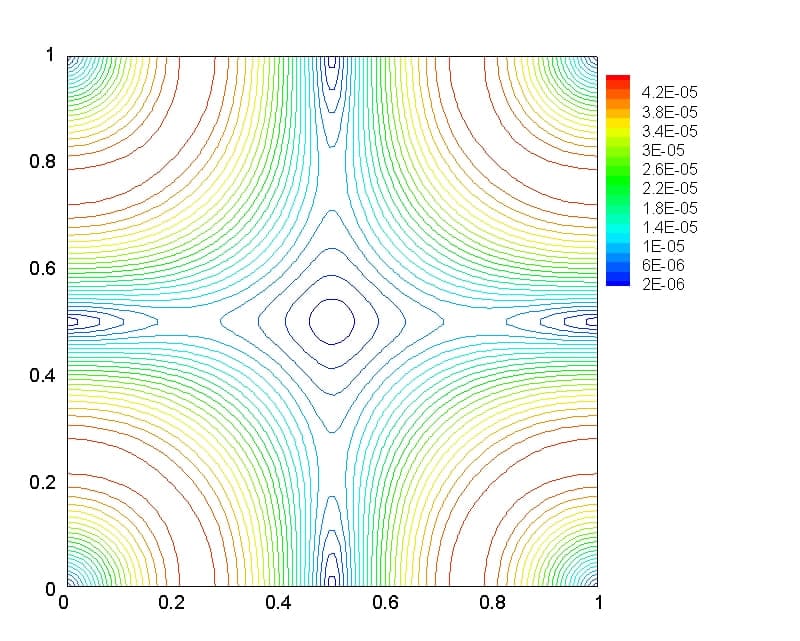}}
 {\includegraphics[height=2.1in]{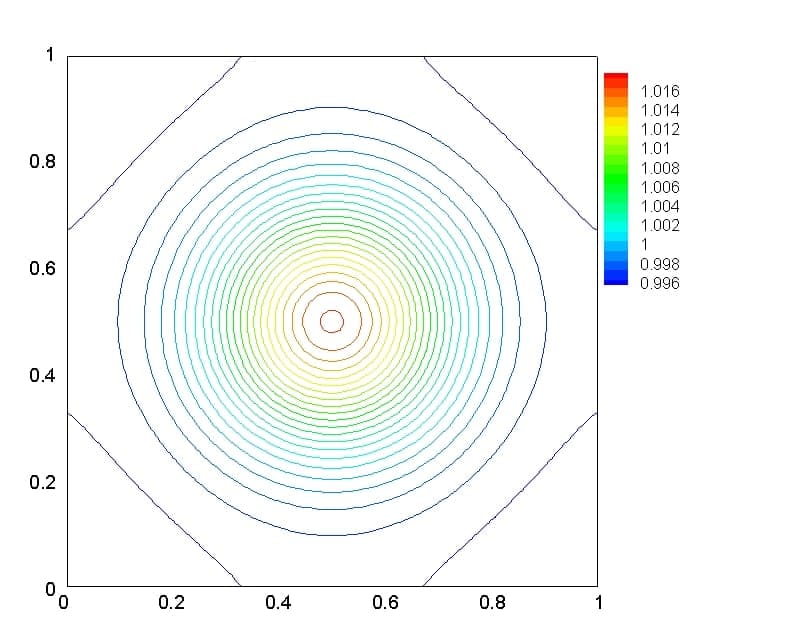}} \\

\centering
{\includegraphics[height=2.1in]{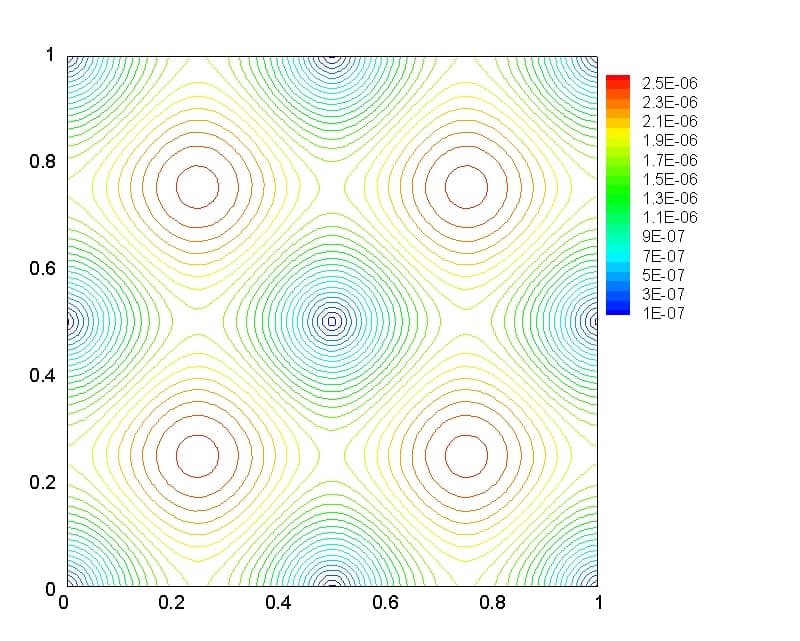}}
 {\includegraphics[height=2.1in]{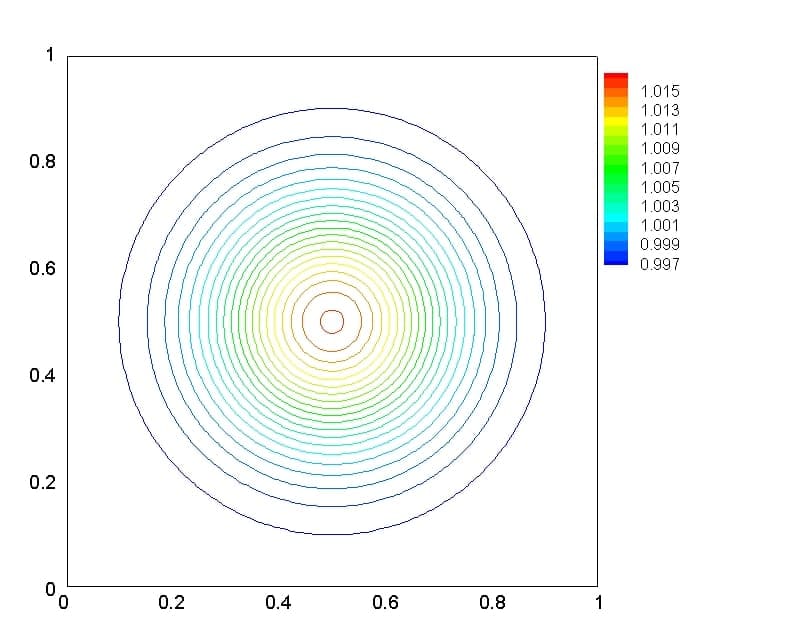}} \\

\centering
{\includegraphics[height=2.1in]{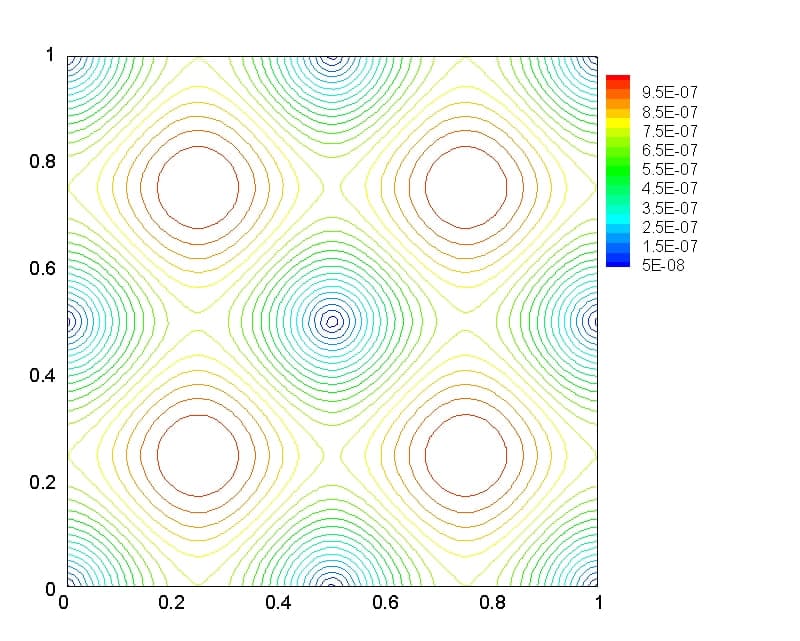}}
 {\includegraphics[height=2.1in]{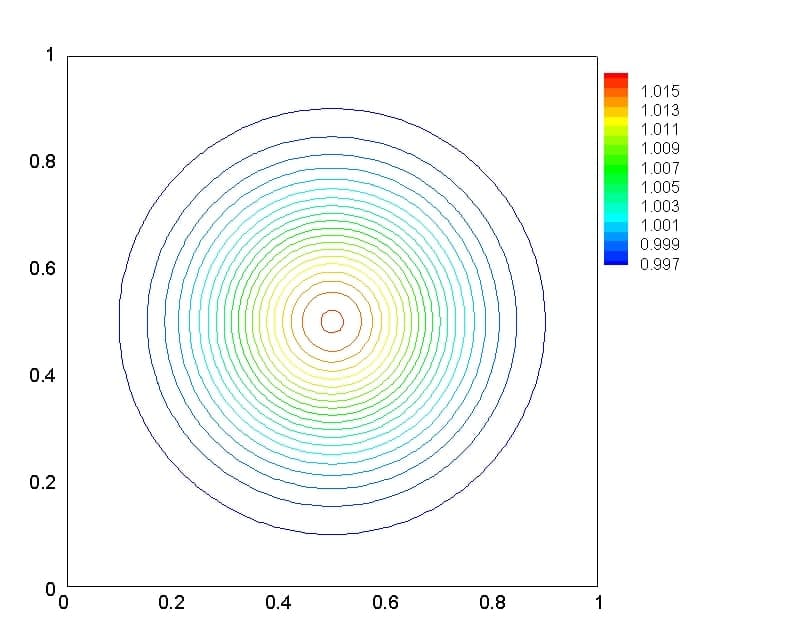}} 
    
\caption{Well-balanced solutions of the 2-D system in an expanding background with the background geometry $b(x,y)$ at $t=8$, $16$, $50$, $60$. Right column: Rescaled solution $\rhot$. Left column: Velocity magnitude $V$.}
\label{bxy-wb-exp}
\end{figure}


\subsection{Conclusion for an expanding background}

Based on the numerical experiments in this section, we have the following observations: 
\bei 

\item On a homogeneous geometry, both in one space and two space dimensions the rescaled density $\rhot$ converges to a constant, while the rescaled velocity $\ut$ converges to zero. 

\item On a non-homogeneous geometry described by a function $b=b(x)$ in one space dimension and $b=b(x,y)$ in two space dimensions, the asymptotic solution after rescaling coincides with this geometric function up to a multiplicative constant. 

\eei 

We thus reach the following conclusion and conjecture.

\begin{claim}[Compressible fluid flows on a future-expanding cosmological background] 
The asymptotic behavior of the solutions to the fluid model \eqref{EulerAbc700} posed on a future-expanding cosmological background is described as follows: 

\bei 

\item The solution $(\rho, u) = (\rho,u)(t, x)$ (with $t >0$) decays to zero as $t \to +\infty$:
\be
\lim_{t \to + \infty} \rho(t, x) =0, \qquad 
\lim_{t \to + \infty} u(t, x) =0, \qquad x \in [0,1]. 
\ee

\item {\bf Spatially homogeneous background.}
When the function $b$ is a constant, the asymptotic rescaled solution defined in \eqref{equa-urho-exp}
is a constant with vanishing velocity: $(\rhob, \ub) = (\rhob, 0)$.  
For sufficiently large times, the solution is not stationary but is approximately time-periodic. The solution propagates at the sound wave speed $\pm k$.

\bei 

\item {\bf One space dimension.}  The rescaled density defined in \eqref{eq: Reu01} looks like two constant density states, both converging to the constant density $\rhot$, while the velocity $\ut$ looks like two linear parts separated by two discontinuities and both linear pieces are converging to $\ub=0$.  

\item {\bf Two space dimensions.} Convergence to constant states is also observed. 

\eei 

\item {\bf General background.} On a spatially non-homogeneous background the rescaled density $\rhot$ defined in \eqref{equa-urho-exp} approaches a non-trivial limit as $t \to + \infty$ of the form 
\be
\rhob(x) = \lim_{t \to + \infty} \rhot(t, x) = C_1 b^2(x), 
\ee
where $C_1>0$ is a constant.
\eei 
\end{claim}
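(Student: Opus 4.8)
Since this statement consolidates the formal matched asymptotics of Section~\ref{thesection:2} with the numerical evidence reported above, the ``proof'' I would propose is a program for upgrading the heuristics into a rigorous statement rather than a short self-contained argument; in two space dimensions, in the absence of a $BV$ theory for the system, the claim is expected to remain at the level of a conjecture supported by the experiments.

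\textbf{Step 1 (decay to zero).} The geometric source is dissipative: with $\del_t a/a=\kappa/t>0$, integrating the first balance law of \eqref{EulerAbc700} over the torus gives $\frac{d}{dt}\int_0^1 U_0\,dx=\int_0^1 S_0\,dx\leq -\frac{c\,\kappa}{t}\int_0^1 U_0\,dx$ for some $c=c(\eps,k)>0$ (the ratio of the two positive weights $1+3\eps^2k^2+(1-\eps^2k^2)\eps^2V^2$ and $1+\eps^4k^2V^2$ being bounded below since $V^2<1/\eps^2$), hence $\int_0^1 U_0(t,\cdot)\,dx\leq C\,t^{-c\kappa}$ and, using $U_0\geq 0$, $\|\rho(t)\|_{L^1([0,1])}\to 0$. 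Upgrading this to pointwise decay $\rho,u\to 0$, so that the eigenvalues behave as in \eqref{equa:eigenv2}, requires a uniform-in-time $BV$ bound in space, which I would obtain from a Glimm-type interaction potential adapted to the damped system, followed by Oleinik-type one-sided estimates.

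\textbf{Step 2 (rescaling and identification of the limit).} With the exponents $\alpha=\kappa(1+3\eps^2k^2)$ and $\beta=\kappa(1-3\eps^2k^2)$ of \eqref{equa-urho-exp}, substitute $\rho=t^{-\alpha}\rhot$, $u=t^{-\beta}\ut$ into \eqref{EulerAbc700} and pass to the slow time $s=\ln t$; the system takes the form $\del_s\rhot=L_1(\rhot,\ut)+e^{-\gamma s}(\cdots)$, $\del_s\ut=L_2(\rhot,\ut)+e^{-\gamma s}(\cdots)$ for some $\gamma>0$, where the reduced operators $L_1,L_2$ are precisely the ones isolated in Section~\ref{thesection:2}. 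Passing $s\to+\infty$ formally kills the $s$-derivatives and leaves the steady constraints that integrate to $\rhob(x)=C_1 b(x)^2$ and $\ub(x)=C_2 b(x)^{-2}$. To make this rigorous I would show that $(\rhot,\ut)(s,\cdot)$ stays in a fixed ball of $BV([0,1])$ for all $s\geq s_0$ --- this is exactly where the precise choice of $(\alpha,\beta)$ enters, making the nonlinear and source contributions to the interaction functional summable in $s$ --- then extract a limit by Helly's theorem and identify it with the unique periodic solution of the reduced constraints, using the well-balanced structure of Definition~\ref{Def: WB} to recognize $C_1b^2$ as the relevant steady state.

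\textbf{Step 3 (homogeneous subcase and the two-N-wave picture).} For $b\equiv\mathrm{const}$ the reduced constraint forces $\rhob$ constant, and $\ub=0$, i.e.\ $C_2=0$, follows from the damped momentum balance $\frac{d}{dt}\int_0^1 U_1\,dx\leq-\frac{2\kappa}{t}(1+\eps^2k^2)\int_0^1 U_1\,dx$, which makes $t^{\beta}\int_0^1 U_1\to 0$. The ``two $N$-waves at speeds $\pm k$'' assertion is then the statement that, after rescaling, all but two of the shocks have merged, which I would deduce from the decay rate of individual shock amplitudes (at least $t^{-\kappa}$ from the damping, faster from binary interactions) together with the fact that, on the periodic domain, two constant states of $\ut$ separated by two jumps is the only $BV$ steady profile of the limiting scalar-type equation compatible with the sign conditions $\lambda_1\to-k$, $\lambda_2\to k$. \textbf{Main obstacle.} The crux is the uniform-in-$s$ $BV$ bound and the genuine, not merely subsequential, convergence of $\rhot$ to $C_1 b^2$ in Step 2: after rescaling, the geometric source is no longer dissipative in the classical entropy sense, and one must rule out that the interaction between the residual shocks and the non-homogeneous coefficient $b(x)$ feeds energy back into the oscillations. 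This calls for a Lyapunov functional combining the $BV$ seminorm of $\rhot$, the distance of $\ut$ to the steady profile, and a weighted shock-interaction count, shown to be non-increasing up to summable-in-$s$ errors; establishing its monotonicity is the hard analytic core, and in two space dimensions no such framework is presently available.
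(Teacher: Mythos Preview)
Your reading of the situation is correct: in the paper this ``claim'' is not accompanied by a proof at all. It is explicitly presented as a \emph{conclusion and conjecture} drawn from two ingredients --- the formal matched-asymptotics computation in Section~\ref{thesection:2} (which identifies the exponents $\alpha,\beta$ and the profile $\rhob=C_1 b^2$) and the numerical experiments of Section~\ref{thesection:5}. The paper offers no rigorous argument for decay, for the $BV$ compactness of the rescaled variables, or for the two-$N$-wave structure; these are observations extracted from the simulations.

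Your proposal therefore does not reproduce the paper's proof (there is none), but rather sketches a genuine analytic program that would be needed to turn the claim into a theorem. Your Step~1 energy estimate and Step~2 slow-time reformulation are reasonable and go well beyond anything the paper attempts; your identification of the uniform-in-$s$ $BV$ bound as the main obstacle is accurate, and you are right that in two space dimensions the statement should remain a conjecture. If anything, you might flag more strongly that even in one space dimension the paper does not claim a rigorous result --- the heuristics in Section~\ref{thesection:2} already assume smoothness and that $\rhot,\ut$ depend on $x$ only, so the derivation of $\rhob=C_1 b^2$ there is purely formal, exactly as you treat it.
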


  
\section{Global dynamics on a future-contracting background}
\label{thesection:6}

\subsection{Spatially homogeneous background in one space dimension} 

In the future-contracting case, we now demonstrate that the density $\rho \to +\infty$ blows up, while the velocity $u$ approaches the light speed value or zero. 

\paragraph{A test with variable density data} We choose the same initial data as test $2$ of  the expanding background to be
\bel{eq:Ini0}
u_0(x)=0, \qquad \rho_0(x) = 1+ \sin\Big({6\over7} \pi x\Big) \cos\Big({7\over 2} \pi x).
\ee

We take the exponent $\kappa =2$, $N = 500$ with CFL $= 0.3$, and light speed to be unit. In Figure~\ref{fig:Con0} and ~\ref{fig:Con1}, we plot the evolution of the solution $u$ and $\rho$ as $t \to 0$ with sound speed $k = 0.5$.
We find that $\rho \to +\infty$ and $u \to \pm 1$, as $t \to 0$. We also plot the solution $u$ and $\rho$ when the sound speed $k =0.1$ in Figure~\ref{Fig: Euler-HLL-Con4}, which also shows that $\rho \to +\infty$ and $u \to \pm 1$, as $t \to 0$.
However, $u \to  \pm 1$ when $k = 0.1$  appears earlier than the case $k = 0.5$.  In Figure~\ref{Fig: Euler-HLL-Con5}, we plot solution $u$ and $\rho$ with $k = 0.9$ at $t = -10^{-6}$.
Observe that the density $\rho$ blows up, while the velocity 
$u$ closes to $0$.

\paragraph{Rescaling the numerical solution}

We now plot the rescaled solution  $\rhot$ defined in \eqref{equa:resc499} with the initial data given by \eqref{eq:Ini0} and $k = 0.5$; see Figure~\ref{Fig: Euler-HLL-ResCon}. We observe that the asymptotic solution $\rhot$ approaches a bounded and stationary limit.

\begin{figure}
\centering
\begin{minipage}{0.5\linewidth}
  \centerline{\includegraphics[width=5.6in]{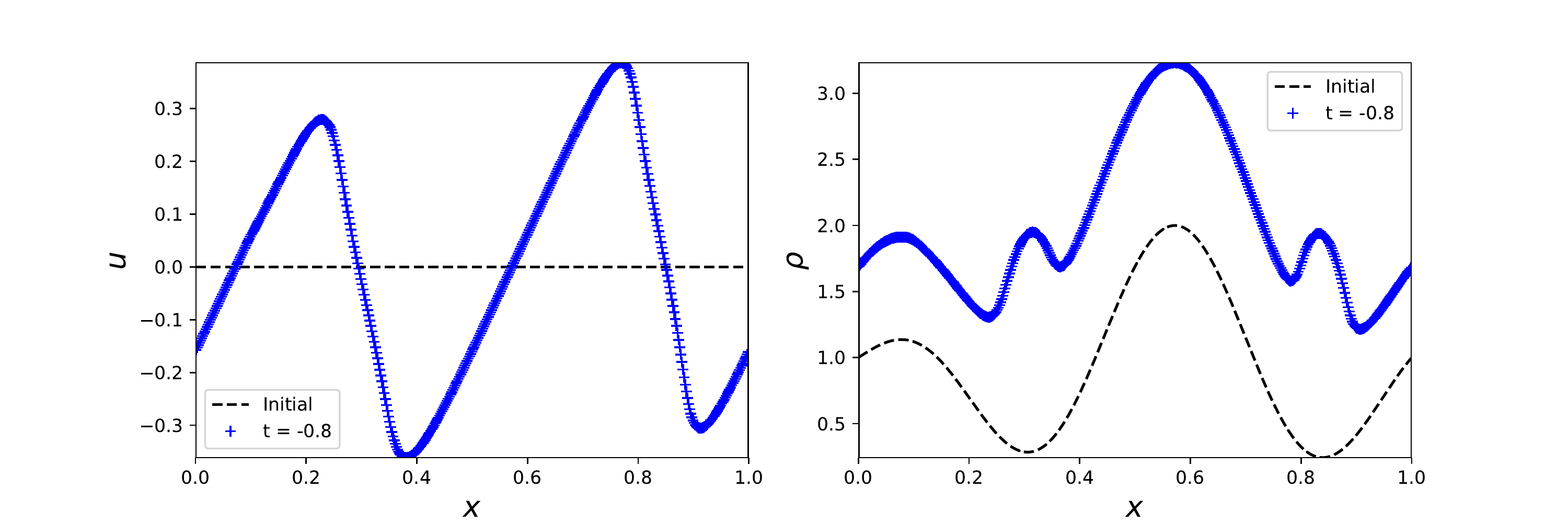}}
\end{minipage}
\vfill
\begin{minipage}{.5\linewidth}
  \centerline{\includegraphics[width=5.6in]{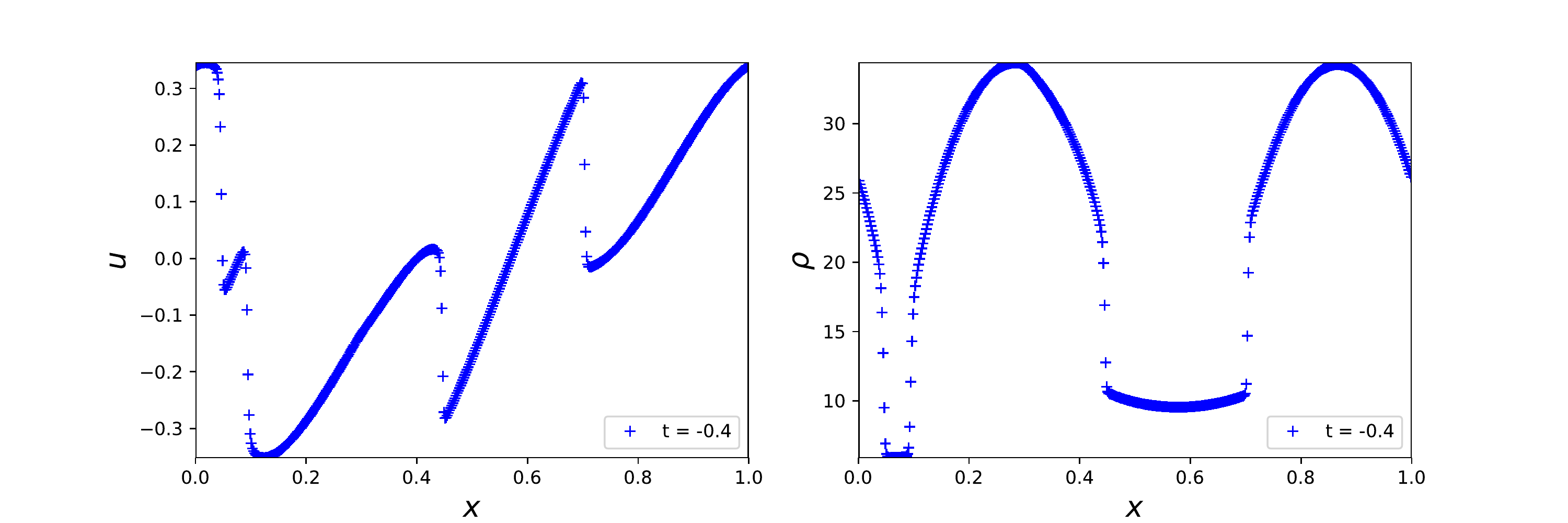}}
\end{minipage}
\vfill
\begin{minipage}{.5\linewidth}
  \centerline{\includegraphics[width=5.6in]{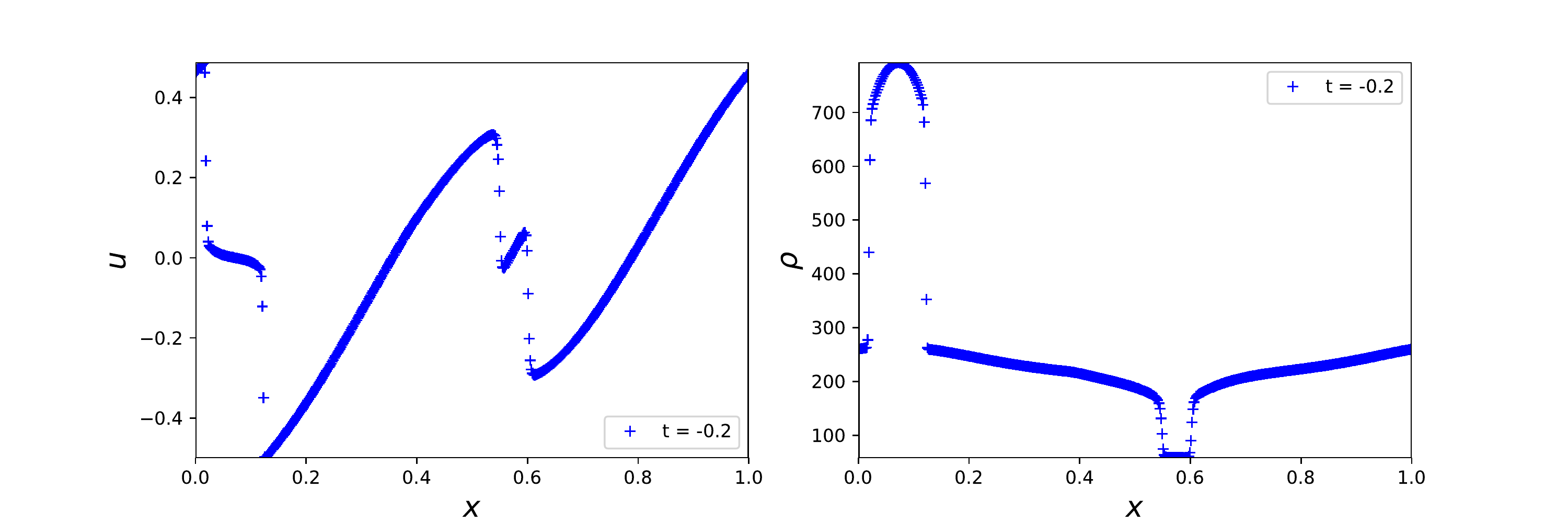}}
\end{minipage}

\vfill
\begin{minipage}{.5\linewidth}
  \centerline{\includegraphics[width=5.6in]{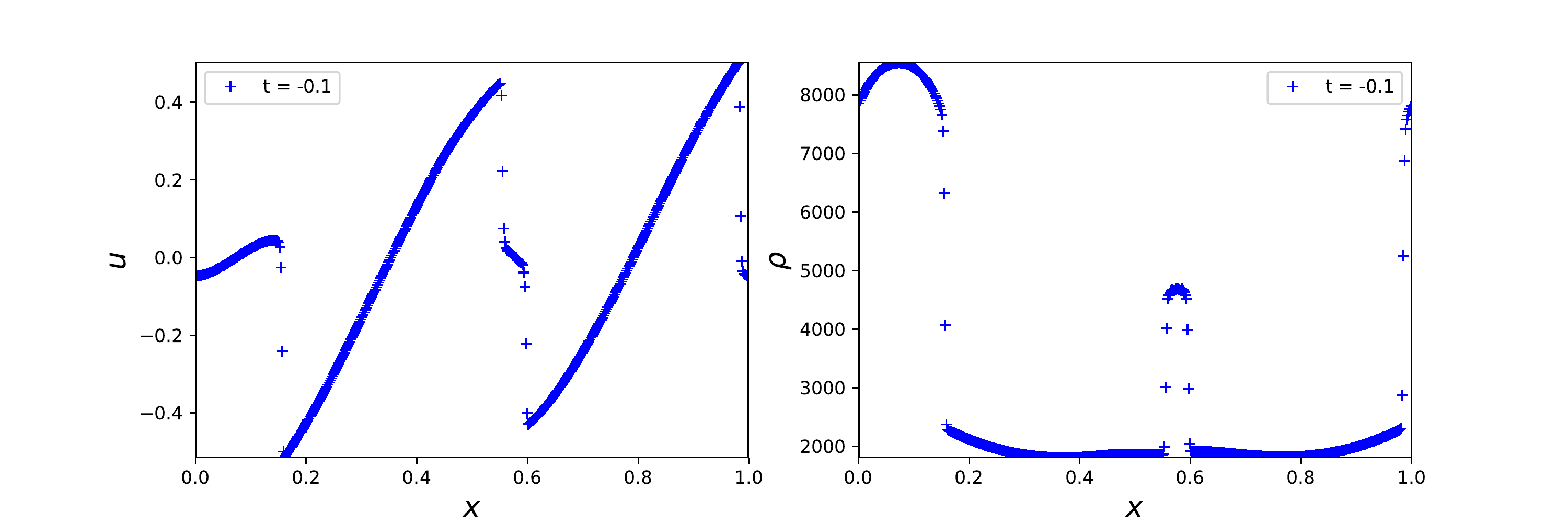}}
\end{minipage}

\caption{The evolution of solution on a contracting background with $k =0.5$.}
\label{fig:Con0}
\end{figure}

\begin{figure}
\centering

\begin{minipage}{.5\linewidth}
  \centerline{\includegraphics[width=5.6in]{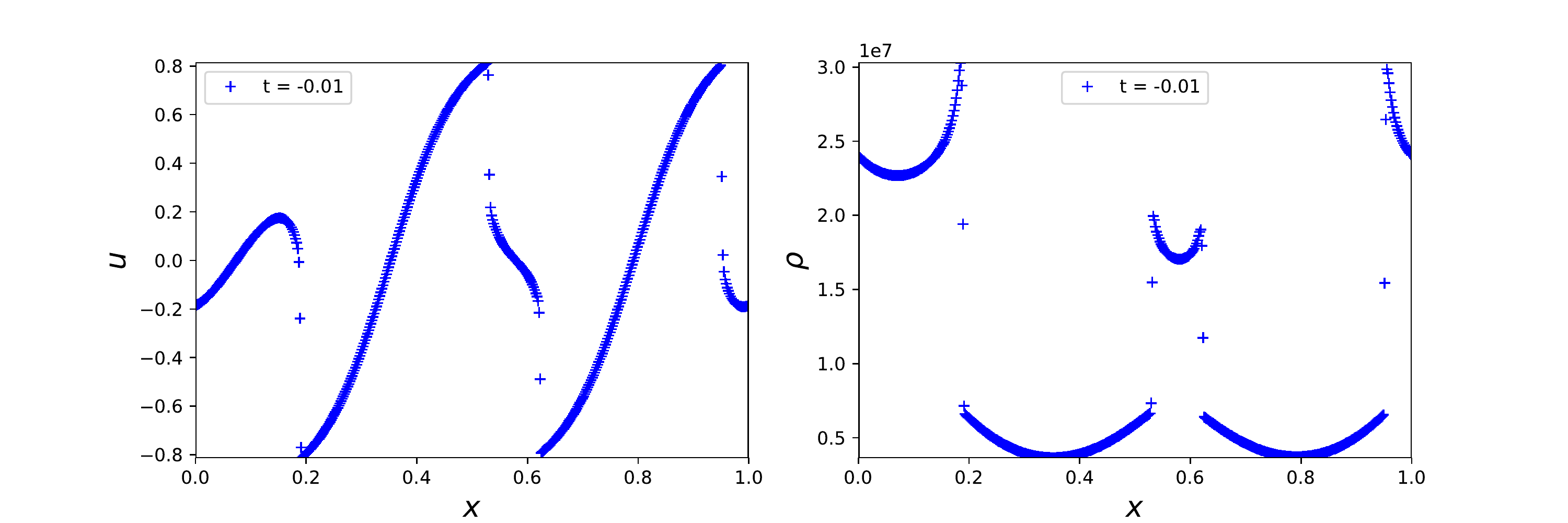}}
\end{minipage}

\vfill
\begin{minipage}{.5\linewidth}
  \centerline{\includegraphics[width=5.6in]{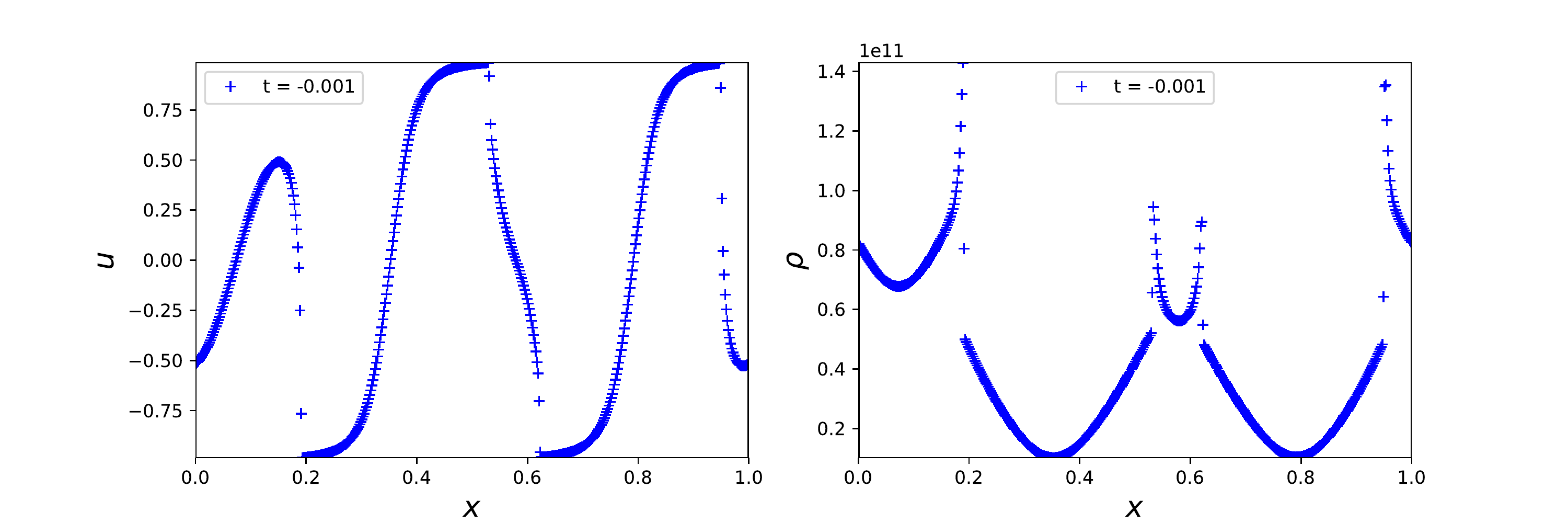}}
\end{minipage}

\vfill
\begin{minipage}{.5\linewidth}
  \centerline{\includegraphics[width=5.6in]{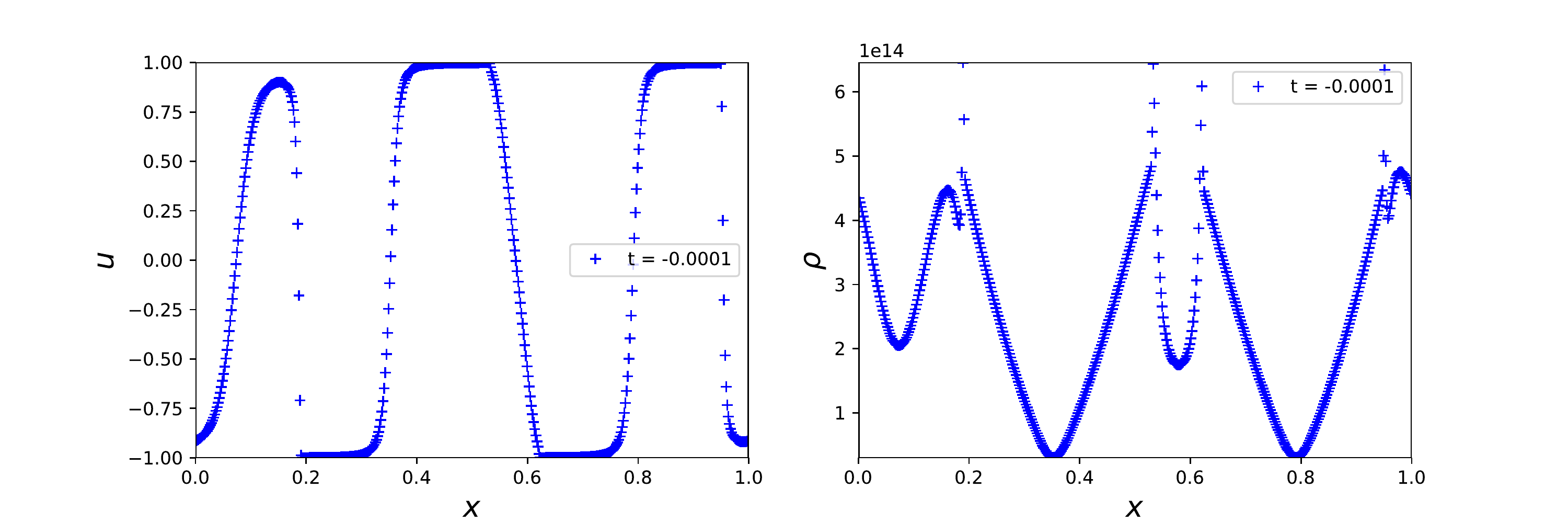}}
\end{minipage}

\vfill
\begin{minipage}{.5\linewidth}
  \centerline{\includegraphics[width=5.6in]{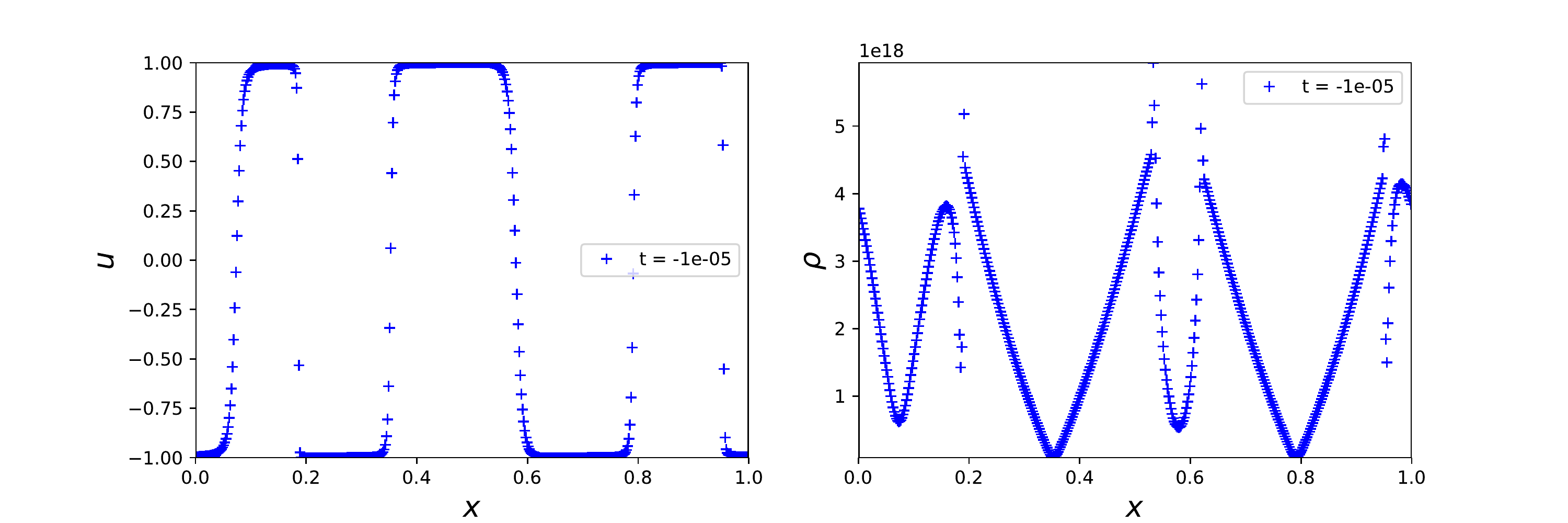}}
\end{minipage}

\caption{The evolution of solution on a contracting background with $k =0.5$.}
\label{fig:Con1}
\end{figure}




\begin{figure}[htbp]
\centering
\epsfig{figure = 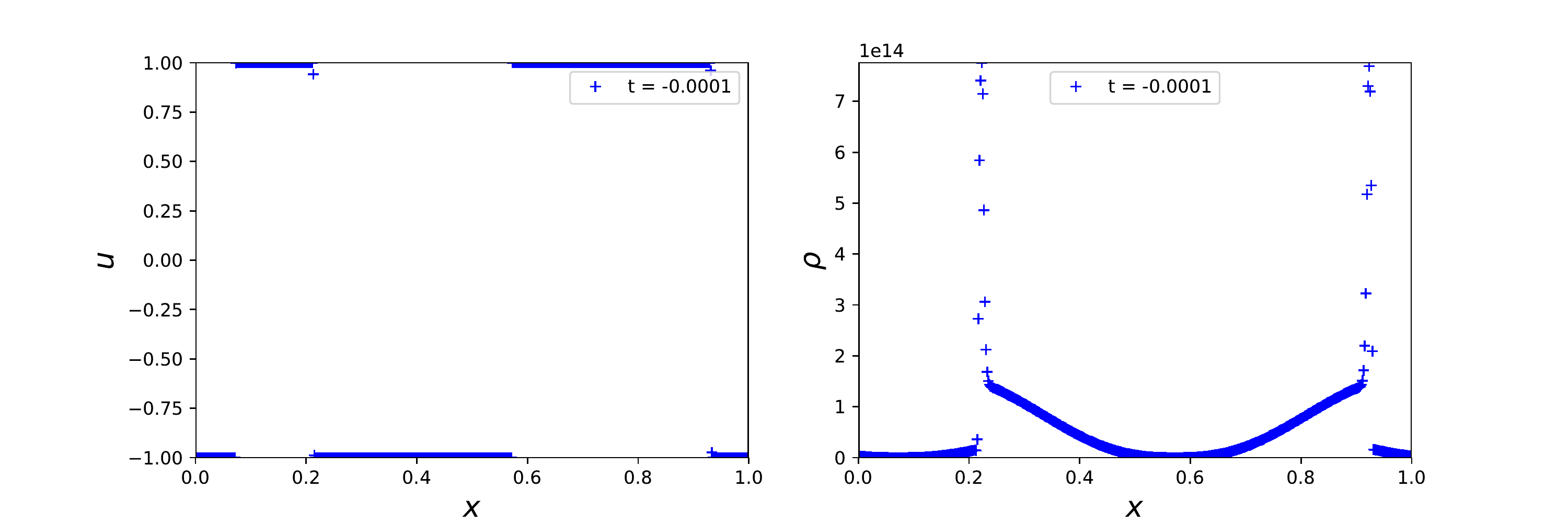,height = 2.1 in}  
\caption{The solution on a contracting background at $t = -10^{-4}$ with $k =0.1$.}
\label{Fig: Euler-HLL-Con4}
\end{figure}

\begin{figure}[htbp]
\centering
\epsfig{figure = 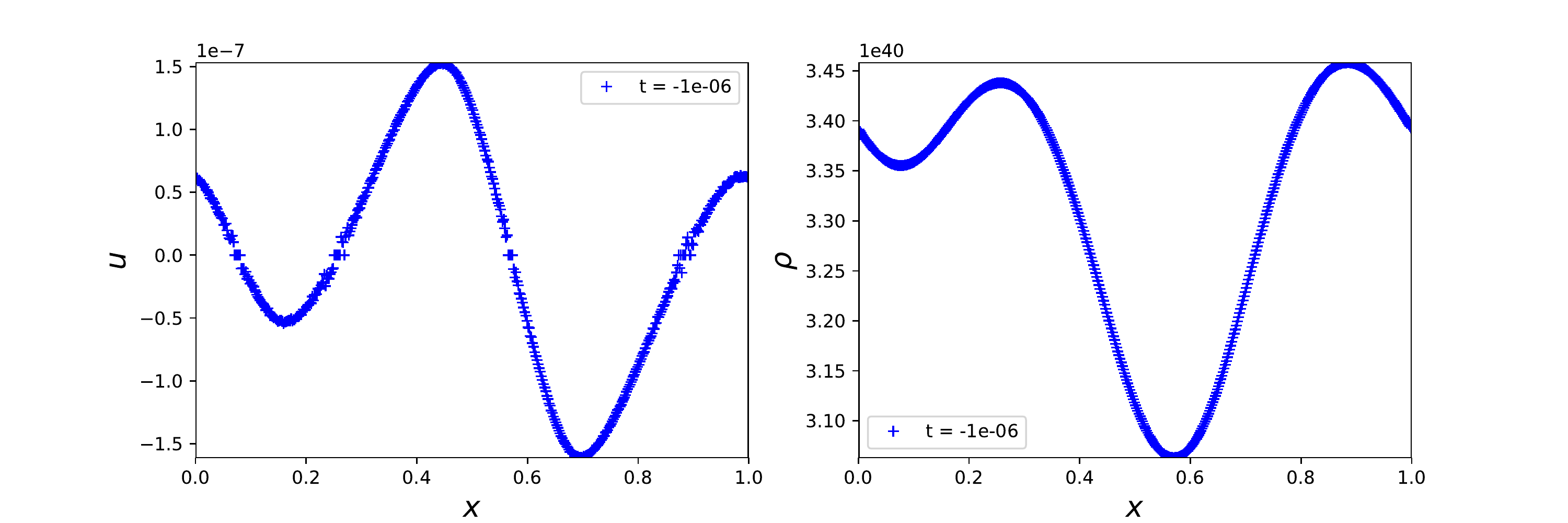,height = 2.1 in}  
\caption{The solution on a contracting background at $t = - 10^{-6}$ with $k =0.9$.}
\label{Fig: Euler-HLL-Con5}
\end{figure}


\begin{figure}[htbp]
\centering
\epsfig{figure = 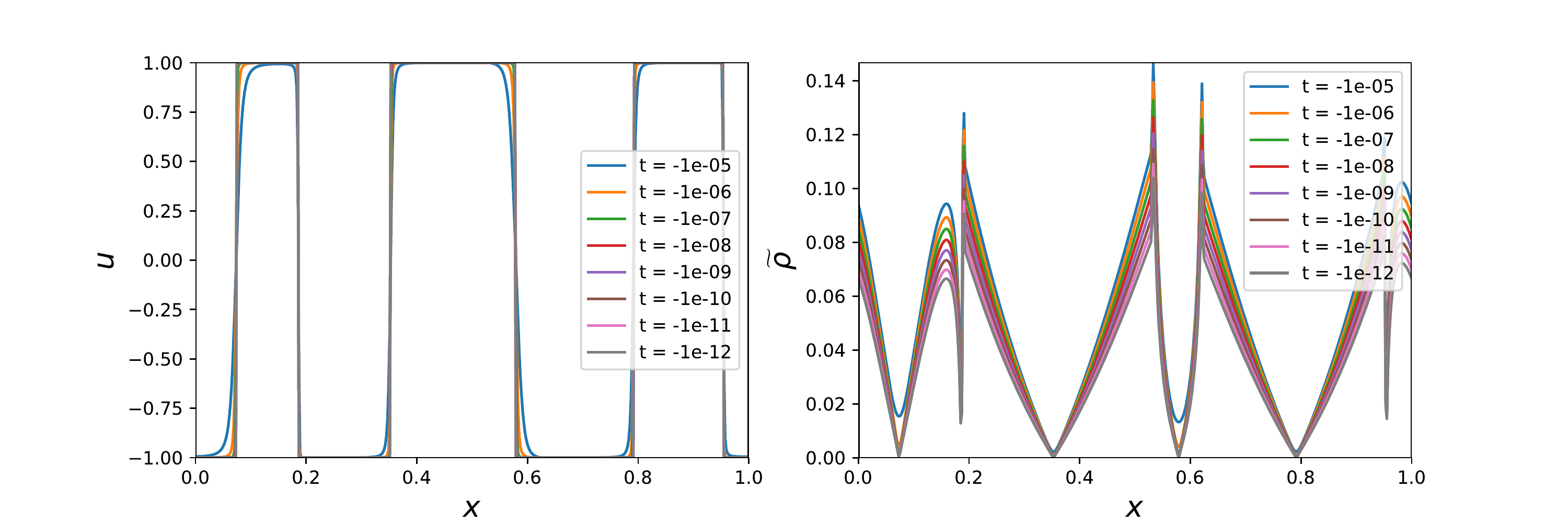,height = 2.1 in}  
\caption{The solution $u$ and rescaled solution $\rhot$ on a contracting background with $k = 0.5$.}
\label{Fig: Euler-HLL-ResCon}
\end{figure}


\subsection{Flows on a spatially homogeneous background in two space dimensions} 

In the two-dimensional future-contracting case, we now demonstrate that the density $\rho \to +\infty$ blows up, while the velocity magnitude $V$ approaches the light speed value. 

\paragraph*{Test 1: Symmetrical initial data.}

We choose the initial data \eqref{eq:initial2D} posed at $t_0 = -1$ and defined in the domain $[0,1]\times[0,1]$. Recall that in all two-dimensional tests, the exponent $\kappa =2$, the sound speed $k= 0.5$, CFL $= 0.5$, the light speed to be a unit, and the grid is $[100\times100]$.
In Figure~\ref{a(t)-con}, we plot the rescaled solution $\rhot$ and velocity magnitude  $t=-0.5$, $-10^{-1}$, $-10^{-3}$, $-10^{-5}$, respectively. The results which are obtained by the standard HLL scheme demonstrate that the solution $\rho \to +\infty$ and $V \to 1$ as $t$ increases.

\begin{figure}[htbp]
\centering
 {\includegraphics[height=2.1in]{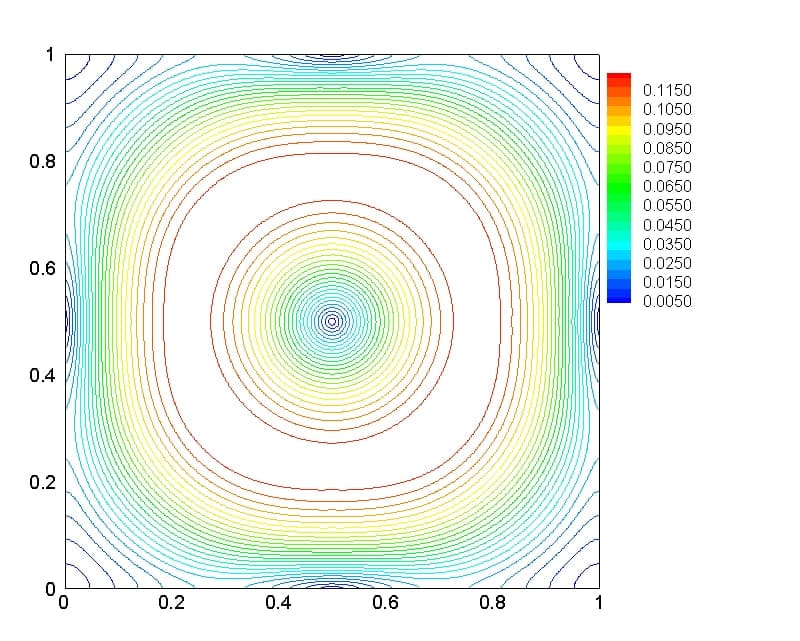}}
 {\includegraphics[height=2.1in]{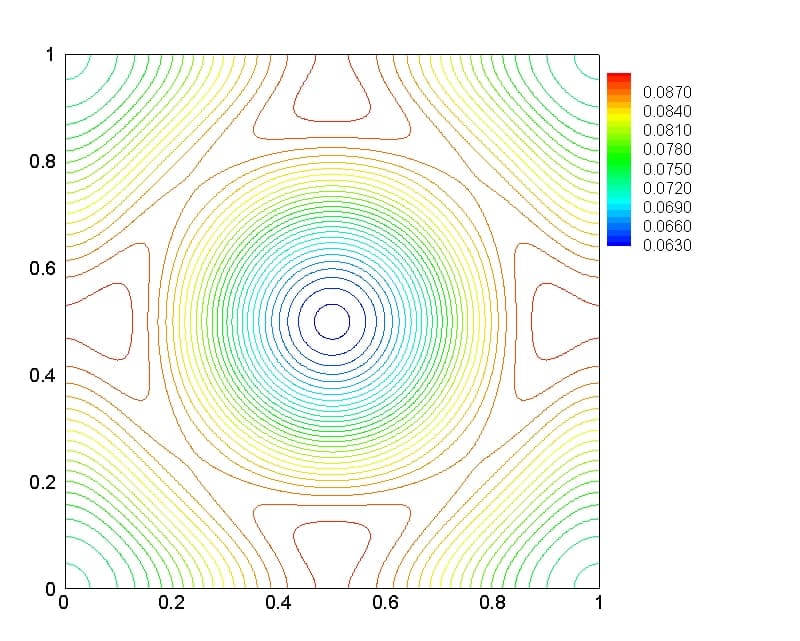}} \\
  
\centering
{\includegraphics[height=2.1in]{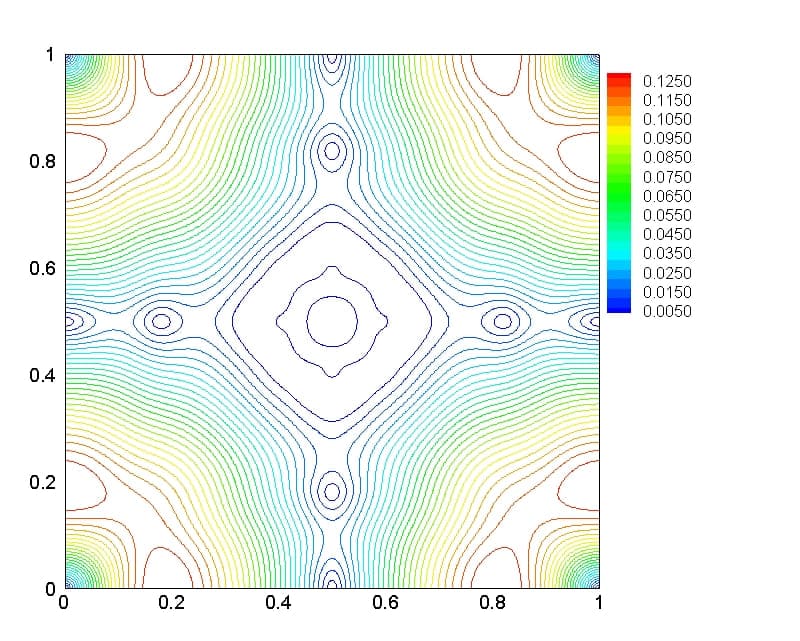}}
 {\includegraphics[height=2.1in]{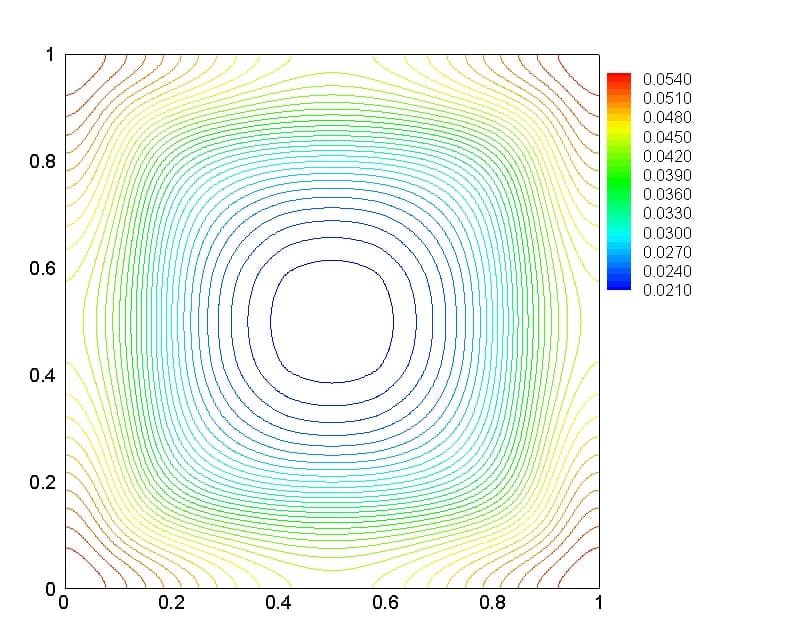}} \\

\centering
{\includegraphics[height=2.1in]{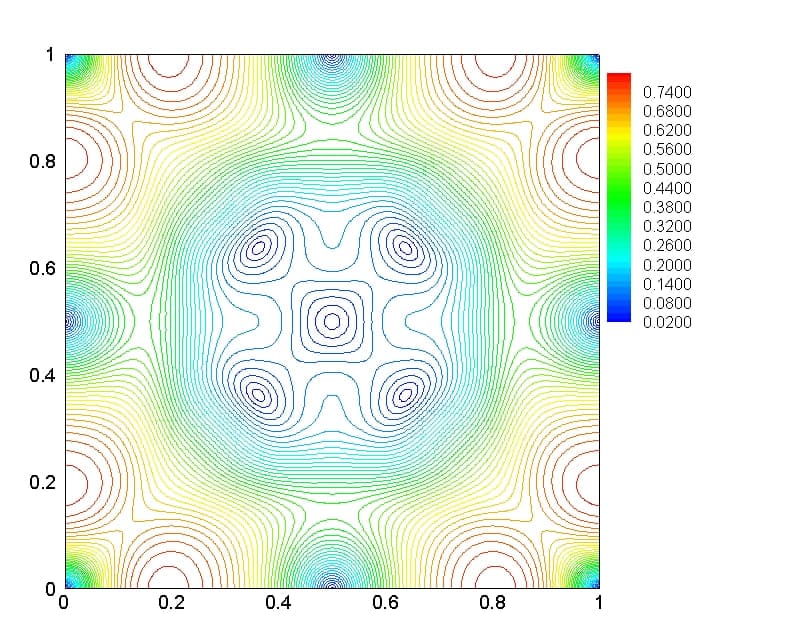}}
 {\includegraphics[height=2.1in]{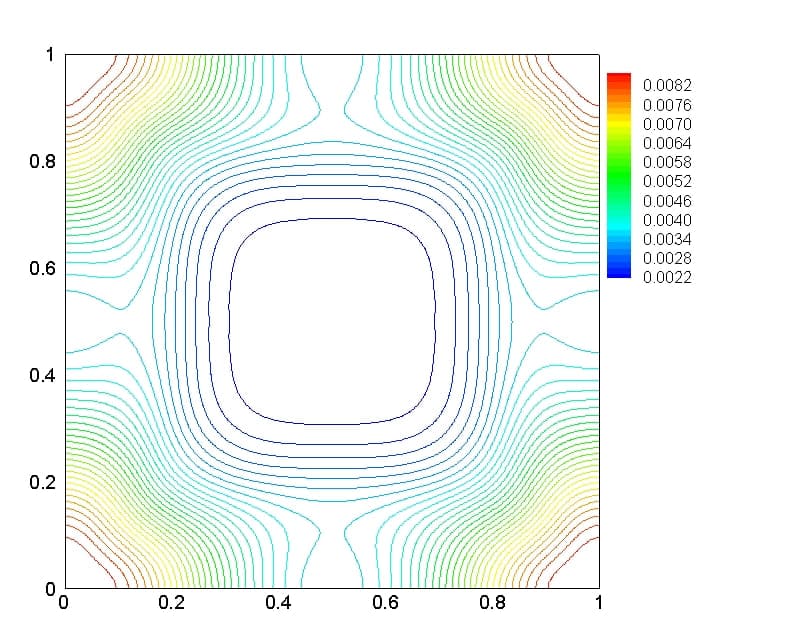}} \\

\centering
{\includegraphics[height=2.1in]{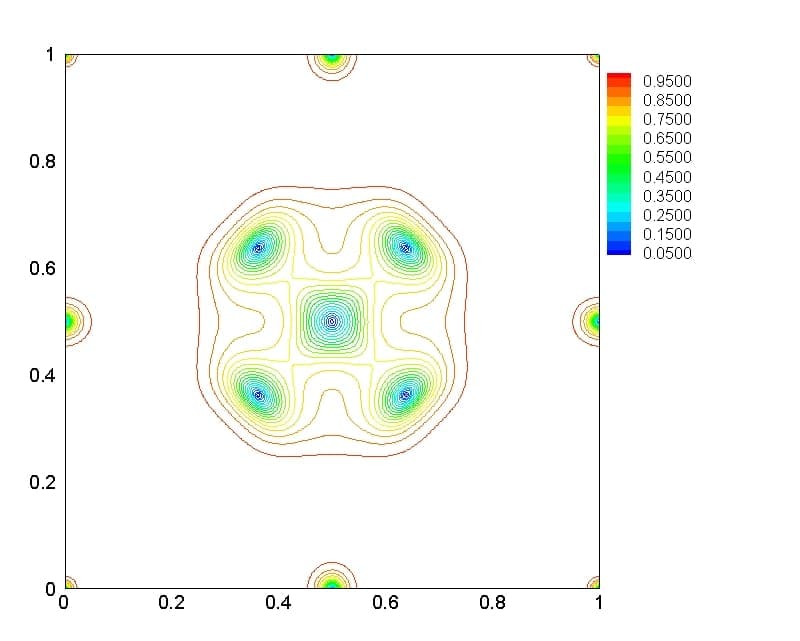}}
 {\includegraphics[height=2.1in]{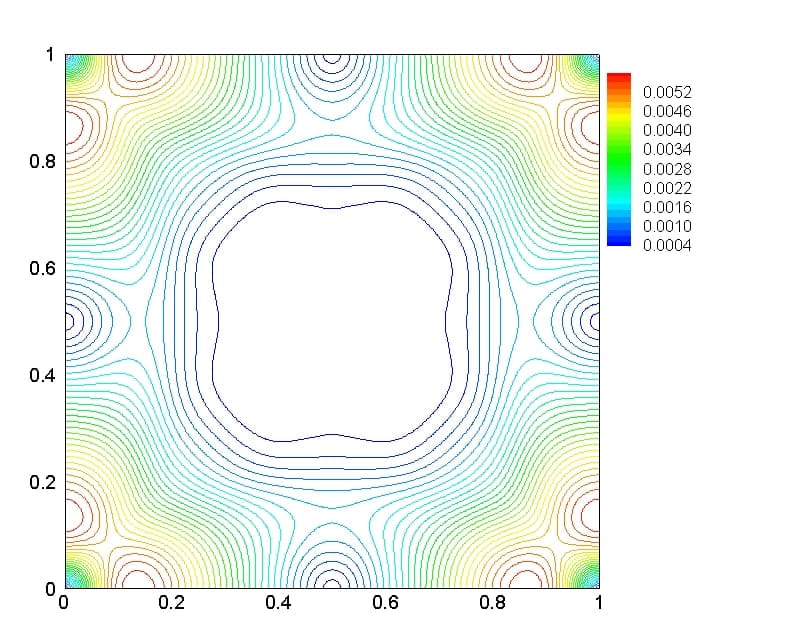}} 
    
\caption{Solutions of the 2-D spatially homogeneous system in a contracting background at $t=-0.5$, $-10^{-1}$, $-10^{-3}$, $-10^{-5}$. Right column: Rescaled solution $\rhot$. Left column: Velocity magnitude $V$.}
\label{a(t)-con}
\end{figure}


\subsection{Flows on an non-homogeneous background in one space dimension}  

\paragraph{A test with variable density data} 
We choose the function $b(x)$ as follows:
\bel{eq:bxb11}
b(x) = 1+ 0.01 \big(\sin(6 \pi x) + \cos(2\pi x)\big),
\ee
 and the initial data to be
\bel{eq: sssv000}
u_0=  0, \qquad \rho_0(x) = b^2(x).
\ee
at $t = -1$.
We take $\kappa = 2$. We can obtain the same result as the homogeneous case  on a contracting  background, that is 
$\rho \to +\infty$ and $u \to \pm 1$ or $0$, as $t \to 0$. We plot the solution $u$ and $\rho$ at $t = -10^{-7}$ with $k = 0.3$, 
see Figure~\ref{Fig: Euler-wb-Con}. Observe that in this case $\rho \to +\infty$ and $u \to \pm 1$, as $t \to 0$. Moreover, in Figure~\ref{Fig: Euler-wb-Con1},
we plot the solution at $t = -10^{-7}$ with $k = 0.9$. We observe that $\rho \to +\infty$ and $u \to 0$.

\begin{figure}[htbp]
\centering
\epsfig{figure = 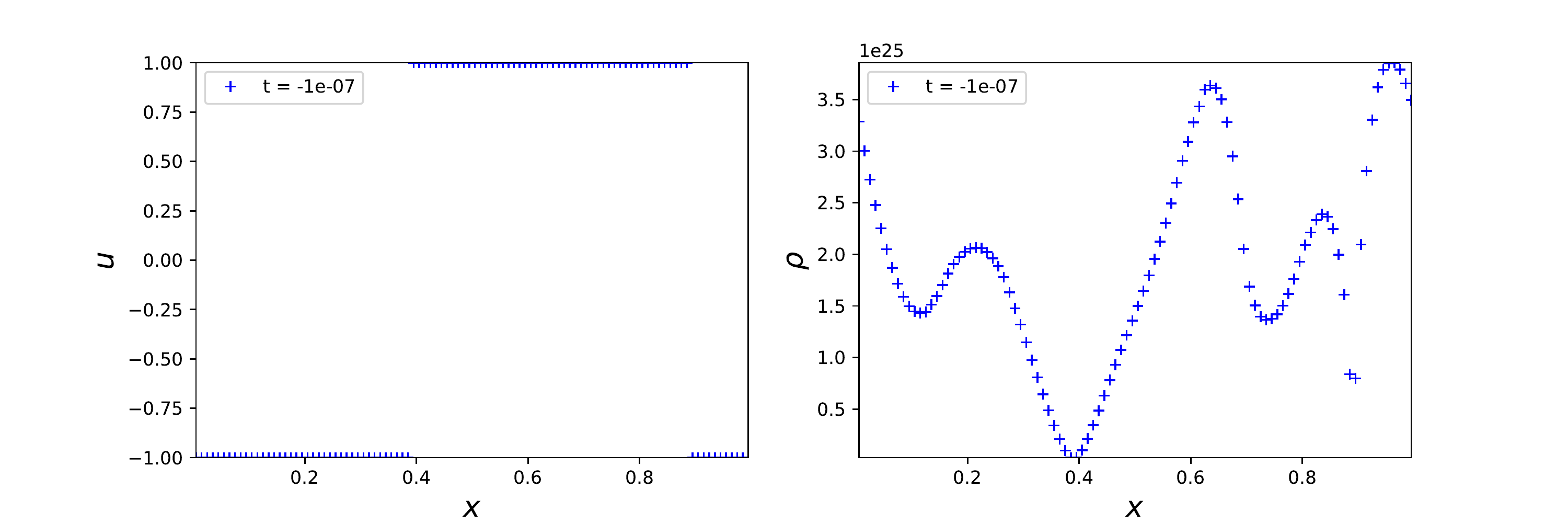,height = 2.1 in}  
\caption{The solution $u$ and $\rho$ on a contracting background with $k = 0.3$.}
\label{Fig: Euler-wb-Con}
\end{figure}

\begin{figure}[htbp]
\centering
\epsfig{figure = 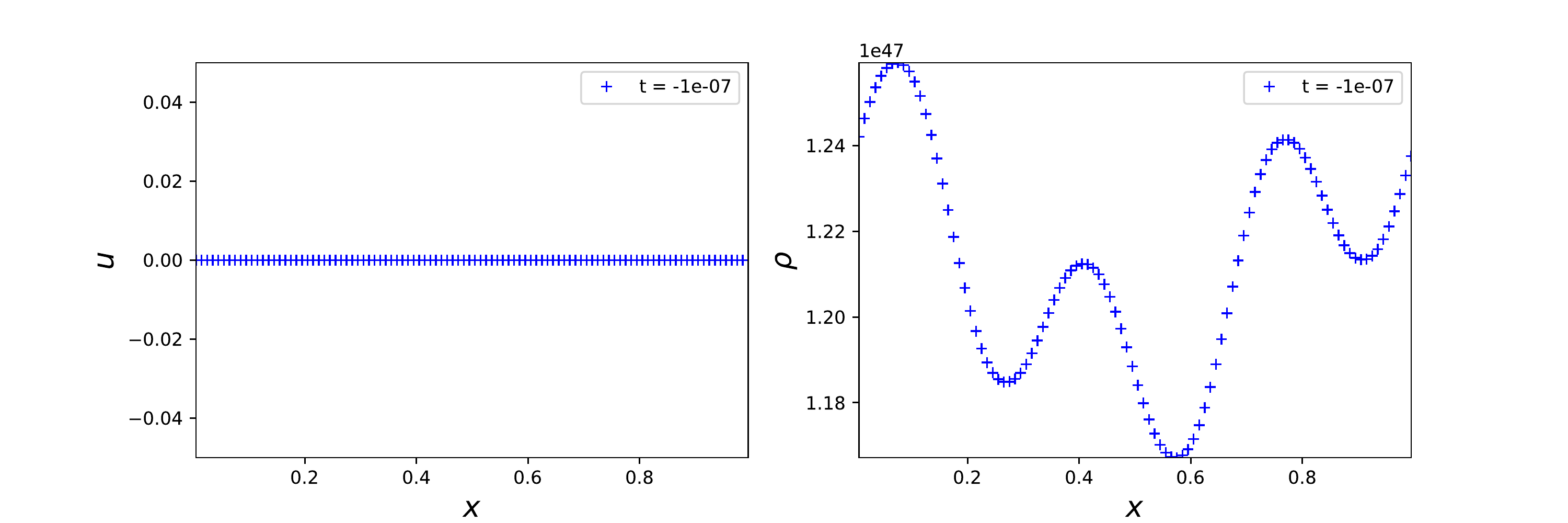,height = 2.1 in}  
\caption{The solution $u$ and $\rho$ on a contracting background with $k = 0.9$.}
\label{Fig: Euler-wb-Con1}
\end{figure}


\paragraph{Rescaling the numerical solution}

We now plot the rescaled solution  $\rhot$ defined in \eqref{Fig: Euler-Res-Con}. We observe that the asymptotic solution $\rhot$ approaches a bounded and stationary limit.

\begin{figure}[htbp]
\centering
\epsfig{figure = 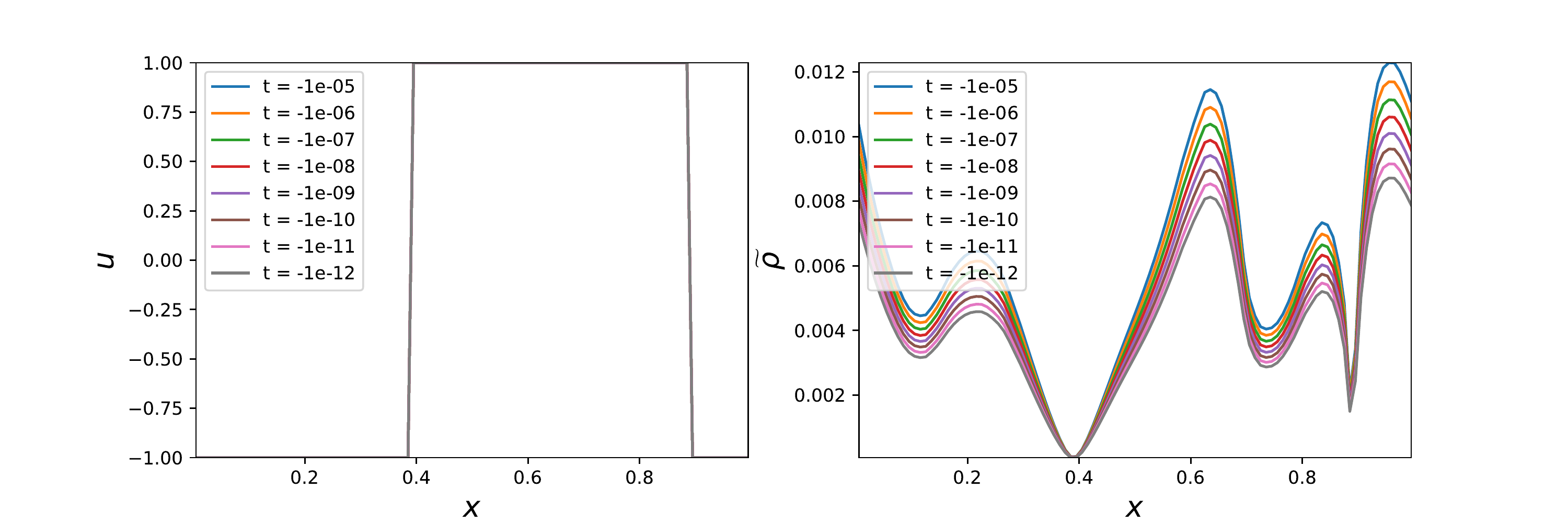,height = 2.1 in}  
\caption{The solution $u$ and rescaled solution $\rhot$ on a contracting background with $k = 0.3$.}
\label{Fig: Euler-Res-Con}
\end{figure}



\subsection{Flows on an non-homogeneous background in two space dimensions} 

\paragraph*{Test 1: Point symmetrical initial data in a contracting background.}

In this test we demonstrate the effect of $a(t) = |t|^\kappa$ when $t \in [-1, 0)$ over the background geometry $b(x,y)$ in \eqref{bxy2d}. The two-dimensional system is solved with the standard HLL scheme and our proposed one. In Figures~\ref{bxy-nwb-con} and~\ref{bxy-wb-con}, we plot the rescaled solution $\rhot$ and velocity magnitude $V$ at $-10^{-1}$, $-10^{-3}$, $-10^{-5}$, $-10^{-8}$, respectively. The results of both schemes demonstrate that the solution $\rho \to +\infty$, and velocity magnitude $V \to 1$ as $t$ increases. 

\begin{figure}[htbp]
\centering
{\includegraphics[height=2.1in]{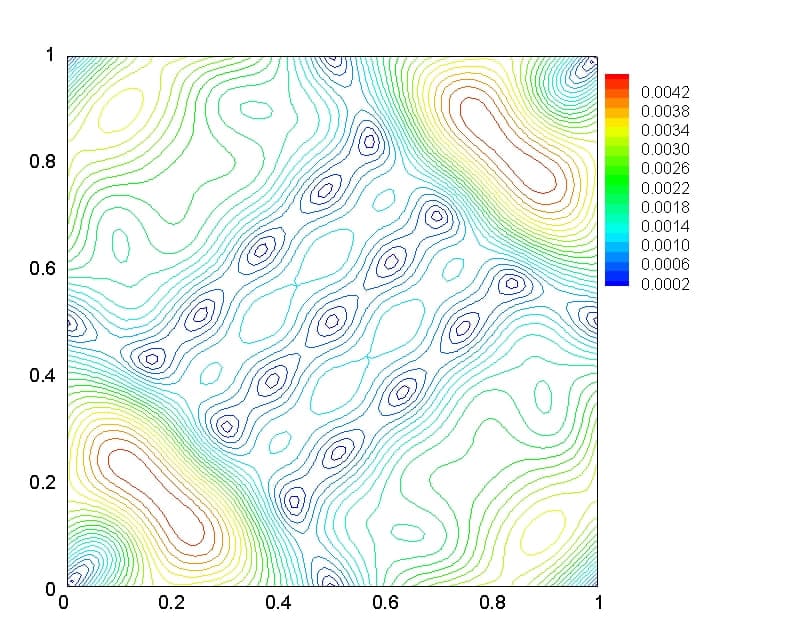}}
 {\includegraphics[height=2.1in]{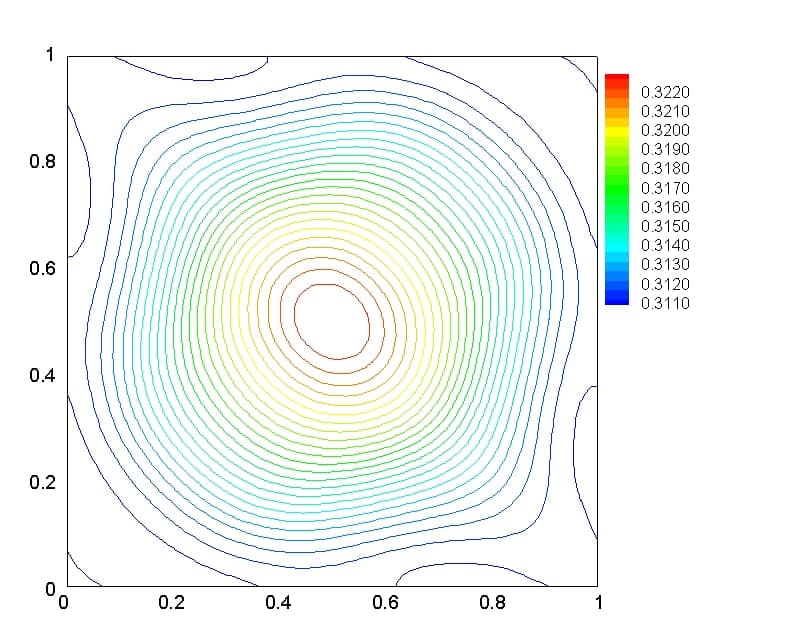}} \\
  
\centering
{\includegraphics[height=2.1in]{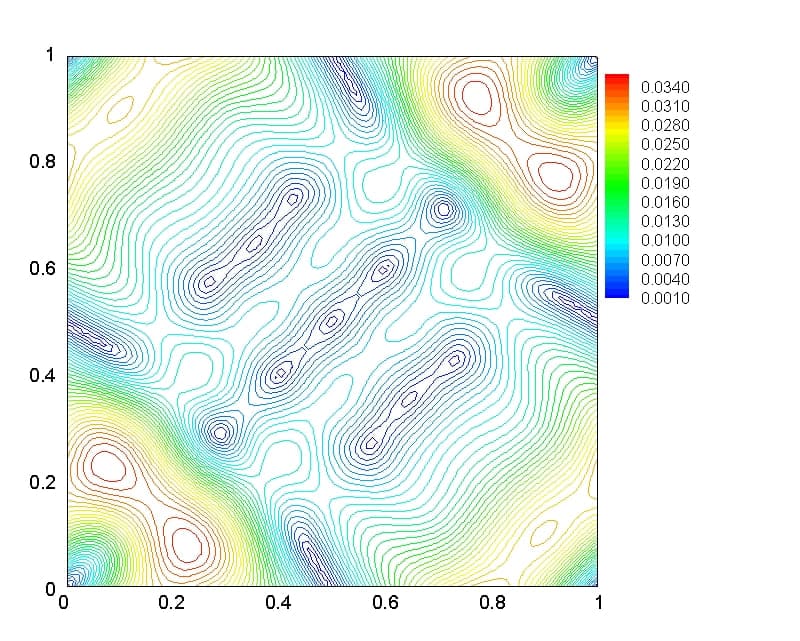}}
 {\includegraphics[height=2.1in]{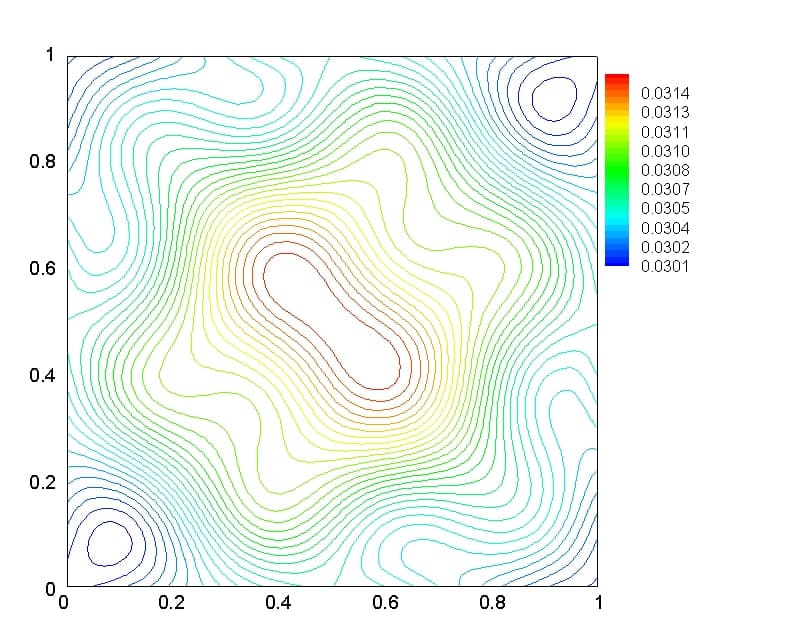}} \\

\centering
{\includegraphics[height=2.1in]{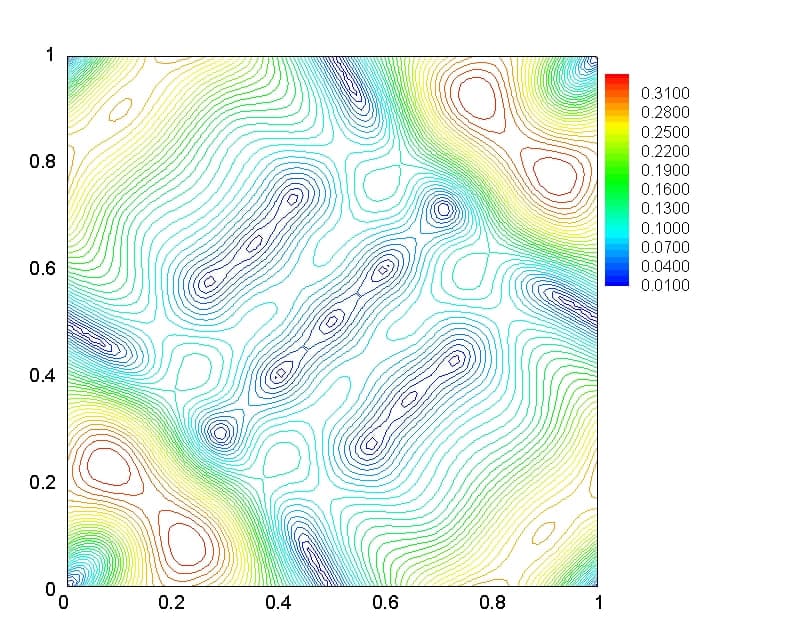}}
 {\includegraphics[height=2.1in]{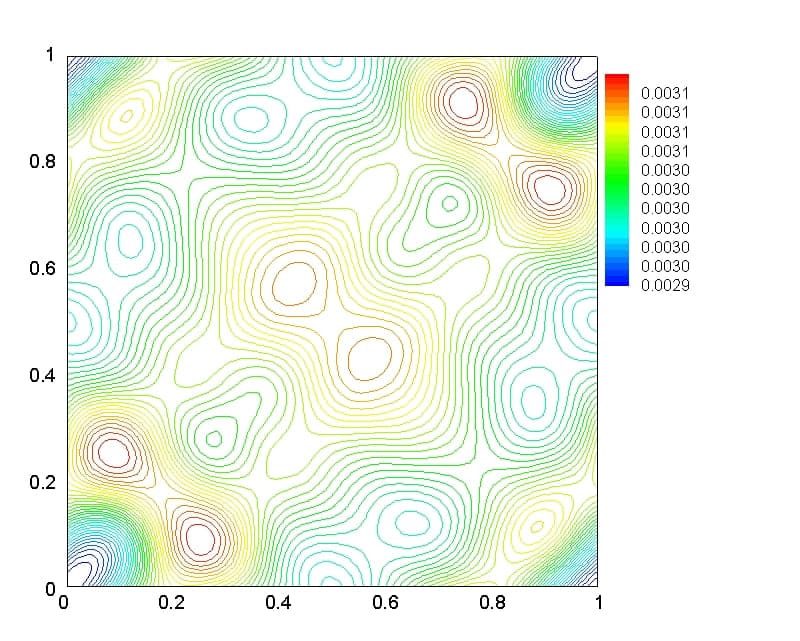}} \\

\centering
{\includegraphics[height=2.1in]{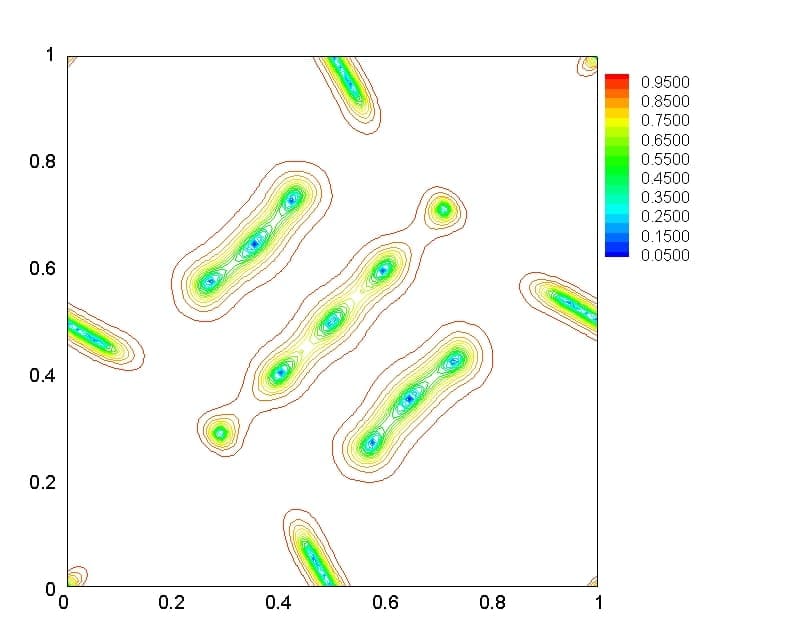}}
 {\includegraphics[height=2.1in]{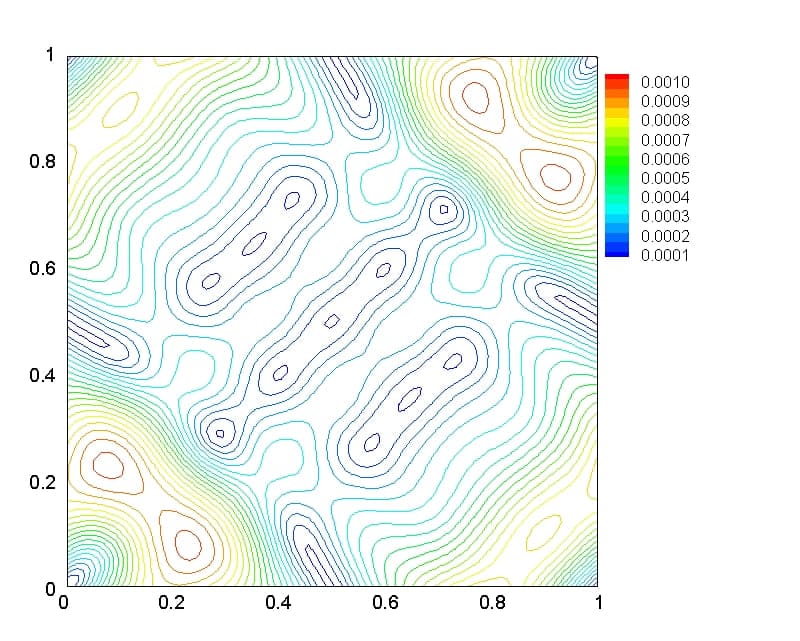}} 
    
\caption{Non-well-balanced solutions of the 2-D system in a contracting  background with the background geometry $b(x,y)$ at $t=-10^{-1}$, $-10^{-3}$, $-10^{-5}$, $-10^{-8}$. Right column: Rescaled solution $\rhot$. Left column: Velocity magnitude $V$.}
\label{bxy-nwb-con}
\end{figure}

\begin{figure}[htbp]
\centering
{\includegraphics[height=2.1in]{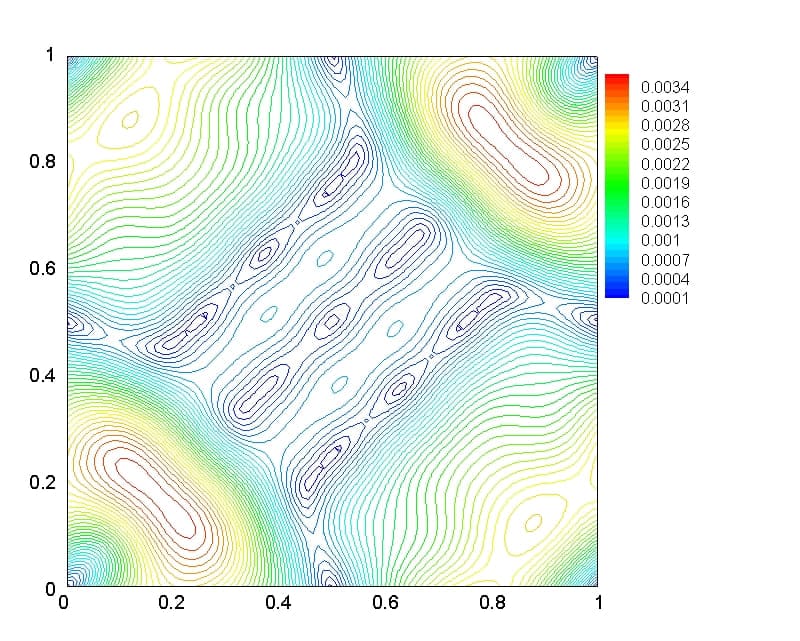}}
 {\includegraphics[height=2.1in]{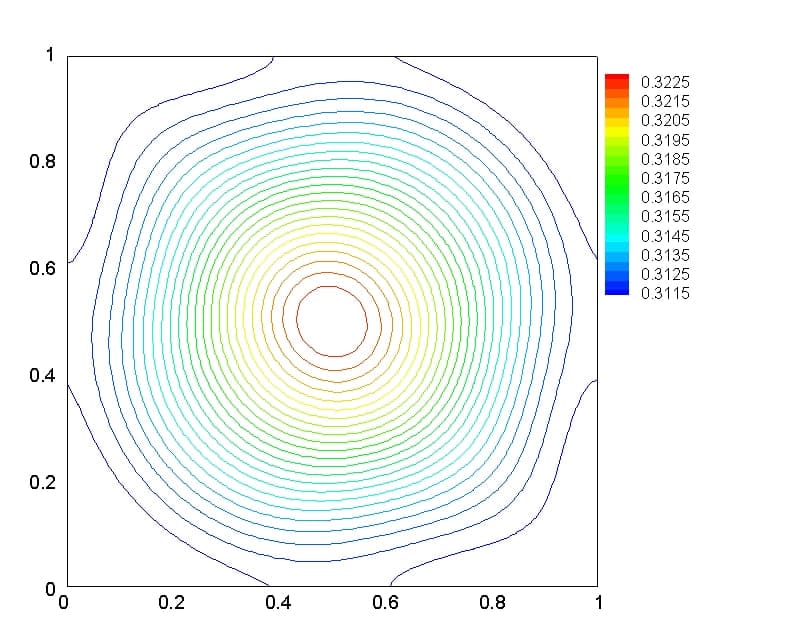}} \\
  
\centering
{\includegraphics[height=2.1in]{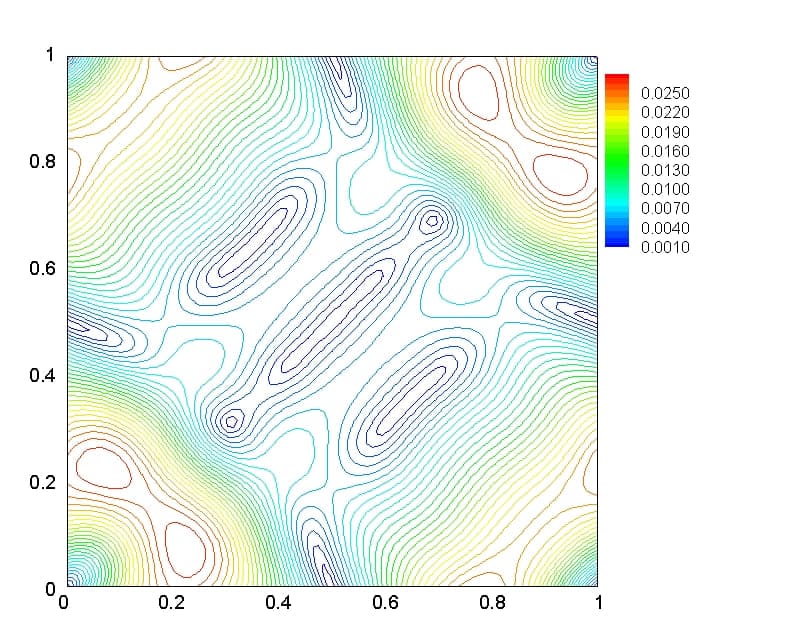}}
 {\includegraphics[height=2.1in]{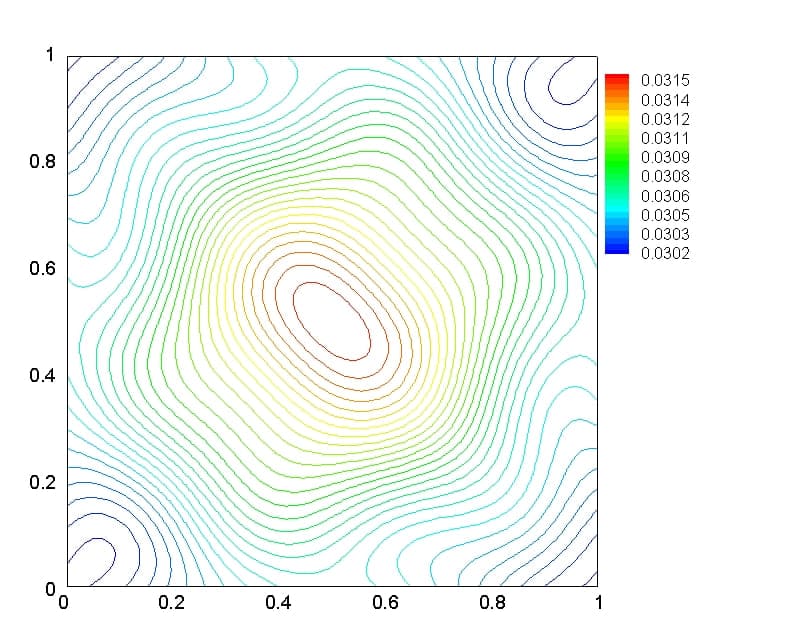}} \\

\centering
{\includegraphics[height=2.1in]{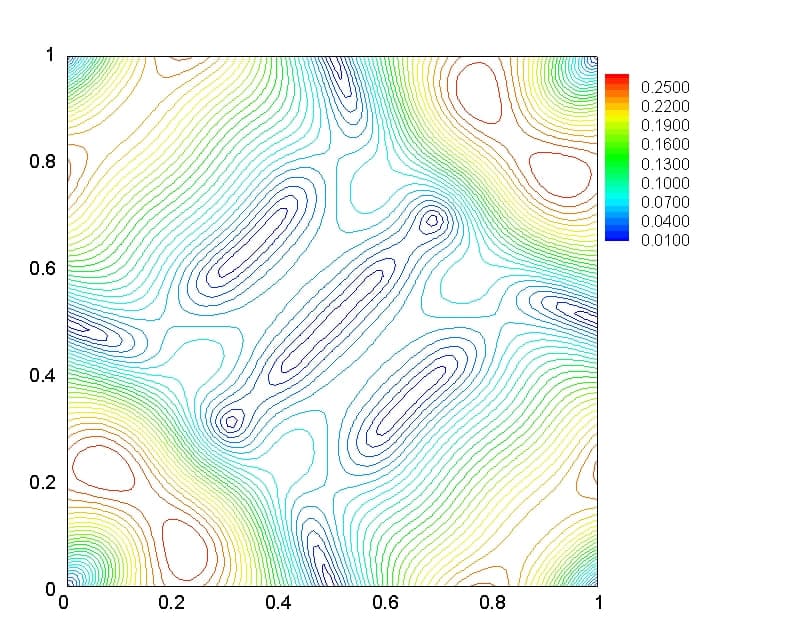}}
 {\includegraphics[height=2.1in]{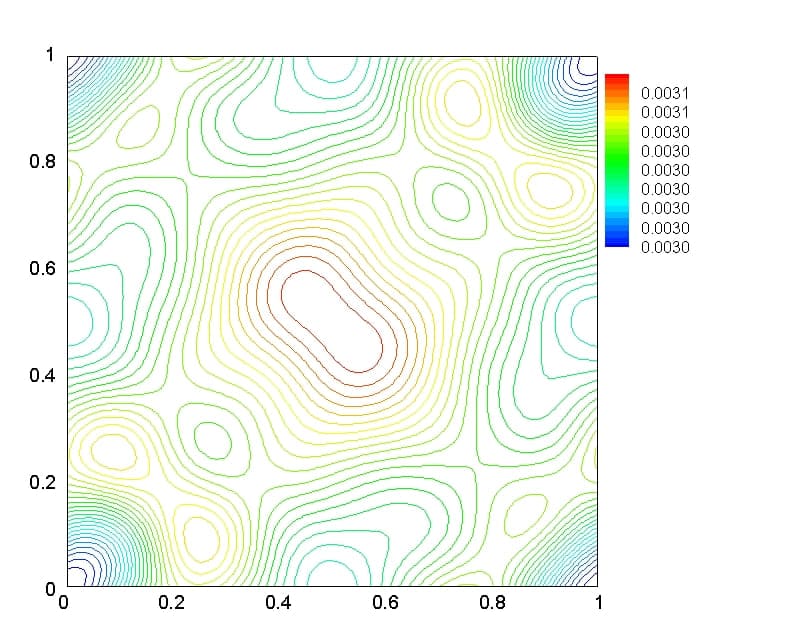}} \\

\centering
{\includegraphics[height=2.1in]{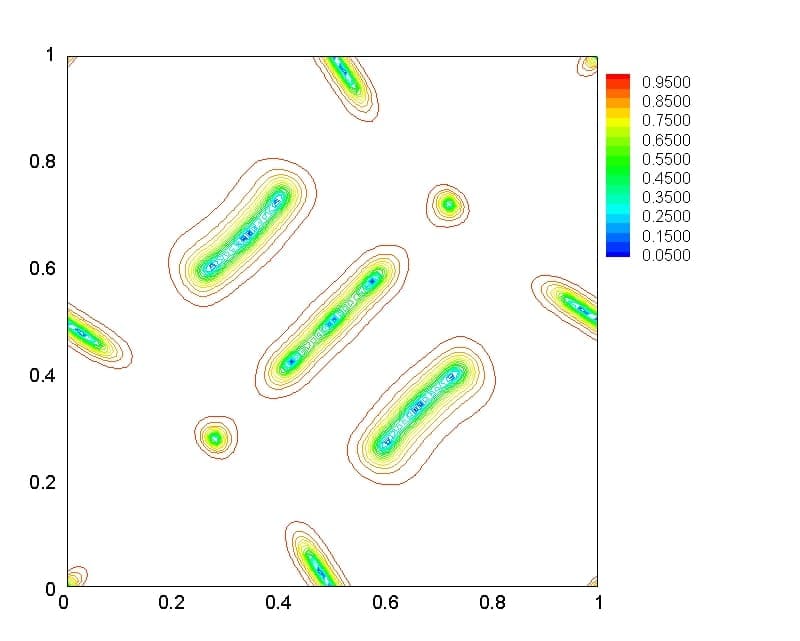}}
 {\includegraphics[height=2.1in]{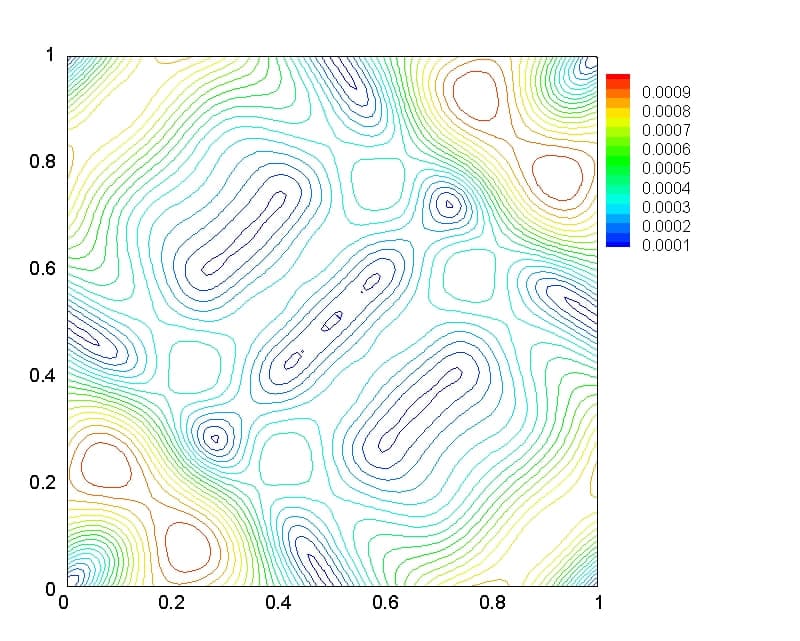}} 
    
\caption{Well-balanced solutions of the 2-D system in a contracting  background with the background geometry $b(x,y)$ at $t=-10^{-1}$, $-10^{-3}$, $-10^{-5}$, $-10^{-8}$. Right column: Rescaled solution $\rhot$. Left column: Velocity magnitude $V$.}
\label{bxy-wb-con}
\end{figure}


\subsection{Conclusion for a contracting background}

Again we are able to ``validate'' the exponents that were derived theoretically. 

\begin{claim}[Compressible fluid flows on a future-contracting cosmological background] 
The asymptotic behavior of solutions to the cosmological fluid model on a future-contracting background is as follows: 
\bei 

\item The density $\rho = \rho(t, x)$ blows up as $t \to 0$ while the velocity approaches zero or the light speed: 
\be
\lim_{t \to 0} \rho(t, x) = +\infty, \qquad 
\lim_{t \to 0} u(t, x) \in \big\{-1, 0, +1 \big\}, 
\qquad x \in [0,1]. 
\ee

\item {\bf Spatially homogeneous and non-homogeneous background.} The rescaled density $\rhot$ defined in \eqref{equa:resc499} approaches a bounded and stationary limit. 

\eei 
\end{claim}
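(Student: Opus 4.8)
\medskip
\noindent\textbf{Proof strategy.} The plan is to convert the statement into a dynamical–systems assertion in a logarithmic time variable and then analyze the resulting asymptotically autonomous balance law, treating first the spatially homogeneous case (where the problem collapses to ordinary differential equations) before perturbing to a general background $b$. Concretely, introduce the slow time $\sigma := -\log|t| \to +\infty$ as $t \to 0^-$ and substitute the ansatz \eqref{equa:resc499}, namely $\rho = |t|^{-\alpha}\rhot$ together with either $u = 1/\eps - |t|^{\beta}\ut$ or $u = |t|^{\gamma}\ut$, into the one–dimensional system \eqref{equa-27ab}. As carried out at the formal level in the heuristics above, the balance between the transport terms and the geometric source $-(\del_t a/a)\rho(\cdots) = -(\kappa/t)\rho(\cdots)$ forces $\alpha = 2\kappa$ in the light–speed regime, and the analogous bookkeeping fixes the remaining exponents. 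After this change of variables the system takes the form $\del_\sigma W + \del_x \mathcal{G}(W) = \mathcal{H}(W,x) + \mathcal{E}(W,x,\sigma)$, where $\mathcal{H}$ collects the $O(1)$ contributions (the geometric amplification term and the $\del_x b/b$ term) and $\mathcal{E}\to 0$ as $\sigma\to+\infty$; the limit system is autonomous in $\sigma$.

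The heart of the argument is then the analysis of this limit system. First, for the \emph{velocity dichotomy}, note that for a spatially homogeneous background $\rhot$ drops out of the velocity equation and $u$ obeys a scalar evolution in $\sigma$ whose only equilibria, by \eqref{equa:eigenv3}, are $u\in\{-1/\eps,0,+1/\eps\}$. Since $t<0$ and hence $\del_t a/a = \kappa/t<0$, the term $-(\del_t a/a)(1+\eps^2 k^2)u$ has the sign of $+u$, so the linearization at $u=0$ is a source while $u=\pm 1/\eps$ are attracting within the physical cone $|u|<1/\eps$; this yields $\lim_{t\to 0}u\in\{-1,0,1\}$ once the light speed is normalized. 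When the sound speed $k$ is close to its maximal value $1/\eps$, the pressure gradient $k^2\rho$ dominates and homogenizes the velocity, selecting the branch $u\to 0$; I would make this precise through an energy/comparison argument for a functional such as $\int \rho\,u^2/(1-\eps^2 u^2)\,dx$. Second, for the \emph{blow-up of the density}, once $u$ is kept below the light speed the continuity equation gives $\del_t(\log\rho)$ comparable to $-\kappa/t>0$, hence $\rho(t,x)\gtrsim |t|^{-c}$ for some $c>0$ and $\rho\to+\infty$. Third, for the \emph{convergence of the rescaled density}, after factoring out $|t|^{-2\kappa}$ the residual equation for $\rhot$ is a transport equation with a strictly dissipative zeroth-order coefficient in $\sigma$; a Gr\"onwall / contraction estimate in $L^1$ (or $BV$) then shows $\rhot(\sigma,\cdot)$ is Cauchy as $\sigma\to\infty$, with a $\sigma$-independent, i.e. stationary, limit whose profile is governed by the steady-state relation \eqref{equilibrium-000} used to build the well-balanced scheme — which is what ties the limit to $b$ in the non-homogeneous case.

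The principal obstacle is the third step — and, beneath it, the very existence and uniform control of weak (shock-containing) solutions up to the singular time. Because the geometric source \emph{amplifies} the solution as $t\to 0$, the usual $BV$ or entropy a priori bounds are not uniform without rescaling; one must carry the estimates in the rescaled variables and simultaneously bound the nonlinear shock interactions, which the numerics indicate eventually cease (leaving a simple profile) but which must be controlled a priori, for instance by a Glimm-type interaction potential adapted to the $\sigma$-damped system. A secondary difficulty is the free boundary at $|u|=1/\eps$: there the two characteristic speeds in \eqref{equa:eigenv3} coalesce and the system degenerates, so the change of variables $u = 1/\eps - |t|^{\beta}\ut$ must be justified together with a bound showing $\ut$ stays finite and strictly positive. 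In two space dimensions the same strategy applies after directional splitting, but the interaction estimates become substantially heavier, and it is likely that only the spatially homogeneous (ODE) reduction and a perturbative argument around it can be made fully rigorous within the present framework.
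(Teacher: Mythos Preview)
Your proposal is not comparable to the paper's treatment because the paper does \emph{not} give an analytical proof of this claim at all. The statement is presented as a conjecture distilled from the numerical experiments of Section~\ref{thesection:6} together with the formal heuristics of Section~2.3: the word ``validate'' is placed in quotation marks, the exponents $\alpha=2\kappa$ etc.\ are obtained by matching leading orders in the ansatz \eqref{equa:resc499}, and the asymptotic behavior of $u$ and $\rhot$ is inferred from the plots (Figures~\ref{fig:Con0}--\ref{Fig: Euler-Res-Con} and the two-dimensional analogues). There is no PDE argument, no a~priori estimate, and no convergence proof in the paper; the ``Claim'' environment is being used in the sense of a numerically supported observation, as in the companion Claim for the expanding case.

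What you have written is therefore an outline of a genuinely new result, not a reconstruction of the paper's reasoning. As an outline it is coherent --- the passage to $\sigma=-\log|t|$, the identification of $u\in\{-1/\eps,0,1/\eps\}$ as equilibria of an asymptotically autonomous velocity equation, and the $|t|^{-2\kappa}$ rescaling are all sound first moves --- but you correctly flag the substantive gaps yourself: uniform $BV$/entropy control of weak solutions up to the singular time in an amplifying regime, the degeneracy of the system at $|u|=1/\eps$ where the eigenvalues coalesce, and any meaningful two-dimensional interaction estimate. None of these is addressed (or claimed) in the paper, and each would require substantial new machinery; in particular, there is at present no existence theory for entropy solutions of \eqref{equa-27ab} with shocks on a contracting background that would let your Gr\"onwall/Glimm step even get off the ground. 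So your strategy is a plausible research program, but it should be understood as going well beyond the paper rather than reproducing anything in it.
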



\noindent{\bf Acknowledgments.} 
The authors were partially supported by the Innovative Training Network (ITN) ModCompShock under the grant 642768. Parts of this paper were written while the third author (PLF) was a visiting fellow at the Courant Institute of Mathematical Sciences at New York University during the Academic year 2018--2019. 


\end{document}